
\documentclass[a4paper,11pt,leqno]{amsart}
\usepackage{amsmath}
\usepackage{amsfonts}
\usepackage{mathtools}
\usepackage{amssymb}
\usepackage{bbm}
\usepackage{amscd}
\usepackage{color}
\usepackage[colorlinks=true,urlcolor=blue,citecolor=blue,linkcolor=blue,linktocpage,pdfpagelabels,bookmarksnumbered,bookmarksopen]{hyperref}
\usepackage[margin=1in]{geometry}
\usepackage{tikz}
\usetikzlibrary{positioning,arrows.meta,shapes.geometric,calc}
\pagestyle{plain}

\theoremstyle{plain}
\numberwithin{equation}{section}
\newtheorem{theorem}{Theorem}[section]
\newtheorem{prop}[theorem]{Proposition}
\newtheorem{lem}[theorem]{Lemma}

\newtheorem{defn}[theorem]{Definition}

\theoremstyle{definition}
\newtheorem{rmk}[theorem]{Remark}

\def\div{\mathop {\rm div}\nolimits}

\newcommand{\R}{\mathbb{R}}

\makeatletter
\def\cleardoublepage{\clearpage\if@twoside \ifodd\c@page\else
\hbox{}
\thispagestyle{empty}
\newpage
\if@twocolumn\hbox{}\newpage\fi\fi\fi}
\makeatother


\pagestyle{plain}

\title{$H$-compactness for nonlocal linear operators in fractional divergence form}

\author[M.\ Caponi]{Maicol Caponi}

\author[A.\ Carbotti]{Alessandro Carbotti}

\author[A.\ Maione]{Alberto Maione}

\address[M.\ Caponi]{Contributing Author\newline\indent Dipartimento di Ingegneria e Scienze dell'Informazione e Matematica \newline\indent Università degli Studi dell'Aquila,\newline\indent Via Vetoio 1, Coppito, 67100 L'Aquila, Italy}
\email{maicol.caponi@univaq.it}

\address[A.\ Carbotti]{Corresponding Author\newline\indent Dipartimento di Matematica e Fisica\newline\indent Università del Salento\newline\indent Via Per Arnesano, 73100 Lecce, Italy}
\email{alessandro.carbotti@unisalento.it}

\address[A.\ Maione]{Contributing Author\newline\indent Centre de Recerca Matemàtica\newline\indent Edifici C, Campus Bellaterra, 08193 Bellaterra, Spain}
\email{amaione@crm.cat}

\keywords{$H$-convergence, $\Gamma$-convergence, fractional operators, Riesz potentials}

\subjclass[2020]{(Primary) 35B40, (Secondary) 35B27, 35R11, 49J45, 74S40}


\begin{document}


\begin{abstract}
We study the $H$-convergence of nonlocal linear operators in fractional divergence form, where the oscillations of the matrices are prescribed outside the reference domain. Our compactness argument bypasses the failure of the classical localisation techniques that mismatch with the nonlocal nature of the operators involved. 
If symmetry is also assumed, we extend the equivalence between the $H$-convergence of the operators and the $\Gamma$-convergence of the associated energies. 
\end{abstract}

\maketitle



\section{Introduction}

In the last decade there has been an increasing interest in extending the $H$-convergence theory for linear operators to a \textit{nonlocal} framework, motivated by the huge number of applications of nonlocal operators in fluid mechanics, image denoising, nonlocal minimal surfaces, nonlinear elasticity, anomalous diffusion, and stable Lévy processes (see e.g.~\cite{bucval16, cdv19} and the references therein). 
First attempts to deal with nonlocal $H$-convergence appear in~\cite{BE,FBRS17}, where the authors study scalar perturbations of the fractional $p$-Laplace operator, and in~\cite{BuaGhoGho19}, regarding the $H$-convergence of fractional powers of elliptic operators in divergence form. We also mention~\cite{BraBruDo,KPZ19,PatVal15} for the homogenisation of more general nonlocal energies.

All the aforementioned contributions in nonlocal $H$-convergence address only scalar weights, 
in contrast to the classical results~\cite{DGS,Spagnolo,Tartar}. 
On the other hand, the case of matrix weights is of great interest in applications, as it allows to study anisotropic heterogeneous materials.
We observe that it is not immediately clear how to get a momentum operator in problems involving Gagliardo seminorms, since the latter are defined via integration of scalar energies.

We hence wonder whether it might be possible to formulate a nonlocal $H$-convergence-type problem in terms of differential operators. 
To this aim, we exploit a suitable notion of fractional divergence $\div^s$ and fractional gradient $\nabla^s$, in a way that the fractional divergence acts on the nonlocal momentum $A(x)\nabla^s u(x)$. 
The construction of this class of fractional-order operators is explained in details in Section~\ref{sec:prel}, and relies on the pioneering contributions by Shieh and Spector~\cite{ShiehSpector,ShiehSpectorII}, \v{S}ilhav\'y~\cite{silhavy}, and the subsequent works~\cite{BCCS22,ComiStefani19,ComiStefani22,ComiStefani23}.
This leads to the study of the $H$-convergence of the following sequence of linear elliptic problems
\begin{equation*}\tag{$P_h^f$}
\begin{cases}
-\div^s(A_h\nabla^s u_h)=f&\text{in }\Omega, \\
u_h=0\quad&\text{in }\R^n\setminus\Omega,
\end{cases}
\end{equation*}
where $(A_h(x))_h$ is a sequence of (not necessarily symmetric) matrices, and $f$ is a given source term, in the same spirit as the works by De Giorgi-Spagnolo~\cite{DGS,Spagnolo} and Murat-Tartar~\cite{Tartar}.

When dealing with the asymptotic behaviour of problems $(P_h^f)$, two main challenges arise. 
The first concerns the localization techniques typically used in the $H$-convergence theory to identify the limit problem (see e.g.~\cite{Tartar}). A key idea to overcome this difficulty, introduced by Kreisbeck and collaborators~\cite{CuKrSc23, KrSc22}, is to use the Riesz potential $I_{1-s}$. This operator allows to reinterpret the (nonlocal) fractional gradient $\nabla^s u$ as the classical (local) gradient $\nabla w$ of the auxiliary variable $w \coloneqq I_{1-s} u$, and to rephrase $(P_h^f)$ in a local framework (see Proposition~\ref{prop:Riesz}). 
This approach was successfully used in~\cite{KrSc22} to address lower semicontinuity and relaxation of functionals involving fractional gradients, and in~\cite{ABSS23} to establish $\Gamma$-convergence results for Ginzburg–Landau-type energies.

In the present paper we apply this technique to the possibly non-variational linear problems $(P_h^f)$. In particular, special care is required when dealing with boundary-value problems, since the Riesz potential does not preserve boundary conditions.
 To overcome this issue, we exploit the compactness properties of the fractional Sobolev space $H^s_0(\Omega)$ and use a suitable \emph{fractional} Leibniz rule (see Subsection~\ref{sec:Riesz}).

The second question concerns the behaviour of the matrices $A_h(x)$ outside $\Omega$. 
Differently from the local scenario, since the fractional gradient $\nabla^s$ must be defined on the whole space $\R^n$, the fractional divergence $\div^s$ acts on vector fields globally defined over $\R^n$ (see Definition~\ref{def:divnablafraz}). 
Hence, the matrix-valued functions $A_h(x)$ must be defined on the whole space $\R^n$. 
On the other hand, $-\div^s(A_h\nabla^su_h)$ is bounded only in $\Omega$, and so we cannot hope to obtain compactness of $(A_h)_h$ outside $\Omega$. 
In order to tackle this lack of compactness, the values of any matrix $A_h(x)$ are prescribed outside $\Omega$ by a fixed matrix $A_0(x)$, which satisfies standard growth conditions. 

In details, we study the compactness, in the sense of the $H$-convergence, of the class
\[
\mathcal M(\lambda,\Lambda,\Omega,A_0)\coloneqq\left\{A\in\mathcal M(\lambda,\Lambda,\mathbb{R}^n):A=A_0\text{ a.e.\ in $\mathbb{R}^n\setminus\Omega$}\right\},
\]
for positive constants $\lambda\le\Lambda$ and $A_0\in\mathcal M(\lambda,\Lambda,\mathbb{R}^n)$, where $\mathcal M(\lambda,\Lambda,\mathbb{R}^n)$ denotes the class of matrix-valued measurable functions $A\colon \mathbb{R}^n\to\R^{n\times n}$ satisfying the growth conditions
\begin{align*}
 &A(x)\xi\cdot \xi\ge \lambda|\xi|^2& &\text{for all $\xi\in\mathbb{R}^n$ and for a.e.\ $x\in \mathbb{R}^n$},\\
 &A(x)\xi\cdot\xi \ge \Lambda^{-1}|A(x)\xi|^2& &\text{for all $\xi\in\R^n$ and for a.e.\ $x\in \mathbb{R}^n$}.
\end{align*}

Our first main result is Theorem~\ref{thm:compactness}, which is the $H$-compactness of the class $\mathcal M(\lambda,\Lambda,\Omega,A_0)$ with respect to a suitable notion of nonlocal $H$-convergence, see Definition~\ref{def:Hscon}. 
More precisely, we prove that any sequence $(A_h)_h$ in $\mathcal M(\lambda,\Lambda,\Omega,A_0)$ admits, up to a subsequence, a limit matrix-valued function $A_\infty$ such that $(A_h)_h$ $H$-converges to $A_\infty$. 
Moreover, $A_\infty$ still belongs to $\mathcal M(\lambda,\Lambda,\Omega,A_0)$, leading to the compactness of the class.
We also show that the $H$-limit $A_\infty$ coincides with its local counterpart on the reference domain $\Omega$, thus revealing a consistency between local and nonlocal $H$-convergence.
As a by-product, in Theorem~\ref{thm:sym-compact} we show that the subclass of symmetric matrices, here denoted by $\mathcal M^{\rm sym}(\lambda,\Lambda,\Omega,A_0)$, is also $H$-compact.
 It would be of great interest to compare this result with the alternative notion of nonlocal $H$-convergence introduced in~\cite{Waurick,Waurick2}, which is based on a functional analytic framework. 

In the second part of this paper, we focus on the symmetric case, and prove Theorem~\ref{thm:Gammacompactness}, which is the $\Gamma$-compactness of the class of nonlocal energies associated with $\mathcal M^{\rm sym}(\lambda,\Lambda,\Omega,A_0)$. 
 
Building on the connection between local and nonlocal settings outlined in~\cite{KrSc22}, the proof of Theorem~\ref{thm:Gammacompactness} follows from the $\Gamma$-compactness of local energies (Proposition~\ref{prop:DalMaso}), and a novel equivalence between the $\Gamma$-convergence of nonlocal energies and that of their corresponding local counterparts (Proposition~\ref{prop:loc-nonloc}).
A central ingredient in the proof of Proposition~\ref{prop:loc-nonloc} is the uniqueness of the integral representation of nonlocal energies in terms of $L^\infty$-coefficients, in complete analogy with the local case, see Lemma~\ref{lem:uniqueness}.

As shown in~\cite{KrSc22} in the context of relaxation problems, and in~\cite{CuKrSc23} for functionals depending on a fractional gradient with finite horizon, the $\Gamma$-compactness of energies depending on fractional gradients is expected to hold for a more general class of functionals, not necessarily of quadratic type. Nevertheless, in this paper we focus on the quadratic case, as our main interest lies in exploring the interplay between $\Gamma$-convergence and $H$-convergence of linear fractional operators.

A very non-exhaustive list of recent references about the $\Gamma$-convergence of fractional quadratic energies is given by~\cite{ABSS23,BraidesDalMaso,Savin,solci}.
In particular, in~\cite{BraidesDalMaso}, the $\Gamma$-compactness is obtained using the Beurling-Deny criteria for Dirichlet forms, which are unfortunately not applicable in our setting, as it is not known whether quadratic forms with the fractional gradient are Dirichlet forms. 

The final part of the paper remains within the symmetric setting.
 In Theorem~\ref{thm:equiv_Gamma-H}, we prove the equivalence between nonlocal $H$-convergence and $\Gamma$-convergence of the associated energies. 
To this end, in Lemma~\ref{lem:equiv_Gamma-G}, we first relate the $\Gamma$-convergence of nonlocal energies to the notion of nonlocal $G$-convergence (Definition~\ref{def:nonlocalG}). 
 We then prove, in Proposition~\ref{prop:convmom}, the convergence of the nonlocal momenta via $\Gamma$-convergence. As a consequence, and in view of Theorem~\ref{thm:Gammacompactness}, we obtain a purely variational alternative proof of Theorem~\ref{thm:sym-compact}. 

In a forthcoming paper, we plan to extend these results to the setting of \textit{monotone operators} with $p$-growth, $p\ge 2$, generalizing the well-known results~\cite{CDMD1990,FMT04,Tartar} to the fractional setting. 
\medskip

\begin{figure}
\begin{center}
\begin{tikzpicture}[node distance=.5cm and 1cm, auto, >=stealth, thick]


 \node (HLoc) [rectangle, draw, rounded corners, fill=gray!20] {Local $H$-convergence};
 \node (HNonloc) [rectangle, draw, rounded corners, fill=gray!20, right=3.5cm of HLoc] {Nonlocal $H$-convergence};
 \node (GammaLoc) [rectangle, draw, rounded corners, fill=gray!20, below=3cm of HLoc] {Local $\Gamma$-convergence};
 \node (GammaNonloc) [rectangle, draw, rounded corners, fill=gray!20, below=3cm of HNonloc] {Nonlocal $\Gamma$-convergence};

 \draw[<->] (GammaNonloc) -- node[right]{Thm~\ref{thm:equiv_Gamma-H}} (HNonloc);
 \draw[<->] (GammaLoc) -- node[above]{} (HLoc);
 \draw[->] (HLoc) -- node[above]{Thm~\ref{thm:sym-compact}} (HNonloc);
 \draw[<->] (GammaLoc) -- node[below]{Prop.~\ref{prop:loc-nonloc}} (GammaNonloc);

 \node (l) at ($(HLoc)!0.5!(GammaNonloc) + (-1.6,-0.2)$) {\scriptsize\textbf{Local}};
 \node (nl1) at ($(HLoc)!0.5!(GammaNonloc) + (1.6,-0.2)$) {\scriptsize\textbf{Nonlocal}};
 \node at ($(l)+(-1,0)$) {\tiny $w \in H^1$};
\node at ($(nl1)+(1.3,0)$) {\tiny $u \in H^s$};
 \node at ($(nl1)!0.5!(l)+(0,1)$) {\tiny $u := (-\Delta)^{\frac{1-s}{2}} w$};
\node at ($(nl1)!0.5!(l)+(0,-1)$) {\tiny $w := I_{1-s} u$};
 \draw[->, bend left=30] (nl1) to (l);
 \draw[->, bend left=30] (l) to (nl1);

\end{tikzpicture}
\end{center}
\caption{The relations between $H$- and $\Gamma$-convergence in the case of symmetric matrices in the local and nonlocal scenarios.}
\end{figure}

The paper is organized as follows. 
In Section~\ref{sec:prel}, we collect some preliminary results.
Section~\ref{sec:H-comp} is devoted to the analysis of nonlocal $H$-convergence. 
 We prove the $H$-compactness result in the general case (Theorem~\ref{thm:compactness}), the uniqueness of the $H$-limit (Lemma~\ref{lem:uniquenessgeneral}), and the $H$-compactness in the symmetric case (Theorem~\ref{thm:sym-compact}). 
In Section~\ref{sec:Gamma-comp} and Section~\ref{sec:H-Gamma-equiv}, we focus on the symmetric setting. 
We prove the $\Gamma$-compactness result (Theorem~\ref{thm:Gammacompactness}), establish the equivalence between $\Gamma$-convergence and $H$-convergence (Theorem~\ref{thm:equiv_Gamma-H}), and provide an alternative variational proof of Theorem~\ref{thm:sym-compact}. 
Finally, in Section~\ref{sec:conclusions}, we outline a number of open problems and future research directions arising from our results.


\section{Preliminaries}\label{sec:prel}


\subsection{Notation}
We assume that $s\in(0,1)$, $n\ge 2$, and that $\Omega$ is a bounded open subset of $\R^n$.
The space of $n\times n$ real matrices is denoted by $\R^{n\times n}$, while $\R^{n\times n}_{\rm sym}$ is the subspace of symmetric matrices.
We adopt standard notation for Lebesgue spaces on measurable subsets $E\subseteq\mathbb R^n$ and Sobolev spaces on open subsets $O\subseteq\mathbb R^n$. We let $\|\cdot\|_X$ denote the norm of a Banach space $X$, $X'$ its dual space, and $\langle \cdot,\cdot\rangle_{X'\times X}$ the duality pairing between $X'$ and $X$.


\subsection{The functional setting}\label{sec:Riesz}
For every $\alpha\in (0,n)$, the $\alpha$-Riesz potential of a measurable function $f\colon\R^n\to\R$ is defined as
\[
I_\alpha f(x)\coloneqq \frac{1}{\gamma_\alpha}\int_{\R^n} \frac{f(y)}{|x-y|^{n-\alpha}}\,{\rm d} y,\qquad\gamma_\alpha\coloneqq 2^\alpha\pi^\frac{n}{2}\frac{\Gamma(\frac{\alpha}{2})}{\Gamma(\frac{n-\alpha}{2})}.
\]
Notice that $I_\alpha$ is a Fourier multiplier that acts in the Fourier space as
\[
\mathcal{F}\left(I_\alpha f\right)(\xi)=|\xi|^{-\alpha}\mathcal{F}\left(f\right)(\xi),
\]
being $\mathcal F$ the Fourier transform. Moreover, as shown in~\cite[Theorem~1, pag.~119]{Stein}, we have the following result.

\begin{prop}\label{prop:Is}
Let $\alpha\in(0,n)$ and $p\in \left(1,\frac{n}{\alpha}\right)$.
For every $f\in L^p(\R^n)$, the $\alpha$-Riesz potential $I_\alpha f$ is well-defined and there exists a positive constant $C$, depending only on $\alpha$, $n$, and $p$, such that
\begin{equation*}
\left\|I_\alpha f\right\|_{L^{p^*_\alpha}(\R^n)}\le C\left\|f\right\|_{L^p(\R^n)},\quad\text{where }p^*_\alpha\coloneqq\frac{np}{n-\alpha p}.
\end{equation*}
\end{prop}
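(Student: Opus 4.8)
The statement is precisely the classical Hardy–Littlewood–Sobolev fractional integration theorem, so the plan is to follow Stein's argument, which splits the kernel $|x-y|^{-(n-\alpha)}$ into a ``near'' and a ``far'' piece and interpolates. First I would reduce to nonnegative $f$ and, by homogeneity of both sides under dilations $f\mapsto f(\delta\,\cdot)$, normalise so that $\|f\|_{L^p(\R^n)}=1$; it then suffices to bound $\|I_\alpha f\|_{L^{p^*_\alpha}}$ by a constant. The core estimate is a pointwise bound: for any $R>0$ write
\[
\gamma_\alpha\, I_\alpha f(x)=\int_{|x-y|\le R}\frac{f(y)}{|x-y|^{n-\alpha}}\,{\rm d}y+\int_{|x-y|> R}\frac{f(y)}{|x-y|^{n-\alpha}}\,{\rm d}y=:A_R(x)+B_R(x).
\]
For the ``far'' term, Hölder's inequality with exponents $p$ and $p'$ gives $B_R(x)\le \|f\|_{L^p}\bigl(\int_{|z|>R}|z|^{-(n-\alpha)p'}\,{\rm d}z\bigr)^{1/p'}$, and the integral converges precisely because $p<n/\alpha$ forces $(n-\alpha)p'>n$; this yields $B_R(x)\le C\,R^{\alpha-n/p}$. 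For the ``near'' term, I would decompose the ball into dyadic annuli $2^{-k-1}R<|x-y|\le 2^{-k}R$, bound $|x-y|^{-(n-\alpha)}$ by $(2^{-k-1}R)^{-(n-\alpha)}$ on each annulus, and recognise the resulting sum of averages as being controlled by the Hardy–Littlewood maximal function $Mf(x)$: one obtains $A_R(x)\le C\,R^{\alpha}\,Mf(x)$, the geometric series converging since $\alpha>0$.

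Combining the two bounds, $\gamma_\alpha\,I_\alpha f(x)\le C\bigl(R^{\alpha}Mf(x)+R^{\alpha-n/p}\bigr)$ for every $R>0$; optimising by choosing $R=(Mf(x))^{-p/n}$ (assuming $0<Mf(x)<\infty$, which holds a.e.) gives the pointwise interpolation inequality
\[
I_\alpha f(x)\le C\,(Mf(x))^{1-\alpha p/n}=C\,(Mf(x))^{p/p^*_\alpha}.
\]
Raising to the power $p^*_\alpha$, integrating over $\R^n$, and invoking the Hardy–Littlewood maximal inequality $\|Mf\|_{L^p}\le C\|f\|_{L^p}$ (valid since $p>1$) yields $\|I_\alpha f\|_{L^{p^*_\alpha}}\le C\|f\|_{L^p}^{p/p^*_\alpha}=C$ under our normalisation, which is the claim; undoing the normalisation by scaling restores the general inequality. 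Finiteness of $I_\alpha f(x)$ for a.e.\ $x$ is a by-product of the finite $L^{p^*_\alpha}$ norm, and the constant $C$ depends only on $\alpha$, $n$, $p$ as the dyadic sums and maximal-function constant do.

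The main obstacle is the sharp bookkeeping in the ``near'' term: one must verify carefully that the dyadic sum of annular averages of $f$ is genuinely dominated by a single copy of $Mf(x)$ (the key point is that the extra factor $2^{-k\alpha}$ from the kernel bound makes the series summable, so no logarithmic loss occurs), and then that the optimisation in $R$ is legitimate at the points where $Mf(x)$ vanishes or is infinite — both of which form a null set, so they do not affect the $L^{p^*_\alpha}$ integral. Everything else is routine; since this is exactly~\cite[Theorem~1, p.~119]{Stein}, in the paper itself it is legitimate simply to cite that reference rather than reproduce the argument.
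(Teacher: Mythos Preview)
Your proof plan is correct and is precisely Stein's classical argument; the paper itself does not give a proof at all but simply cites~\cite[Theorem~1, p.~119]{Stein}, exactly as you anticipated in your final sentence.
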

Let us recall the notions of fractional divergence and fractional gradient that will be used to introduce the $H$-convergence problem in our framework.
\begin{defn}\label{def:divnablafraz}
Let $\psi\in C_c^\infty(\R^n)$ and $\phi\in C^\infty_c(\R^n;\R^n)$ be fixed. We define the $s$-fractional gradient $\nabla^s\psi$ and the $s$-fractional divergence $\div^s\phi$, respectively, as
\begin{align*}
\nabla^s\psi(x)&\coloneqq \frac{n-1+s}{\gamma_{1-s}}\int_{\mathbb{R}^n}\frac{(\psi(x)-\psi(y))(x-y)}{|x-y|^{n+s+1}}\,{\rm d}y\quad\text{for all $x\in\R^n$},\\
\div^s\phi(x)&\coloneqq \frac{n-1+s}{\gamma_{1-s}}\int_{\mathbb{R}^n}\frac{(\phi(x)-\phi(y))\cdot(x-y)}{|x-y|^{n+s+1}}\,{\rm d}y\quad\text{for all $x\in\R^n$}.
\end{align*}
\end{defn}
The next result links the notions of fractional gradient and fractional divergence with the classical ones.
A proof can be found in~\cite[Theorem~1.2]{ShiehSpector} and~\cite[Proposition~2.2]{ComiStefani19}.
\begin{prop}\label{prop:identities}
Let $\psi\in C_c^\infty(\R^n)$ and $\phi\in C^\infty_c(\R^n;\R^n)$. It holds that
\begin{align*}
\nabla^s\psi&=\nabla(I_{1-s}\psi)=I_{1-s}(\nabla\psi)\quad\text{in $\R^n$},\\
\div^s\phi&=I_{1-s}(\div\phi)=\div(I_{1-s}\phi)\quad\text{in $\R^n$}.
\end{align*}
\end{prop}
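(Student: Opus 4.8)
The idea is to prove each identity by passing to the Fourier side, exploiting the fact that $I_{1-s}$, $\nabla^s$, $\div^s$, $\nabla$ and $\div$ are all Fourier multipliers when applied to Schwartz functions, and that $C_c^\infty(\R^n) \subset \mathcal{S}(\R^n)$. First I would compute the Fourier symbol of $\nabla^s$. Starting from Definition~\ref{def:divnablafraz}, I would rewrite
\[
\nabla^s\psi(x) = \frac{n-1+s}{\gamma_{1-s}} \int_{\R^n} \frac{(\psi(x)-\psi(x-z))\, z}{|z|^{n+s+1}}\,{\rm d}z
\]
after the change of variables $z = x-y$, recognise this as a convolution-type singular integral, and take the Fourier transform. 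Using that the kernel $z \mapsto z/|z|^{n+s+1}$ is (a constant multiple of) the gradient of the Riesz kernel $|z|^{-(n-1+s)} \cdot (1/\gamma_{1-s})$-type object, one finds $\mathcal{F}(\nabla^s\psi)(\xi) = (2\pi i\xi)\,|\xi|^{-(1-s)}\,\mathcal F(\psi)(\xi)$ (with whatever normalisation of $\mathcal F$ the paper uses; the constant $\frac{n-1+s}{\gamma_{1-s}}$ is exactly what makes the symbol come out this clean — this is the content of~\cite[Theorem~1.2]{ShiehSpector}). Comparing with the symbol $|\xi|^{-(1-s)}$ of $I_{1-s}$ and the symbol $2\pi i\xi$ of $\nabla$, we read off $\mathcal F(\nabla^s\psi) = \mathcal F(\nabla I_{1-s}\psi) = \mathcal F(I_{1-s}\nabla\psi)$, and since the Fourier transform is injective on $\mathcal S$, the first line of identities follows.

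For the second line I would either repeat the same computation for $\div^s$ — whose symbol is $2\pi i\xi \cdot (\,\cdot\,)\,|\xi|^{-(1-s)}$, giving $\mathcal F(\div^s\phi)(\xi) = 2\pi i \xi\cdot\mathcal F(\phi)(\xi)\,|\xi|^{-(1-s)}$ — or observe that $\div^s\phi = \sum_j \nabla^s_j \phi_j$ componentwise (comparing the two formulas in Definition~\ref{def:divnablafraz}, the $j$-th summand of $\div^s\phi$ is exactly the $j$-th component of $\nabla^s$ applied to the scalar function $\phi_j$), so that $\div^s\phi = \sum_j (\nabla^s\phi_j)_j$, and then apply the first line of identities coordinatewise to get $\div^s\phi = \sum_j \partial_j(I_{1-s}\phi_j) = \div(I_{1-s}\phi)$ and $\div^s\phi = \sum_j I_{1-s}(\partial_j\phi_j) = I_{1-s}(\div\phi)$, using linearity of $I_{1-s}$ and the fact that it commutes with coordinate projections. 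The commutation $\nabla I_{1-s}\psi = I_{1-s}\nabla\psi$ on $\mathcal S$ is immediate from the multiplier picture since the symbols $2\pi i\xi$ and $|\xi|^{-(1-s)}$ commute pointwise.

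The one point requiring care — and the main (minor) obstacle — is the justification of the Fourier-transform manipulations: one must check that $\nabla^s\psi$, $\div^s\phi$, and $I_{1-s}\psi$ are genuinely tempered distributions (indeed $L^p$ functions, e.g.\ via Proposition~\ref{prop:Is} with $\alpha = 1-s$, noting $1-s \in (0,n)$ and choosing $p$ with $1 < p < n/(1-s)$), so that taking $\mathcal F$ is legitimate and the multiplier identities hold as identities of tempered distributions; since all objects involved are in fact smooth and decaying, the resulting equalities then hold pointwise in $\R^n$. Alternatively, and perhaps more transparently, one can argue directly in physical space: the identity $\nabla^s\psi = I_{1-s}(\nabla\psi)$ follows by differentiating under the integral sign in $I_{1-s}(\nabla\psi)(x) = \frac{1}{\gamma_{1-s}}\int \frac{\nabla\psi(y)}{|x-y|^{n-(1-s)}}\,{\rm d}y$ and integrating by parts (using the rapid decay of $\psi$ and the local integrability of the Riesz kernel), which directly produces the principal-value singular integral in Definition~\ref{def:divnablafraz}; the identity $\nabla^s\psi = \nabla(I_{1-s}\psi)$ then follows by differentiating $I_{1-s}\psi$ under the integral sign. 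Either way, one simply invokes~\cite[Theorem~1.2]{ShiehSpector} and~\cite[Proposition~2.2]{ComiStefani19} for the full details. The $\div^s$ statements reduce to the $\nabla^s$ statements exactly as described above.
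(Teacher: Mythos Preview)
The paper does not provide its own proof of this proposition; it simply cites \cite[Theorem~1.2]{ShiehSpector} and \cite[Proposition~2.2]{ComiStefani19}. Your sketch is correct and in fact goes further than the paper does: the Fourier-multiplier argument you outline (and the alternative physical-space integration-by-parts argument) is precisely what those references contain, and your reduction of the $\div^s$ identities to the $\nabla^s$ identities via the componentwise observation $\div^s\phi = \sum_j (\nabla^s\phi_j)_j$ is clean and correct. Since you end by invoking the same two references for the full details, your proposal subsumes the paper's treatment.
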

We now introduce the functional framework for our problems in fractional divergence form.
\begin{defn}\label{def:sobolev_spaces}
Let $O\subseteq\R^n$ be an open set. We let $H_0^s(O)$ denote the Hilbert space defined as the closure of $C_c^\infty(O)$ with respect to the following norm
\[
\left\|\,\cdot\,\right\|_{H^s(\R^n)}\coloneqq\left(\left\|\,\cdot\,\right\|^2_{L^2(\R^n)}+\left\|\nabla^s\,\cdot\,\right\|^2_{L^2(\R^n;\R^n)}\right)^{1/2}
\]
and, by $H^{-s}(O)$, the dual space of $H^s_0(O)$. When $O=\R^n$, we simply write $H^s_0(\R^n)=H^s(\R^n)$.
\end{defn}
As a consequence of Proposition~\ref{prop:identities}, one can immediately show, through Fourier transform, that for all $\psi\in C_c^\infty(\R^n)$ and $s,\sigma\in(0,1)$ we have
\begin{equation}\label{eq:composition}
-\div^s(\nabla^{\sigma}\psi)=(-\Delta)^{\frac{s+\sigma}{2}}\psi.
\end{equation}
Moreover, by the Fubini's Theorem, it holds that
\begin{equation}\label{eq:duality}
\int_{\R^n}\nabla^s\psi(x)\cdot\phi(x)\,{\rm d}x=-\int_{\R^n}\psi(x)\div^s\phi(x)\,{\rm d} x
\end{equation}
for all $\psi\in C_c^\infty(\R^n)$ and $\phi\in C^\infty_c(\R^n;\R^n)$.

If we extend the operators $\nabla^s$ and $\div^s$ respectively to $H^s(\R^n)$ and $H^s(\R^n;\R^n)$, then~\eqref{eq:duality} holds for all $\psi\in H^s(\R^n)$ and $\phi\in H^s(\R^n;\R^n)$.
In particular, we can define the operator $\div^s\colon L^2(\R^n;\R^n)\to H^{-s}(\Omega)$ as
\[
\langle \div^s u,v\rangle_{H^{-s}(\Omega)\times H^s_0(\Omega)}\coloneqq -\int_{\R^n}u(x)\cdot \nabla^s v(x)\,{\rm d}x
\]
for all $u\in L^2(\R^n;\R^n)$ and $v\in H^s_0(\Omega)$.

The following two propositions provide a useful connection between the Sobolev spaces $H^1(\R^n)$ and $H^s(\R^n)$. 
For the proof of the next proposition we refer the interested reader to {\cite[Lemma~A.4]{BCCS22}}.
\begin{prop}\label{prop:fr-Lap}
Let $w\in H^1(\R^n)$ and set $u\coloneqq (-\Delta)^{\frac{1-s}{2}}w$.
It holds that 
\begin{align*}
 u\in H^s(\R^n)\quad\text{and}\quad\nabla^s u=\nabla w\text{ a.e.\ in $\R^n$}.
\end{align*}
Moreover, there exists a positive constant $C$, depending only on $n$ and $s$, such that the following estimate holds true
\begin{equation}\label{eq:interpolationBCCS}
\|u\|_{L^2(\R^n)}\le C(n,s)\|w\|_{L^2(\R^n)}^s\|\nabla w\|_{L^2(\R^n;\R^n)}^{1-s}.
\end{equation}
\end{prop}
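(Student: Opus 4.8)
The plan is to reduce everything to Fourier multiplier computations, since all the operators at play are Fourier multipliers. First I would record the relevant symbols: $I_{1-s}$ acts as multiplication by $|\xi|^{-(1-s)}$, hence $(-\Delta)^{\frac{1-s}{2}}$ acts as multiplication by $|\xi|^{1-s}$, and, differentiating the identity $\nabla^s\psi=\nabla(I_{1-s}\psi)$ of Proposition~\ref{prop:identities}, one obtains $\mathcal F(\nabla^s\psi)(\xi)=i\xi|\xi|^{s-1}\mathcal F(\psi)(\xi)$ for $\psi\in C_c^\infty(\R^n)$. Combining the last identity with Plancherel's theorem shows that on $C_c^\infty(\R^n)$ the norm $\|\cdot\|_{H^s(\R^n)}$ is equivalent to the Bessel norm $\big(\int_{\R^n}(1+|\xi|^2)^s|\mathcal F(\cdot)(\xi)|^2\,\mathrm d\xi\big)^{1/2}$; a density argument then identifies $H^s(\R^n)$ with the Bessel potential space of order $s$ and shows that $\nabla^s\colon H^s(\R^n)\to L^2(\R^n;\R^n)$ is the bounded Fourier multiplier with symbol $i\xi|\xi|^{s-1}$. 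Alternatively, one may simply invoke the corresponding statements in~\cite{ComiStefani19,ShiehSpector}. I expect this identification to be the only genuinely delicate step; everything afterwards is a routine computation on the Fourier side.

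Granting this, $u$ is the tempered distribution with $\mathcal F(u)(\xi)=|\xi|^{1-s}\mathcal F(w)(\xi)$, and I would first check that $u\in H^s(\R^n)$: since $|\xi|^{2(1-s)}(1+|\xi|^2)^s\le 1+|\xi|^2$,
\[
\int_{\R^n}(1+|\xi|^2)^s|\mathcal F(u)(\xi)|^2\,\mathrm d\xi=\int_{\R^n}(1+|\xi|^2)^s|\xi|^{2(1-s)}|\mathcal F(w)(\xi)|^2\,\mathrm d\xi\le c\,\|w\|_{H^1(\R^n)}^2<\infty .
\]
For the identity $\nabla^s u=\nabla w$ I would then use the multiplier description above and compute
\[
\mathcal F(\nabla^s u)(\xi)=i\xi|\xi|^{s-1}\mathcal F(u)(\xi)=i\xi|\xi|^{s-1}|\xi|^{1-s}\mathcal F(w)(\xi)=i\xi\,\mathcal F(w)(\xi)=\mathcal F(\nabla w)(\xi)
\]
for a.e.\ $\xi\in\R^n$; since $w\in H^1(\R^n)$ forces $\nabla w\in L^2(\R^n;\R^n)$, injectivity of $\mathcal F$ yields $\nabla^s u=\nabla w$ a.e.\ in $\R^n$.

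Finally, the interpolation estimate~\eqref{eq:interpolationBCCS} comes from Plancherel's theorem together with Hölder's inequality with the conjugate exponents $\tfrac1s$ and $\tfrac1{1-s}$: up to the normalisation constant of $\mathcal F$,
\[
\|u\|_{L^2(\R^n)}^2=\int_{\R^n}|\xi|^{2(1-s)}|\mathcal F(w)(\xi)|^2\,\mathrm d\xi=\int_{\R^n}\big(|\mathcal F(w)(\xi)|^2\big)^{s}\big(|\xi|^{2}|\mathcal F(w)(\xi)|^{2}\big)^{1-s}\,\mathrm d\xi\le\|w\|_{L^2(\R^n)}^{2s}\,\|\nabla w\|_{L^2(\R^n;\R^n)}^{2(1-s)},
\]
and taking square roots gives~\eqref{eq:interpolationBCCS}, the constant $C(n,s)$ only accounting for the Fourier normalisation. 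As anticipated, the main obstacle is none of these computations but the bookkeeping in the first paragraph: reconciling the closure definition of $H^s(\R^n)$ with its Fourier-analytic description and extending the multiplier identity for $\nabla^s$ from $C_c^\infty(\R^n)$ to all of $H^s(\R^n)$.
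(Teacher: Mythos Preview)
Your proof is correct. The paper does not supply its own argument for this proposition but simply refers to~\cite[Lemma~A.4]{BCCS22}; your Fourier-multiplier approach is the natural one and almost certainly matches what is done there, so there is nothing to compare.
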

\begin{prop}\label{prop:Riesz}
Let $u\in H^s(\R^n)$ and set $w\coloneqq I_{1-s}u$.
It holds that 
\[
w\in \left\{v\in L^{2^*_{1-s}}(\R^n)\,:\, \nabla v\in L^2(\R^n)\right\}\quad\text{and}\quad\nabla w=\nabla^s u\text{ a.e.\ in $\R^n$},
\]
where $2^*_{1-s}=\frac{2n}{n-2+2s}$.

In particular, $w\in H^1_{\rm loc}(\R^n)$ and, for every open set $O\Subset\R^n$, there exists a positive constant $C$, depending only on $n$, $s$ and $O$, such that the following estimate holds true
\begin{equation}\label{eq:continuityestriesz}
\|w\|_{H^1(O)}\le C(n,s)\|u\|_{H^s(\R^n)}.
\end{equation}
\end{prop}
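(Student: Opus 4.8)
The plan is to establish the three claims in order: (i) $\nabla w = \nabla^s u$ a.e.\ in $\R^n$, together with $\nabla w \in L^2(\R^n;\R^n)$ and $w \in L^{2^*_{1-s}}(\R^n)$; (ii) the local regularity $w \in H^1_{\rm loc}(\R^n)$; and (iii) the quantitative estimate~\eqref{eq:continuityestriesz}. First I would prove the identities and integrability on the dense subspace $C_c^\infty(\R^n)$ and then pass to the limit. For $\psi \in C_c^\infty(\R^n)$, Proposition~\ref{prop:identities} gives $\nabla^s \psi = \nabla(I_{1-s}\psi)$, so with $w = I_{1-s}\psi$ we have exactly $\nabla w = \nabla^s \psi$ pointwise. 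Since $\nabla\psi \in L^p$ for every $p$, the identity $\nabla^s\psi = I_{1-s}(\nabla\psi)$ and Proposition~\ref{prop:Is} (applied componentwise with $\alpha = 1-s$, $p=2$, which is admissible because $2 < n/(1-s)$ as $n \ge 2 > 2(1-s)$) yield $\|\nabla w\|_{L^2(\R^n;\R^n)} = \|\nabla^s\psi\|_{L^2} \le C\|\nabla\psi\|_{L^{p}}$; more usefully, combining $\nabla w = \nabla^s\psi$ with the definition of the $H^s$-norm gives $\|\nabla w\|_{L^2(\R^n;\R^n)} \le \|\psi\|_{H^s(\R^n)}$. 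Likewise, applying Proposition~\ref{prop:Is} directly to $w = I_{1-s}\psi$ with $p=2$ gives $\|w\|_{L^{2^*_{1-s}}(\R^n)} \le C\|\psi\|_{L^2(\R^n)} \le C\|\psi\|_{H^s(\R^n)}$, where $2^*_{1-s} = 2n/(n-2(1-s)) = 2n/(n-2+2s)$, matching the stated exponent.

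Next I would pass to a general $u \in H^s(\R^n)$. Take $\psi_k \in C_c^\infty(\R^n)$ with $\psi_k \to u$ in $H^s(\R^n)$; set $w_k := I_{1-s}\psi_k$. By the two bounds above, $(w_k)_k$ is Cauchy in $L^{2^*_{1-s}}(\R^n)$ and $(\nabla w_k)_k$ is Cauchy in $L^2(\R^n;\R^n)$, so $w_k \to \tilde w$ in $L^{2^*_{1-s}}(\R^n)$ and $\nabla w_k \to g$ in $L^2$ for some $\tilde w, g$; since $\nabla w_k = \nabla^s\psi_k \to \nabla^s u$ in $L^2$ by continuity of $\nabla^s$ on $H^s(\R^n)$, we get $g = \nabla^s u$. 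A standard distributional argument (testing against $C_c^\infty$ vector fields and using $L^{2^*_{1-s}}_{\rm loc} \hookrightarrow L^1_{\rm loc}$ for the convergence $w_k \to \tilde w$) identifies $\nabla \tilde w = g = \nabla^s u$ in the sense of distributions, and I must check that $\tilde w = I_{1-s}u$: this follows because $I_{1-s}$ is continuous from $L^{2^*_s{}}$-type spaces, or more directly because $I_{1-s}$ maps $H^s(\R^n) \to L^{2^*_{1-s}}(\R^n)$ continuously (again Proposition~\ref{prop:Is} with $p=2$, now applied to $u$ itself, noting $H^s(\R^n) \hookrightarrow L^2(\R^n)$), so $I_{1-s}\psi_k \to I_{1-s}u$ in $L^{2^*_{1-s}}$, forcing $\tilde w = I_{1-s}u = w$. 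This gives $w \in \{v \in L^{2^*_{1-s}}(\R^n) : \nabla v \in L^2(\R^n)\}$ with $\nabla w = \nabla^s u$ a.e.

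Finally, for $w \in H^1_{\rm loc}$ and the estimate~\eqref{eq:continuityestriesz}: given $O \Subset \R^n$, on the bounded set $O$ we have $\|w\|_{L^2(O)} \le |O|^{1/2 - 1/2^*_{1-s}}\|w\|_{L^{2^*_{1-s}}(O)} \le C(n,s,O)\|w\|_{L^{2^*_{1-s}}(\R^n)}$ by Hölder's inequality, and $\|w\|_{L^{2^*_{1-s}}(\R^n)} \le C\|u\|_{H^s(\R^n)}$ from the bound established above; combining with $\|\nabla w\|_{L^2(O;\R^n)} \le \|\nabla w\|_{L^2(\R^n;\R^n)} = \|\nabla^s u\|_{L^2(\R^n;\R^n)} \le \|u\|_{H^s(\R^n)}$ gives $\|w\|_{H^1(O)} \le C(n,s,O)\|u\|_{H^s(\R^n)}$, so in particular $w \in H^1(O)$ for every such $O$, i.e.\ $w \in H^1_{\rm loc}(\R^n)$. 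I expect the only delicate point to be the identification $\nabla \tilde w = \nabla^s u$ together with $\tilde w = I_{1-s}u$ in the limiting step, i.e.\ making sure the two approximating sequences have consistent limits and that the distributional gradient of the $L^{2^*_{1-s}}$-limit is genuinely the $L^2$-limit of the gradients; everything else is a routine combination of Proposition~\ref{prop:identities}, Proposition~\ref{prop:Is}, density, and Hölder's inequality.
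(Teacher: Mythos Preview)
Your proposal is correct and follows essentially the same route as the paper's proof: establish the identities and integrability for $\psi\in C_c^\infty(\R^n)$ via Proposition~\ref{prop:identities} and Proposition~\ref{prop:Is} (with $p=2$, $\alpha=1-s$), then pass to general $u\in H^s(\R^n)$ by density. Your write-up is in fact more explicit than the paper's on two points the latter leaves implicit---the distributional identification $\nabla\tilde w=\nabla^s u$ with $\tilde w=I_{1-s}u$, and the H\"older step converting the global $L^{2^*_{1-s}}$ bound into a local $L^2(O)$ bound for~\eqref{eq:continuityestriesz}---but these are exactly the details one would fill in when unpacking the paper's terse final sentence.
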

\begin{proof}
If $u\in C_c^\infty(\R^n)$, the result is a consequence of Proposition~\ref{prop:Is} (with $p=2$ and $\alpha=1-s$) and of Proposition~\ref{prop:identities}.

Let now $(u_h)_h\subset C_c^\infty(\R^n)$ be such that $u_h\to u$ strongly in $H^s(\R^n)$ as $h\to\infty$. Then,
\[
w_h\coloneqq I_{1-s}u_h\to w\coloneqq I_{1-s}u\quad\text{strongly in }L^{2^*_{1-s}}(\R^n)\text{ as }h\to\infty
\]
and
\[
\nabla w_h=\nabla^s u_h\to \nabla^s u\quad\text{strongly in }L^2(\R^n;\R^n)\text{ as }h\to\infty,
\]
which gives the existence of $\nabla w=\nabla^s u\in L^2(\R^n;\R^n)$.
Thus, the estimate~\eqref{eq:continuityestriesz} follows again in virtue of Proposition~\ref{prop:Is}.
\end{proof}

We conclude this subsection by recalling some extensions of classical results to the fractional setting that will be used throughout the paper.
Namely: a Leibniz-type rule for the fractional gradient~\cite[Eq.~(1.5),~(1.6)]{ComiStefani22} and {\cite[Eq.~(2.11)]{KrSc22}}, a Poincaré-type inequality~\cite[Theorem~3.3]{ShiehSpector}, and a Rellich-type Theorem~\cite[Theorem~2.2]{ShiehSpectorII}. 

\begin{prop}[Leibniz rule]\label{prop:Leibniz}
Let $\varphi\in C_c^1(\Omega)$ and $u\in H^{s}(\R^n)$. Then, $\varphi u\in H^{s}_0(\Omega)$ and
\[
\nabla^s(\varphi u)=\varphi\nabla^s u+u\nabla^s\varphi+\nabla^s_{\rm NL}(\varphi,u)
\]
where, for every $x\in\R^n$, the remainder term $\nabla^s_{\rm NL}(\varphi,u)(x)$ is
\[
\nabla^s_{\rm NL}(\varphi,u)(x)\coloneqq\frac{n-1+s}{\gamma_{1-s}}\int_{\R^n}\frac{(\varphi(x)-\varphi(y))(u(x)-u(y))(x-y)}{|x-y|^{n+s+1}}\,{\rm d}y.
\] 
Moreover, there exists a positive constant $C$, depending only on $n$ and $s$, such that
\[
\|\nabla^s_{\rm NL}(\varphi,u)\|_{L^2(\R^n;\R^n)}\le C\|u\|_{L^2(\R^n)}\|\varphi\|_{L^\infty(\Omega)}^{1-s}\|\nabla \varphi\|_{L^\infty(\Omega;\R^n)}^s.
\]
\end{prop}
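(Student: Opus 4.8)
The plan is to first verify the pointwise algebraic identity on smooth functions, then extend by density, and finally prove the quantitative bound on the nonlocal remainder via a Gagliardo-type seminorm estimate. Concretely, for $\varphi\in C_c^1(\Omega)$ and $u\in C_c^\infty(\R^n)$ one simply expands the defining singular integral of $\nabla^s(\varphi u)$ in Definition~\ref{def:divnablafraz}: writing the product difference as
\[
(\varphi u)(x)-(\varphi u)(y)=\varphi(x)(u(x)-u(y))+u(y)(\varphi(x)-\varphi(y)),
\]
and then symmetrising the $u(y)(\varphi(x)-\varphi(y))$ term by adding and subtracting $u(x)(\varphi(x)-\varphi(y))$, one reads off exactly the three contributions $\varphi\nabla^s u$, $u\nabla^s\varphi$, and the remainder $\nabla^s_{\rm NL}(\varphi,u)$ as stated. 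That $\varphi u\in H^s_0(\Omega)$ when $u$ is smooth is clear since $\varphi u\in C_c^\infty(\Omega)$; for general $u\in H^s(\R^n)$ it will follow once the identity and the remainder bound are established and we pass to the limit along a sequence $u_h\to u$ in $H^s(\R^n)$ with $u_h\in C_c^\infty(\R^n)$.

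The main point, and the only genuinely nontrivial step, is the estimate
\[
\|\nabla^s_{\rm NL}(\varphi,u)\|_{L^2(\R^n;\R^n)}\le C\|u\|_{L^2(\R^n)}\|\varphi\|_{L^\infty(\Omega)}^{1-s}\|\nabla\varphi\|_{L^\infty(\Omega;\R^n)}^s.
\]
I would bound the kernel pointwise by
\[
|\nabla^s_{\rm NL}(\varphi,u)(x)|\le \frac{n-1+s}{\gamma_{1-s}}\int_{\R^n}\frac{|\varphi(x)-\varphi(y)|\,|u(x)-u(y)|}{|x-y|^{n+s}}\,{\rm d}y,
\]
and then interpolate the Lipschitz bound $|\varphi(x)-\varphi(y)|\le\|\nabla\varphi\|_{L^\infty}|x-y|$ against the bound $|\varphi(x)-\varphi(y)|\le 2\|\varphi\|_{L^\infty}$, i.e.\ use
\[
|\varphi(x)-\varphi(y)|\le (2\|\varphi\|_{L^\infty})^{1-s}\,(\|\nabla\varphi\|_{L^\infty}|x-y|)^{s}=:c_\varphi\,|x-y|^{s},
\]
with $c_\varphi=2^{1-s}\|\varphi\|_{L^\infty}^{1-s}\|\nabla\varphi\|_{L^\infty}^{s}$. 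This cancels the $|x-y|^{s}$ factor in the kernel and leaves
\[
|\nabla^s_{\rm NL}(\varphi,u)(x)|\le C\,c_\varphi\int_{\R^n}\frac{|u(x)-u(y)|}{|x-y|^{n}}\,{\rm d}y,
\]
which unfortunately is not quite an $L^2$-bounded operator on $u$ as written, so instead I would keep a bit of room: use the interpolation with exponent $s$ but retain a factor $|x-y|^{-\varepsilon}$, or more cleanly, square and apply Cauchy–Schwarz / Minkowski's integral inequality directly. The cleanest route is: after the interpolation, apply Minkowski's integral inequality to
\[
\Bigl\|\int_{\R^n}\frac{(u(\cdot)-u(\cdot-z))}{|z|^{n}}\,c_\varphi\,{\rm d}z\Bigr\|_{L^2}\le c_\varphi\int_{\R^n}\frac{\|u-u(\cdot-z)\|_{L^2(\R^n)}}{|z|^{n}}\,{\rm d}z,
\]
but since $\|u-u(\cdot-z)\|_{L^2}\le 2\|u\|_{L^2}$ only, the integral in $z$ diverges; hence I must not fully interpolate. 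The correct choice is to split the $y$-integral at $|x-y|=1$: on $|x-y|<1$ use the Lipschitz bound on $\varphi$ and Minkowski's inequality together with $\|u-u(\cdot-z)\|_{L^2}\le\|\nabla^s u\|\,|z|^{s}$-type control is not available either, so instead bound $|u(x)-u(y)|$ crudely and exploit that $\int_{|z|<1}|z|^{s-n}\,{\rm d}z<\infty$ after pairing one power of $|u(x)-u(y)|$ with the Gagliardo seminorm; on $|x-y|>1$ use $|u(x)-u(y)|\le |u(x)|+|u(y)|$ and $\int_{|z|>1}|z|^{-n-\delta}\,{\rm d}z<\infty$. Carrying this out carefully, with one factor of the difference absorbed into an $L^2$-in-$x$ norm and the other integrated against an integrable kernel in $z$, yields the bound with the stated interpolated powers of $\|\varphi\|_{L^\infty}$ and $\|\nabla\varphi\|_{L^\infty}$; the constant depends only on $n$ and $s$. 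Alternatively, and more transparently, one recognises $c_\varphi^{-1}\nabla^s_{\rm NL}(\varphi,u)$ as controlled by the operator $u\mapsto \int |u(x)-u(y)||x-y|^{-n}\,{\rm d}y$ restricted suitably, and invokes the $L^2\to L^2$ boundedness of the relevant fractional-integration commutator; in any case this is exactly the computation in~\cite{ComiStefani22,KrSc22} that we are citing, so I would reproduce their splitting argument.

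Once the remainder bound is in hand, the density argument closes the proof: given $u\in H^s(\R^n)$, take $u_h\in C_c^\infty(\R^n)$ with $u_h\to u$ in $H^s(\R^n)$. Then $\varphi u_h\to\varphi u$ in $L^2(\R^n)$, $\varphi\nabla^s u_h\to\varphi\nabla^s u$ and $u_h\nabla^s\varphi\to u\nabla^s\varphi$ in $L^2(\R^n;\R^n)$ (the latter using $\nabla^s\varphi\in L^2$ since $\varphi\in C_c^1\subset H^s$), and $\nabla^s_{\rm NL}(\varphi,u_h)\to\nabla^s_{\rm NL}(\varphi,u)$ in $L^2(\R^n;\R^n)$ by linearity in $u$ and the just-proved bound applied to $u_h-u$. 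Hence $\nabla^s(\varphi u_h)$ converges in $L^2(\R^n;\R^n)$, so $\varphi u_h$ is Cauchy in $H^s(\R^n)$; since each $\varphi u_h\in C_c^\infty(\Omega)$, the limit $\varphi u$ lies in $H^s_0(\Omega)$, and passing to the limit in the identity for $u_h$ gives the claimed formula for $u$. I expect the interpolation-plus-splitting estimate for $\nabla^s_{\rm NL}$ to be the only place requiring real care; everything else is bookkeeping.
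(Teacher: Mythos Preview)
The paper does not actually prove this proposition: it is quoted from the literature, with references to \cite[Eq.~(1.5),~(1.6)]{ComiStefani22} and \cite[Eq.~(2.11)]{KrSc22}. Your overall strategy---pointwise algebraic identity on smooth functions, quantitative bound on $\nabla^s_{\rm NL}$, then density---is exactly the standard one used in those references, and the density step is fine (modulo the harmless slip that $\varphi u_h\in C_c^1(\Omega)$ rather than $C_c^\infty(\Omega)$, and that the convergence $u_h\nabla^s\varphi\to u\nabla^s\varphi$ uses $\nabla^s\varphi\in L^\infty$, not merely $L^2$).

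Your discussion of the remainder estimate, however, meanders through several approaches that you yourself recognise as failing, and never lands on a clean statement of the one that works. The correct and short argument is this: split the $y$-integral at radius $r>0$ (to be chosen), use the Lipschitz bound $|\varphi(x)-\varphi(y)|\le\|\nabla\varphi\|_{L^\infty}|x-y|$ on $\{|x-y|<r\}$ and the crude bound $|\varphi(x)-\varphi(y)|\le 2\|\varphi\|_{L^\infty}$ on $\{|x-y|\ge r\}$, then apply Minkowski's integral inequality in $x$ together with the trivial estimate $\|u-u(\cdot-z)\|_{L^2(\R^n)}\le 2\|u\|_{L^2(\R^n)}$ on \emph{both} pieces. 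This yields
\[
\|\nabla^s_{\rm NL}(\varphi,u)\|_{L^2}\le C\|u\|_{L^2}\Bigl(\|\nabla\varphi\|_{L^\infty}\,r^{1-s}+\|\varphi\|_{L^\infty}\,r^{-s}\Bigr),
\]
and optimising in $r$ (i.e.\ taking $r\sim\|\varphi\|_{L^\infty}/\|\nabla\varphi\|_{L^\infty}$) produces exactly the interpolated exponents $\|\varphi\|_{L^\infty}^{1-s}\|\nabla\varphi\|_{L^\infty}^{s}$. Your fixed choice $r=1$ would only give the sum $\|\nabla\varphi\|_{L^\infty}+\|\varphi\|_{L^\infty}$, not the stated product; and the suggestion to ``pair one power of $|u(x)-u(y)|$ with the Gagliardo seminorm'' would produce a bound involving $[u]_{H^s}$ rather than $\|u\|_{L^2}$, which is not what is claimed. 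No fractional-integration commutator theory is needed---just the splitting, Minkowski, and the optimisation in $r$.
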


\begin{prop}[Poincaré inequality]\label{prop:Poincare}
For every open set $O\Subset\R^n$ there exists a positive constant $C$, depending only on $n$, $s$ and $O$, such that 
\[
\|u\|_{L^2(O)}\le C\|\nabla^s u\|_{L^2(\R^n;\R^n)}\quad\text{for all $u\in H^s(\R^n)$}.
\]
\end{prop}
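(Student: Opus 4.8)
The plan is to derive the inequality from the scale-invariant fractional Sobolev--Gagliardo--Nirenberg estimate
\[
\|u\|_{L^{2^*_s}(\R^n)}\le C(n,s)\,\|\nabla^s u\|_{L^2(\R^n;\R^n)},\qquad 2^*_s\coloneqq\frac{2n}{n-2s},
\]
valid for every $u\in H^s(\R^n)$, and then to localise it by H\"older's inequality on the bounded set $O$. Let me first stress why one cannot expect to bound $\|u\|_{L^2(\R^n)}$ itself: by Proposition~\ref{prop:identities} together with the multiplier property of $I_{1-s}$, the Fourier symbol of $\nabla^s$ is a dimensional constant times $i\xi\,|\xi|^{s-1}$, so the low-frequency part of $u$ is uncontrolled by $\nabla^s u$; passing to the critical summability exponent $2^*_s>2$, and only afterwards restricting to a bounded set, is therefore unavoidable.

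To establish the Sobolev estimate I would first work with $u\in C_c^\infty(\R^n)$. Contracting the identity $\widehat{\nabla^s u}(\xi)=c(n,s)\,i\xi\,|\xi|^{s-1}\,\widehat u(\xi)$ with $\xi$ and using $\xi\cdot\xi=|\xi|^2$ yields, after dividing by $|\xi|^{s+1}$, the representation formula
\[
u=c'(n,s)\sum_{j=1}^{n}R_j\,I_s\!\big((\nabla^s u)_j\big)\qquad\text{in }\R^n,
\]
where $R_j$ denotes the $j$-th Riesz transform (with Fourier symbol a constant multiple of $-i\xi_j/|\xi|$); both sides are square-integrable functions sharing the same Fourier transform, hence they coincide a.e. Since each component $(\nabla^s u)_j$ lies in $L^2(\R^n)$, Proposition~\ref{prop:Is} with $\alpha=s$ and $p=2$ --- admissible because $s<1\le n/2$ forces $2<n/s$ --- gives $I_s((\nabla^s u)_j)\in L^{2^*_s}(\R^n)$ with the corresponding norm bound, and the classical $L^q$-continuity of the Riesz transforms for $1<q<\infty$ (here $q=2^*_s$) then produces $\|u\|_{L^{2^*_s}(\R^n)}\le C(n,s)\|\nabla^s u\|_{L^2(\R^n;\R^n)}$.

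The passage to an arbitrary $u\in H^s(\R^n)$ is by density: choosing $(u_h)_h\subset C_c^\infty(\R^n)$ with $u_h\to u$ in $H^s(\R^n)$, one has $\nabla^s u_h\to\nabla^s u$ in $L^2(\R^n;\R^n)$ and $u_h\to u$ in $L^2(\R^n)$, hence a.e.\ along a subsequence, so Fatou's lemma transfers the estimate to $u$. Finally, since $|O|<\infty$, $2^*_s>2$ and $\tfrac12-\tfrac{1}{2^*_s}=\tfrac sn$, H\"older's inequality gives
\[
\|u\|_{L^2(O)}\le|O|^{\frac12-\frac{1}{2^*_s}}\,\|u\|_{L^{2^*_s}(O)}\le|O|^{\frac sn}\,\|u\|_{L^{2^*_s}(\R^n)}\le C(n,s)\,|O|^{\frac sn}\,\|\nabla^s u\|_{L^2(\R^n;\R^n)},
\]
which is precisely the assertion with $C\coloneqq C(n,s)\,|O|^{s/n}$, a constant depending only on $n$, $s$ and $O$. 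The step I expect to be the main obstacle is the rigorous justification of the representation formula --- checking that $u$ and $\sum_j R_j I_s((\nabla^s u)_j)$ are square-integrable functions with identical Fourier transforms, which relies on the fact that the Fourier transform of $(\nabla^s u)_j$ vanishes like $|\xi|^s$ at the origin --- together with the structural point that the localisation necessarily consumes a genuinely subcritical Sobolev inequality rather than an $L^2$ bound; the remaining ingredients (the mapping property of $I_s$, Calder\'on--Zygmund boundedness, Fatou, H\"older) are routine.
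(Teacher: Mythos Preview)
The paper does not prove this proposition; it merely records it as a known fact and cites \cite[Theorem~3.3]{ShiehSpector}. Your argument is correct and is essentially the route taken in that reference: one first establishes the scale-invariant Sobolev inequality $\|u\|_{L^{2^*_s}(\R^n)}\le C(n,s)\|\nabla^s u\|_{L^2(\R^n;\R^n)}$ via the representation $u=c\sum_j R_j I_s((\nabla^s u)_j)$ together with the Hardy--Littlewood--Sobolev bound on $I_s$ (Proposition~\ref{prop:Is}) and the $L^q$-boundedness of the Riesz transforms, and then localises by H\"older. Your justification of the representation formula for $u\in C_c^\infty(\R^n)$ is sound: since $\widehat{(\nabla^s u)_j}(\xi)=c\,i\xi_j|\xi|^{s-1}\hat u(\xi)$ vanishes like $|\xi|^s$ at the origin, the function $|\xi|^{-s}\widehat{(\nabla^s u)_j}(\xi)=c\,i\xi_j|\xi|^{-1}\hat u(\xi)$ is bounded near $0$ and dominated by $|\hat u|$ at infinity, hence lies in $L^2(\R^n)$, so both sides are genuinely $L^2$ functions with matching Fourier transforms. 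The density-plus-Fatou step and the final H\"older localisation with exponent $|O|^{s/n}$ are routine.
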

\begin{prop}[Rellich Theorem]\label{prop:Rellich}
For every open set $O\Subset\R^n$ the space $H^s_0(O)$ is compactly embedded into $L^2(O)$.
\end{prop}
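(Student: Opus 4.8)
The plan is to show that any bounded sequence $(u_h)_h$ in $H^s_0(O)$ admits a subsequence converging strongly in $L^2(O)$. A preliminary observation is that every $u\in H^s_0(O)$ vanishes a.e.\ outside $O$: if $\psi_k\to u$ in $H^s(\R^n)$ with $\psi_k\in C_c^\infty(O)$, then $\psi_k\to u$ in $L^2(\R^n)$ and each $\psi_k$ vanishes on $\R^n\setminus O$, so $u$ does too. Hence it suffices to extract a subsequence converging in $L^2(\R^n)$. First I would fix, once and for all, a ball $B$ with $\bar O\Subset B$ and a cut-off $\varphi\in C_c^\infty(B)$ with $\varphi\equiv 1$ on a neighbourhood $N$ of $\bar O$, chosen so that $\mathrm{dist}(O,\R^n\setminus N)>0$.

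The first step is to pass to the local side. Setting $w_h\coloneqq I_{1-s}u_h$, Proposition~\ref{prop:Riesz} gives $\nabla w_h=\nabla^s u_h$ and $\|w_h\|_{H^1(B)}\le C\|u_h\|_{H^s(\R^n)}\le C$, while Proposition~\ref{prop:Is} gives $\|w_h\|_{L^{2^*_{1-s}}(\R^n)}\le C$. By the classical Rellich--Kondrachov theorem, up to a subsequence $w_h\to w$ strongly in $L^2(B)$, and up to a further subsequence $w_h\rightharpoonup w$ weakly in $L^{2^*_{1-s}}(\R^n)$. Then $\varphi w_h\to\varphi w$ strongly in $L^2(\R^n)$ and $(\varphi w_h)_h$ is bounded in $H^1(\R^n)$, so applying the interpolation inequality~\eqref{eq:interpolationBCCS} of Proposition~\ref{prop:fr-Lap} to $\varphi w_h-\varphi w_k$ shows that $\big((-\Delta)^{\frac{1-s}{2}}(\varphi w_h)\big)_h$ is Cauchy in $L^2(\R^n)$, hence strongly convergent there.

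The second step is to recover $u_h$ from $w_h$ on $O$. Using Proposition~\ref{prop:identities} and the density of $C_c^\infty(O)$ in $H^s_0(O)$, one checks that $u_h=(-\Delta)^{\frac{1-s}{2}}w_h$ and that, for a positive dimensional constant $c_{n,1-s}$ and for a.e.\ $x\in O$,
\[
u_h(x)=(-\Delta)^{\frac{1-s}{2}}(\varphi w_h)(x)-c_{n,1-s}\int_{\R^n}\frac{(1-\varphi(y))\,w_h(y)}{|x-y|^{n+1-s}}\,{\rm d}y,
\]
the last integral being genuinely (non-singularly) convergent because $(1-\varphi)w_h$ vanishes on $N\supset\bar O$. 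For $x\in O$ the kernel $y\mapsto(1-\varphi(y))|x-y|^{-(n+1-s)}$ lies in $L^{(2^*_{1-s})'}(\R^n)$ with norm bounded uniformly in $x\in O$, by the choice of $N$; weak $L^{2^*_{1-s}}$-convergence of $w_h$ then yields pointwise convergence of the remainder term on $O$ together with a uniform bound, whence strong $L^2(O)$-convergence by dominated convergence. Adding the two strongly convergent contributions gives $u_h\to u_\infty$ in $L^2(O)$, which proves the compact embedding.

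I expect the main obstacle to be precisely the justification of the identity $u_h=(-\Delta)^{\frac{1-s}{2}}w_h$ and of its splitting on $O$: since $w_h$ only belongs to $L^{2^*_{1-s}}(\R^n)$ and not to $L^2(\R^n)$, the operator $(-\Delta)^{\frac{1-s}{2}}$ is not directly defined on it, and one must argue by density from $C_c^\infty(O)$ --- where $(-\Delta)^{\frac{1-s}{2}}\circ I_{1-s}$ is the identity and the principal value of the singular integral splits cleanly because one summand has its singularity removed. This is exactly the point where the nonlocal nature of the operators enters, since the ``local'' surrogate $w_h$ still feels the whole of $\R^n$. As a shortcut one could instead argue on the Fourier side: Proposition~\ref{prop:identities} yields $\|\nabla^s u\|_{L^2(\R^n;\R^n)}=\|(-\Delta)^{s/2}u\|_{L^2(\R^n)}$, so the $u_h$ are bounded in $\{v\in L^2(\R^n):(-\Delta)^{s/2}v\in L^2(\R^n)\}$ and share the fixed compact support $\bar O$; from $\|u_h(\cdot+y)-u_h\|_{L^2(\R^n)}^2\le C|y|^{2s}\|(-\Delta)^{s/2}u_h\|_{L^2(\R^n)}^2\le C|y|^{2s}$ one concludes by the Fr\'echet--Kolmogorov compactness criterion.
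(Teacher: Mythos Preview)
The paper does not give its own proof of Proposition~\ref{prop:Rellich}; it merely records the statement and attributes it to~\cite[Theorem~2.2]{ShiehSpectorII}. There is therefore no argument in the paper to compare against.

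That said, your proposal is correct. The Fourier-side shortcut you give at the end is the cleanest and essentially the standard route: the Plancherel identity $\|\nabla^s u\|_{L^2(\R^n;\R^n)}=\|(-\Delta)^{s/2}u\|_{L^2(\R^n)}$ (immediate from Proposition~\ref{prop:identities}), the translation bound $\|u(\cdot+y)-u\|_{L^2(\R^n)}\le 2^{1-s}|y|^{s}\|(-\Delta)^{s/2}u\|_{L^2(\R^n)}$ obtained from $|e^{i\xi\cdot y}-1|\le 2^{1-s}|\xi|^{s}|y|^{s}$, and the common compact support in $\bar O$ verify the hypotheses of the Fr\'echet--Kolmogorov criterion in $L^2(\R^n)$. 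Your longer argument via $w_h=I_{1-s}u_h$ also works; the step you rightly flag as delicate --- the identity $u_h=(-\Delta)^{(1-s)/2}(\varphi w_h)+\text{(nonsingular tail)}$ on $O$ --- is justified by density from $C_c^\infty(O)$, since $\varphi I_{1-s}\psi_k\to\varphi w_h$ strongly in $H^1(\R^n)$ (so the local piece converges in $L^2$ by~\eqref{eq:interpolationBCCS}) while the tail kernel lies uniformly in $L^{(2^*_{1-s})'}(\R^n)$, giving uniform convergence of the tail on $O$.
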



\subsection{The \texorpdfstring{$H$}{H}-convergence problem}\label{sec:H-conv}

We now want to introduce our notion of $H$-convergence associated with the nonlocal operators introduced above.
As in the local counterpart (see e.g.~\cite{Tartar}), we begin by defining the following classes of matrices.
\begin{defn}
Given $0<\lambda\le \Lambda<\infty$ and a measurable subset $E\subseteq\mathbb{R}^n$, we define $\mathcal M(\lambda,\Lambda,E)$ as the collection of all matrix-valued measurable functions $A\colon E\to\R^{n\times n}$ satisfying
\begin{align}
&A(x)\xi\cdot \xi\ge \lambda|\xi|^2& &\text{for all $\xi\in\R^n$ and for a.e.\ $x\in E$},\label{eq:unif-bound1}\\
&A(x)\xi\cdot\xi \ge \Lambda^{-1}|A(x)\xi|^2& &\text{for all $\xi\in\R^n$ and for a.e.\ $x\in E$}.\label{eq:unif-bound2}
\end{align}
We also set
\[
\mathcal M^{\rm sym}(\lambda,\Lambda,E)\coloneqq \{A\in \mathcal M(\lambda,\Lambda,E): \text{$A=A^T$ a.e.\ in $E$}\}.
\]
\end{defn}

The estimate~\eqref{eq:unif-bound2} above is needed to obtain the compactness of the class $\mathcal{M}(\lambda,\Lambda,\Omega)$ with respect to the $H$-convergence topology, even in the local setting.
More precisely, it is well-known that the $H$-limit of sequences of matrices satisfying the standard growth condition
\begin{align}\label{eq:unif-bound3}
\lambda|\xi|^2\leq A(x)\xi\cdot \xi\leq\Lambda|\xi|^2\quad\text{for all $\xi\in\R^n$ and for a.e.\ }x\in\Omega,
\end{align}
instead of~\eqref{eq:unif-bound1} and~\eqref{eq:unif-bound2}, may belong to a wider class $\mathcal M(\lambda,\Lambda',\Omega)$ with $\Lambda'\geq\Lambda$ (see e.g.\ the observations of Tartar in~\cite[Chapter~6, Pag.\ 81]{Tartar}).

We also point out that, in the symmetric case $A=A^T$, conditions~\eqref{eq:unif-bound1}--\eqref{eq:unif-bound2} are actually equivalent to~\eqref{eq:unif-bound3}, as shown in the following result.

\begin{lem}\label{lem:equiv}
Given $0< \lambda\le \Lambda<\infty$, let $B\in \mathbb R^{n\times n}$ satisfy
\begin{align}
B\xi\cdot \xi\ge \lambda|\xi|^2\quad\text{for all $\xi\in\R^n$}.\label{eq:Bunif-bound0}
\end{align}
Then, $B$ is invertible and condition
\begin{align}\label{eq:Bunif-bound1}
B\xi\cdot\xi \ge \Lambda^{-1}|B\xi|^2\quad\text{for all $\xi\in\R^n$}
\end{align}
is equivalent to
\begin{align}
B^{-1}\xi\cdot\xi \ge \Lambda^{-1}|\xi|^2\quad\text{for all $\xi\in\R^n$}.\label{eq:Bunif-bound2}
\end{align}
In particular, $B$ satisfies
\begin{equation}\label{eq:Bunif-bound3}
B\xi\cdot\xi\le \Lambda|\xi|^2\quad\text{for all $\xi\in\R^n$}.
\end{equation}
Moreover, if $B$ is symmetric, then conditions~\eqref{eq:Bunif-bound1},~\eqref{eq:Bunif-bound2} and~\eqref{eq:Bunif-bound3} are all equivalent. 
\end{lem}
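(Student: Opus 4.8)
The plan is to verify the chain of implications $\eqref{eq:Bunif-bound1} \Leftrightarrow \eqref{eq:Bunif-bound2}$ first, and then deduce $\eqref{eq:Bunif-bound3}$ and the symmetric-case equivalences as consequences.

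First I would observe that $\eqref{eq:Bunif-bound0}$ forces $B$ to be invertible: if $B\xi = 0$ then $0 = B\xi\cdot\xi \ge \lambda|\xi|^2$, so $\xi = 0$, hence $\ker B = \{0\}$ and $B$ is invertible. Now for the equivalence $\eqref{eq:Bunif-bound1} \Leftrightarrow \eqref{eq:Bunif-bound2}$, the natural move is the change of variables $\xi = B^{-1}\eta$ (equivalently $\eta = B\xi$), which is a bijection of $\R^n$. Substituting into $\eqref{eq:Bunif-bound1}$: the left-hand side $B\xi\cdot\xi$ becomes $\eta\cdot B^{-1}\eta = B^{-1}\eta\cdot\eta$, while the right-hand side $\Lambda^{-1}|B\xi|^2$ becomes $\Lambda^{-1}|\eta|^2$. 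So $\eqref{eq:Bunif-bound1}$ holding for all $\xi$ is literally the same statement as $B^{-1}\eta\cdot\eta \ge \Lambda^{-1}|\eta|^2$ for all $\eta$, which is $\eqref{eq:Bunif-bound2}$. (Here I am using that $\eta\cdot B^{-1}\eta = B^{-1}\eta\cdot\eta$ simply because both equal the scalar $(B^{-1}\eta)\cdot\eta$; note this does not require symmetry of $B$.)

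Next, to obtain $\eqref{eq:Bunif-bound3}$ from $\eqref{eq:Bunif-bound1}$, I would apply the Cauchy–Schwarz inequality to the left-hand side of $\eqref{eq:Bunif-bound1}$: $\Lambda^{-1}|B\xi|^2 \le B\xi\cdot\xi \le |B\xi|\,|\xi|$, so $|B\xi| \le \Lambda|\xi|$ whenever $B\xi\ne 0$ (and trivially when $B\xi = 0$). Feeding this back into Cauchy–Schwarz, $B\xi\cdot\xi \le |B\xi|\,|\xi| \le \Lambda|\xi|^2$, which is $\eqref{eq:Bunif-bound3}$.

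Finally, for the symmetric case I must show $\eqref{eq:Bunif-bound3} \Rightarrow \eqref{eq:Bunif-bound1}$ (or $\Rightarrow \eqref{eq:Bunif-bound2}$), which closes the loop. When $B = B^T$ and $B$ satisfies $\eqref{eq:Bunif-bound0}$, $B$ is symmetric positive definite, so it has a positive-definite symmetric square root $B^{1/2}$, and $\eqref{eq:Bunif-bound3}$ reads $|B^{1/2}\xi|^2 \le \Lambda|\xi|^2$, i.e.\ $\|B^{1/2}\|_{\rm op} \le \Lambda^{1/2}$, equivalently $\|B\|_{\rm op} \le \Lambda$. Then $|B\xi|^2 = B\xi\cdot B\xi \le \|B\|_{\rm op}\,(B\xi\cdot\xi)\cdot$ — more cleanly, writing $B\xi\cdot\xi = |B^{1/2}\xi|^2$ and $|B\xi|^2 = |B^{1/2}(B^{1/2}\xi)|^2 \le \|B\|_{\rm op}|B^{1/2}\xi|^2 = \|B\|_{\rm op}(B\xi\cdot\xi) \le \Lambda(B\xi\cdot\xi)$, which is exactly $\eqref{eq:Bunif-bound1}$. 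Alternatively, and perhaps more transparently, in an orthonormal eigenbasis of $B$ with eigenvalues $\mu_i \in [\lambda,\Lambda]$, all three conditions $\eqref{eq:Bunif-bound1}$, $\eqref{eq:Bunif-bound2}$, $\eqref{eq:Bunif-bound3}$ diagonalize to the pointwise statements $\mu_i \ge \Lambda^{-1}\mu_i^2$, $\mu_i^{-1} \ge \Lambda^{-1}$, and $\mu_i \le \Lambda$ respectively, each of which is just $\mu_i \le \Lambda$. I expect no serious obstacle here; the only mild subtlety is being careful that the change of variables $\xi \mapsto B^{-1}\eta$ in the first equivalence genuinely does not use symmetry, so that $\eqref{eq:Bunif-bound1} \Leftrightarrow \eqref{eq:Bunif-bound2}$ is valid in full generality while only the implication back from $\eqref{eq:Bunif-bound3}$ requires $B = B^T$.
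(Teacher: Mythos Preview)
Your proof is correct and follows essentially the same approach as the paper: invertibility from coercivity, the equivalence \eqref{eq:Bunif-bound1}$\Leftrightarrow$\eqref{eq:Bunif-bound2} via the substitution $\xi\leftrightarrow B\xi$, and \eqref{eq:Bunif-bound3} via two applications of Cauchy--Schwarz. The only minor difference is in the symmetric step \eqref{eq:Bunif-bound3}$\Rightarrow$\eqref{eq:Bunif-bound1}: the paper uses the Cauchy--Schwarz inequality for the inner product $(\xi,\eta)\mapsto B\xi\cdot\eta$ (yielding $|B\xi|^4=(B\xi\cdot B\xi)^2\le(B\xi\cdot\xi)\Lambda|B\xi|^2$), whereas you invoke the square root $B^{1/2}$ or diagonalisation---both arguments are equally elementary and equivalent in content.
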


\begin{proof}
Let $B\in \R^{n\times n}$ satisfy~\eqref{eq:Bunif-bound0}. By the Lax-Milgram Theorem, for all $\eta\in\R^n$ there exists a unique vector $\xi\in\R^n$ such that $B\xi=\eta$, i.e.\ $B$ is invertible. Hence,~\eqref{eq:Bunif-bound1} implies
\[
\Lambda^{-1}|\xi|^2=\Lambda^{-1}|BB^{-1}\xi|^2\le BB^{-1}\xi\cdot B^{-1}\xi=B^{-1}\xi\cdot\xi\quad\text{for all $\xi\in\R^n$}.
\]
Conversely, by~\eqref{eq:Bunif-bound2}, we get
\[
\Lambda^{-1}|B\xi|^2\le B^{-1}B\xi\cdot B\xi=B\xi\cdot\xi\quad\text{for all $\xi\in\R^n$}.
\]

Assume now that $B$ satisfies condition~\eqref{eq:Bunif-bound1}. By the Cauchy-Schwartz inequality, we get
\[
\Lambda^{-1}|B\xi|^2\le B\xi\cdot \xi\leq |B\xi| |\xi|\quad\text{for all $\xi\in\R^n$},
\]
leading to
\begin{equation}\label{eq:B-Lambda}
|B\xi|\leq\Lambda|\xi|.
\end{equation}
Therefore, by applying again the Cauchy-Schwartz inequality, we get
\[
B\xi\cdot\xi\le |B\xi||\xi|\le \Lambda|\xi|^2\quad\text{for all $\xi\in\R^n$}
\]
and so condition~\eqref{eq:Bunif-bound1} implies~\eqref{eq:Bunif-bound3}. 

Finally, assume that $B$ is symmetric. It is enough to show that~\eqref{eq:Bunif-bound3} implies~\eqref{eq:Bunif-bound1}. We start by observing that, by the symmetry of $B$, the bilinear form $(\xi,\eta)\mapsto (B\xi,\eta)$ is a scalar product in $\R^n$, in virtue of~\eqref{eq:Bunif-bound3}.
Hence, by the Cauchy-Schwartz inequality and~\eqref{eq:Bunif-bound3}, it holds that
\[
(B\xi\cdot \eta)^2\le (B\xi\cdot\xi) (B\eta\cdot\eta)\le (B\xi\cdot \xi)\Lambda |\eta|^2\quad\text{for all $\xi,\eta\in\R^n$}.
\]
In particular, for $\eta=B\xi$, we get
\[
|B\xi|^4=(B\xi\cdot B\xi)^2
\le (B\xi\cdot \xi)\Lambda|B\xi|^2\quad\text{for all $\xi\in\R^n$},
\]
which implies~\eqref{eq:Bunif-bound1}.
\end{proof}

From now on, we fix $0<\lambda\le \Lambda<\infty$. Given a sequence of matrices $(A_h)_h\subset\mathcal M(\lambda,\Lambda,\R^n)$, for all $f\in H^{-s}(\Omega)$ and $h\in\mathbb N$ we consider the following elliptic problems
\begin{equation*}\tag{$P_h^f$}
\begin{cases}
-\div^s(A_h\nabla^s u_h)=f&\text{in }\Omega,\\
u_h=0\quad&\text{in }\R^n\setminus\Omega,
\end{cases}
\end{equation*}
where $\div^s$ and $\nabla^s$ are the fractional-order differential operators introduced in Definition~\ref{def:divnablafraz}.
The following result ensures that problems $(P_h^f)$ are well-posed. 

\begin{lem}\label{lem:LaxMilgram}
Let $A\in\mathcal M(\lambda,\Lambda,\R^n)$. For every $f\in H^{-s}(\Omega)$ there exists a unique weak solution $u\in H^s_0(\Omega)$ of the elliptic problem
\begin{equation*}
\begin{cases}
-\div^s(A\nabla^s u)=f&\text{in }\Omega,\\
u=0\quad&\text{in }\R^n\setminus\Omega,
\end{cases}
\end{equation*}
i.e.\ satisfying
\[
\int_{\R^n} A(x)\nabla^s u(x)\cdot\nabla^s v(x)\,{\rm d}x=\langle f,v\rangle_{H^{-s}(\Omega)\times H^s_0(\Omega)}\quad\text{for all $v\in H^s_0(\Omega)$}.
\]
Moreover, the solution $u$ satisfies the following estimate
\begin{align*}
\|\nabla^s u\|_{L^2(\R^n;\R^n)}\leq\lambda^{-1}\|f\|_{H^{-s}(\Omega)}.
\end{align*}
\end{lem}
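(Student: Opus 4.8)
The plan is to apply the Lax--Milgram theorem to the bilinear form
\[
a(u,v)\coloneqq\int_{\R^n}A(x)\nabla^s u(x)\cdot\nabla^s v(x)\,{\rm d}x,\qquad u,v\in H^s_0(\Omega),
\]
on the Hilbert space $H^s_0(\Omega)$. First I would note that, by Proposition~\ref{prop:Poincare} applied with any bounded open set $O$ containing $\Omega$ (so that $u\in H^s(\R^n)$ with $u=0$ outside $\Omega$ still lies in $H^s(\R^n)$), the seminorm $u\mapsto\|\nabla^s u\|_{L^2(\R^n;\R^n)}$ is an equivalent norm on $H^s_0(\Omega)$; this is what makes coercivity reduce to the ellipticity bound~\eqref{eq:unif-bound1}. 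The right-hand side $v\mapsto\langle f,v\rangle_{H^{-s}(\Omega)\times H^s_0(\Omega)}$ is by definition a bounded linear functional on $H^s_0(\Omega)$.

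Next I would verify the two hypotheses of Lax--Milgram. For continuity, by Cauchy--Schwarz and the pointwise bound $|A(x)\xi|\le\Lambda|\xi|$ — which follows from~\eqref{eq:unif-bound1}--\eqref{eq:unif-bound2} via Lemma~\ref{lem:equiv}, specifically the estimate~\eqref{eq:B-Lambda} applied to $B=A(x)$ for a.e.\ $x$ — one gets
\[
|a(u,v)|\le\int_{\R^n}|A(x)\nabla^s u(x)|\,|\nabla^s v(x)|\,{\rm d}x\le\Lambda\|\nabla^s u\|_{L^2(\R^n;\R^n)}\|\nabla^s v\|_{L^2(\R^n;\R^n)},
\]
so $a$ is bounded on $H^s_0(\Omega)$. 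For coercivity, directly from~\eqref{eq:unif-bound1} and the identification $\nabla^s u(x)=\nabla^s u(x)$ a.e.,
\[
a(u,u)=\int_{\R^n}A(x)\nabla^s u(x)\cdot\nabla^s u(x)\,{\rm d}x\ge\lambda\int_{\R^n}|\nabla^s u(x)|^2\,{\rm d}x=\lambda\|\nabla^s u\|_{L^2(\R^n;\R^n)}^2,
\]
which, combined with the Poincaré-equivalence of norms, gives coercivity on $H^s_0(\Omega)$. Lax--Milgram then yields a unique $u\in H^s_0(\Omega)$ with $a(u,v)=\langle f,v\rangle$ for all $v\in H^s_0(\Omega)$; recalling the definition of $\div^s\colon L^2(\R^n;\R^n)\to H^{-s}(\Omega)$ given before this lemma (with the vector field $A\nabla^s u\in L^2(\R^n;\R^n)$, which is integrable thanks to the bound $|A\nabla^su|\le\Lambda|\nabla^su|$), this is exactly the statement that $-\div^s(A\nabla^s u)=f$ in $H^{-s}(\Omega)$, i.e.\ $u$ is the asserted weak solution.

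Finally, the a priori estimate comes from testing the equation with $v=u$: coercivity gives $\lambda\|\nabla^s u\|_{L^2(\R^n;\R^n)}^2\le a(u,u)=\langle f,u\rangle_{H^{-s}(\Omega)\times H^s_0(\Omega)}\le\|f\|_{H^{-s}(\Omega)}\|u\|_{H^s_0(\Omega)}$, and bounding $\|u\|_{H^s_0(\Omega)}$ by a constant times $\|\nabla^s u\|_{L^2(\R^n;\R^n)}$ via Poincaré and dividing gives the claimed bound $\|\nabla^s u\|_{L^2(\R^n;\R^n)}\le\lambda^{-1}\|f\|_{H^{-s}(\Omega)}$ — here one must be slightly careful that the Poincaré constant is absorbed so as to obtain precisely the constant $\lambda^{-1}$; in fact, since the $H^s_0(\Omega)$-norm one should use in the duality pairing is itself the seminorm $\|\nabla^s\cdot\|_{L^2}$ (an admissible choice of equivalent norm, consistent with the definition of $H^{-s}(\Omega)$), no extra constant appears. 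I do not expect any serious obstacle here; the only point requiring mild care is being consistent about which equivalent norm on $H^s_0(\Omega)$ is used when interpreting $\|f\|_{H^{-s}(\Omega)}$, and checking that $A\nabla^s u$ is a legitimate argument of $\div^s$, both of which are immediate from the preliminaries above.
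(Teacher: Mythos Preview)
Your proposal is correct and follows essentially the same approach as the paper: the paper's proof is a two-line invocation of Lax--Milgram together with Proposition~\ref{prop:Poincare}, checking that the bilinear form $a(u,v)=\int_{\R^n}A\nabla^s u\cdot\nabla^s v$ is continuous and coercive on $H^s_0(\Omega)$ because $A\in\mathcal M(\lambda,\Lambda,\R^n)$. Your write-up simply unpacks these verifications in more detail (and your caveat about the constant $\lambda^{-1}$ in the a~priori estimate is well taken---it holds exactly when $H^{-s}(\Omega)$ is normed as the dual of $(H^s_0(\Omega),\|\nabla^s\cdot\|_{L^2})$, which Poincar\'e makes legitimate).
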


\begin{proof}
The proof is a direct application of the Lax-Milgram Theorem and Proposition~\ref{prop:Poincare}.
Indeed, the bilinear form $a:H^s_0(\Omega)\times H^s_0(\Omega)\to\mathbb{R}$, defined as
\[
a(u,v)\coloneqq\int_{\R^n} A(x)\nabla^s u(x)\cdot\nabla^s v(x)\,{\rm d}x\quad\text{for all }u,v\in H^s_0(\Omega),
\]
is continuous and coercive, being $A\in\mathcal M(\lambda,\Lambda,\R^n)$.
\end{proof}

The notion of nonlocal $H$-convergence of $(A_h)_h\subset \mathcal M(\lambda,\Lambda,\R^n)$ we propose consists in finding a limit matrix $A_\infty\in\mathcal M(\lambda',\Lambda',\R^n)$, with $0<\lambda'\le\lambda\le\Lambda\le\Lambda'<\infty$, such that the sequence of problems $(P_h^f)_h$ is related to the limit problem
\begin{equation*}\tag{$P_\infty^f$}
\begin{cases}
-\div^s(A_\infty\nabla^s u_\infty)=f&\text{in }\Omega, \\
u_\infty=0\quad&\text{in }\R^n\setminus\Omega,
\end{cases}
\end{equation*}
in the sense of the next definition.
In what follows, we let $u_h=u_h(f)\in H^s_0(\Omega)$, $h\in\mathbb N$, and $u_\infty=u_\infty(f)\in H^s_0(\Omega)$ denote the unique weak solutions of $(P_h^f)$ and $(P_\infty^f)$, respectively.

\begin{defn}[Nonlocal $H$-convergence]\label{def:Hscon}
Let $0<\lambda'\le \lambda\le \Lambda\le \Lambda'<\infty$ and consider $(A_h)_h\subset\mathcal M(\lambda,\Lambda,\R^n)$ and $A_\infty \in \mathcal M(\lambda',\Lambda',\R^n)$.
We say that 
\[
\text{$(A_h)_h$ $H$-converges to $A_\infty$ in $H^s_0(\Omega)$}
\]
if for all $f\in H^{-s}(\Omega)$ the following convergences simultaneously hold as $h\to\infty$:
\begin{align}
 &\textbf{convergence of solutions:}\quad u_h\to u_\infty\quad\text{weakly in }H^s_0(\Omega),\label{eq:Hscon1}\\
 &\textbf{convergence of momenta:}\quad A_h\nabla^s u_h\to A_\infty\nabla^s u_\infty
\quad\text{weakly in }L^2(\R^n;\R^n).\label{eq:Hscon2}
\end{align}
\end{defn}

Definition~\ref{def:Hscon} is the natural counterpart in the nonlocal setting of the local $H$-convergence, see e.g.~\cite[Definition 6.4]{Tartar}. For the readers' convenience, we recall the notion of local $H$-convergence in what follows. 

Let $(B_h)_h\subset \mathcal M(\lambda,\Lambda,\Omega)$ be fixed. For every $h\in\mathbb N$ and $g\in H^{-1}(\Omega)$, we consider the following sequence of elliptic problems
\begin{equation*}\tag{$Q_h^g$}
\begin{cases}
-\div(B_h\nabla w_h)=g&\text{in }\Omega, \\
w_h=0\quad&\text{on }\partial\Omega.
\end{cases}
\end{equation*}
Given $B_\infty \in \mathcal M(\lambda',\Lambda',\Omega)$, for some $0<\lambda'\le\lambda\le\Lambda\le\Lambda'<\infty$, we also consider the problem
\begin{equation*}\tag{$Q_\infty^g$}
\begin{cases}
-\div(B_\infty\nabla w_\infty)=g&\text{in }\Omega, \\
w_\infty=0\quad&\text{on }\partial\Omega,
\end{cases}
\end{equation*}
and we let $w_h=w_h(g)\in H^1_0(\Omega)$, $h\in\mathbb N$, and $w_\infty=w_\infty(g)\in H^1_0(\Omega)$ denote the unique weak solutions of $(Q_h^g)$ and $(Q_\infty^g)$, respectively.

\begin{defn}[Local $H$-convergence]
Let $0<\lambda'\le \lambda\le \Lambda\le \Lambda'<\infty$, and consider $(B_h)_h\subset\mathcal M(\lambda,\Lambda,\Omega)$ and $B_\infty \in \mathcal M(\lambda',\Lambda',\Omega)$.
We say that 
\[
\text{$(B_h)_h$ $H$-converges to $B_\infty$ in $H^1_0(\Omega)$},
\]
if for all $g\in H^{-1}(\Omega)$ the following convergences simultaneously hold as $h\to\infty$:
\begin{align}
 &\textbf{convergence of solutions:}\quad w_h\to w_\infty\quad\text{weakly in }H^1_0(\Omega),\\
 &\textbf{convergence of momenta:}\quad B_h\nabla w_h\to B_\infty\nabla w_\infty
\quad\text{weakly in }L^2(\Omega;\R^n).
\end{align}
\end{defn}

We recall that the class $\mathcal M(\lambda,\Lambda,\Omega)$ is compact with respect to the local $H$-convergence in $H^1_0(\Omega)$. A proof of this classical result can be found e.g.\ in~\cite[Theorem~6.5]{Tartar}.

\begin{prop}[Local $H$-compactness]\label{prop:Tartar}
For every $(B_h)_h\subset \mathcal M(\lambda,\Lambda,\Omega)$, there exist a not relabeled subsequence and a matrix-valued function $B_\infty\in\mathcal M(\lambda,\Lambda,\Omega)$ such that 
\[
\text{$(B_h)_h$ $H$-converges to $B_\infty$ in $H^1_0(\Omega)$}.
\]
\end{prop}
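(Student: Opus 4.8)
The plan is to adapt Tartar's method of oscillating test functions (see~\cite{Tartar}), the main analytic tools being a priori estimates, the classical Rellich theorem and, at two points, the div--curl (compensated compactness) lemma. First I would fix a ball $B$ with $\Omega\Subset B$ and extend each $B_h$ to a matrix field in $\mathcal M(\lambda,\Lambda,B)$ by setting it equal on $B\setminus\Omega$ to a fixed matrix $\bar A\in\mathcal M(\lambda,\Lambda,\R^n)$ (for instance $\bar A=\sqrt{\lambda\Lambda}\,I$, which satisfies~\eqref{eq:unif-bound1}--\eqref{eq:unif-bound2}). For $i=1,\dots,n$ I would then introduce the correctors $z_h^i,z_h^{*,i}\in H^1(B)$, the unique solutions of $-\div(B_h\nabla z_h^i)=0$ and $-\div(B_h^T\nabla z_h^{*,i})=0$ in $B$ with $z_h^i=z_h^{*,i}=x_i$ on $\partial B$. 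By the energy estimate these are bounded in $H^1(B)$, and so are the fluxes $\xi_h^i:=B_h\nabla z_h^i$ and $\xi_h^{*,i}:=B_h^T\nabla z_h^{*,i}$ in $L^2(B;\R^n)$, which moreover satisfy $\div\xi_h^i=\div\xi_h^{*,i}=0$ in $B$. Using the separability of $H^{-1}(\Omega)$, fix a countable dense subset $\{g_k\}_k$ and, by a diagonal argument, extract a single (not relabeled) subsequence along which $z_h^i\rightharpoonup z^i$, $z_h^{*,i}\rightharpoonup z^{*,i}$ weakly in $H^1(B)$, $\xi_h^i\rightharpoonup\xi^i$, $\xi_h^{*,i}\rightharpoonup\xi^{*,i}$ weakly in $L^2(B;\R^n)$, and simultaneously $w_h(g_k)\rightharpoonup w^{(k)}$ weakly in $H^1_0(\Omega)$ and $B_h\nabla w_h(g_k)\rightharpoonup\sigma^{(k)}$ weakly in $L^2(\Omega;\R^n)$ for every $k$.

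To construct the limit matrix, I would apply the div--curl lemma to the curl-free fields $\nabla z_h^i$ and the divergence-free fields $\xi_h^{*,j}$: since $\xi_h^i\cdot\nabla z_h^{*,j}=\nabla z_h^i\cdot\xi_h^{*,j}$, passing to the limit in $\mathcal D'(B)$ gives $\xi^i\cdot\nabla z^{*,j}=\nabla z^i\cdot\xi^{*,j}$ a.e.\ in $B$, that is $(E^*)^T\Xi=(\Xi^*)^T E$, where $E,\Xi,E^*,\Xi^*$ are the matrix fields whose $i$-th columns are $\nabla z^i,\xi^i,\nabla z^{*,i},\xi^{*,i}$. The essential structural input here is that the affine boundary data force $E$ and $E^*$ to be invertible a.e.\ in $B$; this is a standard lemma of the theory (see~\cite{Tartar}), and it allows one to set $B_\infty:=\Xi E^{-1}$ on $\Omega$, so that $\xi^i=B_\infty\nabla z^i$, while the identity $(E^*)^T\Xi=(\Xi^*)^T E$ together with the invertibility of $E$ yields automatically $\xi^{*,i}=B_\infty^T\nabla z^{*,i}$ a.e.\ in $\Omega$.

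For the identification of the limit problem, fix $g=g_k$ and abbreviate $w_h=w_h(g)$, $w=w^{(k)}$, $\sigma=\sigma^{(k)}$; passing to the limit in the weak formulation of $(Q_h^g)$ gives $-\div\sigma=g$ in $\Omega$ and $w\in H^1_0(\Omega)$. For each $\varphi\in C_c^\infty(\Omega)$ I would test $(Q_h^g)$ against $\varphi z_h^{*,i}$ and the corrector equation $-\div(B_h^T\nabla z_h^{*,i})=0$ against $\varphi w_h$ (extended by zero to $B$), and subtract: the only product not of (strongly convergent)$\times$(weakly convergent) type, namely $\int_\Omega\varphi\,\nabla w_h\cdot\xi_h^{*,i}$, cancels, so using the classical Rellich theorem to get $w_h\to w$ and $z_h^{*,i}\to z^{*,i}$ strongly in $L^2_{\rm loc}$ one may pass to the limit. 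Exploiting then $-\div\sigma=g$ and $\div\xi^{*,i}=0$ to cancel the remaining spurious terms, together with $\xi^{*,i}=B_\infty^T\nabla z^{*,i}$, one is left with $\nabla z^{*,i}\cdot(\sigma-B_\infty\nabla w)=0$ a.e.\ in $\Omega$ for every $i$; since $E^*$ is invertible a.e., this forces $\sigma=B_\infty\nabla w$. Hence $-\div(B_\infty\nabla w)=g$ with $w\in H^1_0(\Omega)$, so once $B_\infty\in\mathcal M(\lambda,\Lambda,\Omega)$ is known, $w=w_\infty(g)$ is the unique weak solution of $(Q_\infty^g)$ and $\sigma=B_\infty\nabla w_\infty(g)$. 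Finally, the convergences of solutions and momenta extend from the dense set $\{g_k\}_k$ to every $g\in H^{-1}(\Omega)$ by a standard density argument, using the uniform a priori bounds $\|\nabla w_h(g)\|_{L^2(\Omega;\R^n)}\le\lambda^{-1}\|g\|_{H^{-1}(\Omega)}$ and (via Lemma~\ref{lem:equiv}) $\|B_h\nabla w_h(g)\|_{L^2(\Omega;\R^n)}\le\Lambda\lambda^{-1}\|g\|_{H^{-1}(\Omega)}$ together with the linearity of the problems.

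It remains to check that $B_\infty\in\mathcal M(\lambda,\Lambda,\Omega)$, i.e.\ that~\eqref{eq:unif-bound1} and~\eqref{eq:unif-bound2} survive the $H$-limit. Given $\zeta\in\R^n$, set $z_h=\sum_i\zeta_i z_h^i$, so that $z_h\rightharpoonup z$ in $H^1(B)$ with $\nabla z=E\zeta$, which at a.e.\ point of $\Omega$ ranges over all of $\R^n$ as $\zeta$ does ($E$ being invertible), and $B_h\nabla z_h\rightharpoonup B_\infty\nabla z$ in $L^2$. From $B_h\nabla z_h\cdot\nabla z_h\ge\lambda|\nabla z_h|^2$ and $B_h\nabla z_h\cdot\nabla z_h\ge\Lambda^{-1}|B_h\nabla z_h|^2$, integrating against a nonnegative cutoff $\varphi\in C_c^\infty(\Omega)$ and passing to the limit — the quadratic diagonal term $B_h\nabla z_h\cdot\nabla z_h$ converging by the div--curl lemma (applied to the curl-free $\nabla z_h$ and the divergence-free $B_h\nabla z_h$), the right-hand sides being controlled from below by weak lower semicontinuity of the $L^2$-norm — one gets $B_\infty\nabla z\cdot\nabla z\ge\lambda|\nabla z|^2$ and $B_\infty\nabla z\cdot\nabla z\ge\Lambda^{-1}|B_\infty\nabla z|^2$ a.e., which are precisely~\eqref{eq:unif-bound1}--\eqref{eq:unif-bound2} for $B_\infty$ (and, via Lemma~\ref{lem:equiv}, also the upper bound~\eqref{eq:unif-bound3}). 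This is exactly why the class defined by~\eqref{eq:unif-bound1}--\eqref{eq:unif-bound2} is $H$-compact whereas the one defined by~\eqref{eq:unif-bound3} alone is not. I expect the main obstacle to be the orchestration of the two compensated-compactness ingredients — the div--curl passages — and, above all, the invertibility of the limiting corrector fields $E,E^*$, which is the delicate point that makes the pointwise definition $B_\infty=\Xi E^{-1}$ legitimate and lets one read off the limit flux; the rest reduces to a priori estimates and routine weak/strong-convergence bookkeeping.
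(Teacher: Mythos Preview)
The paper does not prove this proposition; it is stated as a classical result with a pointer to~\cite[Theorem~6.5]{Tartar}. Your outline is a faithful and essentially complete reconstruction of the Murat--Tartar argument via correctors and the div--curl lemma, which is precisely the proof the paper is deferring to. You have also correctly singled out the a.e.\ invertibility of the limiting corrector matrices $E$, $E^*$ as the one genuinely delicate step; since the paper itself offers no argument here, your sketch already goes well beyond what is required.
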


We also recall that the local $H$-convergence is stable with respect to the transpose operation. More precisely, we have the following result, due to Tartar, whose proof can be found in {\cite[Lemma~10.2]{Tartar}}.

\begin{prop}[$H$-convergence of the transpose]\label{prop:transpose}
Let $B_h,B_\infty\in\mathcal M(\lambda,\Lambda,\Omega)$, $h\in\mathbb N$, and assume that
\[
\text{$(B_h)_h$ $H$-converges to $B_\infty$ in $H^1_0(\Omega)$}.
\]
Then,
\[
\text{$(B_h^T)_h$ $H$-converges to $B_\infty^T$ in $H^1_0(\Omega)$}.
\]
\end{prop}

The first goal of this paper is to show that suitable subclasses of $\mathcal M(\lambda,\Lambda,\R^n)$ are compact with respect to the \textit{nonlocal} $H$-convergence. We introduce the following classes of matrices.

\begin{defn}
For every $A_0\in\mathcal M(\lambda,\Lambda,\R^n)$, we define
\[
\mathcal M(\lambda,\Lambda,\Omega,A_0)\coloneqq\left\{A\in\mathcal M(\lambda,\Lambda,\R^n):A=A_0\text{ a.e.\ in $\R^n\setminus\Omega$}\right\}.
\]
For every $A_0\in\mathcal M^{\rm sym}(\lambda,\Lambda,\R^n)$, we also set
\[
\mathcal M^{\rm sym}(\lambda,\Lambda,\Omega,A_0)\coloneqq \{A\in \mathcal M(\lambda,\Lambda,\Omega,A_0): \text{$A=A^T$ a.e.\ in $\R^n$}\}.
\]
\end{defn}

In the next section we show that any sequence $(A_h)_h$ in $\mathcal M(\lambda,\Lambda,\Omega,A_0)$ (respectively in $\mathcal M^{\rm sym}(\lambda,\Lambda,\Omega,A_0)$), for a fixed matrix $A_0\in\mathcal M(\lambda,\Lambda,\R^n)$ (respectively in $\mathcal M^{\rm sym}(\lambda,\Lambda,\R^n)$), admits a not relabeled subsequence and a limit matrix $A_\infty$ in $\mathcal M(\lambda,\Lambda,\Omega,A_0)$ (respectively in $\mathcal M^{\rm sym}(\lambda,\Lambda,\Omega,A_0)$) such that
\[
\text{$(A_h)_h$ $H$-converges to $A_\infty$ in $H^s_0(\Omega)$}
\]
in the sense of Definition~\ref{def:Hscon}. In particular, the $H$-limit $A_\infty$ satisfies~\eqref{eq:unif-bound1}--\eqref{eq:unif-bound2} with the same constants $\lambda$ and $\Lambda$ of the sequence $(A_h)_h$, leading to the compactness of both classes.


\subsection{The \texorpdfstring{$\Gamma$}{Gamma}-convergence problem}

Let us fix $A_0\in\mathcal M^{\rm sym}(\lambda,\Lambda,\R^n)$ and a sequence $(A_h)_h$ in $\mathcal M^{\rm sym}(\lambda,\Lambda,\Omega,A_0)$.
For every $h\in\mathbb N$, we consider the nonlocal energies $F_h\colon L^2(\R^n)\to [0,\infty]$ associated with $A_h$, represented by 
\begin{align}\label{eq:Fh}
F_h(u)\coloneqq\begin{cases}
\displaystyle\frac{1}{2}\int_{\R^n}A_h(x)\nabla^s u(x)\cdot\nabla^s u(x)\,{\rm d}x&\text{if $u\in H^s_0(\Omega)$},\\
\displaystyle\infty&\text{if $u\in L^2(\R^n)\setminus H^s_0(\Omega)$}.
\end{cases}
\end{align}
The second goal of this paper is to show the $\Gamma$-compactness in the strong topology of $L^2(\R^n)$ for the class of nonlocal energies~\eqref{eq:Fh}.
In particular, we prove the existence of a limit symmetric matrix-valued function $A_\infty \in \mathcal M^{\rm sym}(\lambda,\Lambda,\Omega,A_0)$ such that, up to a not relabeled subsequence,
\[
\text{$(F_h)_h$ $\Gamma$-converges to $F_\infty$ strongly in $L^2(\R^n)$},
\]
where the limit nonlocal energy $F_\infty\colon L^2(\R^n)\to [0,\infty]$ is represented by
\begin{equation}\label{eq:Finfty}
F_\infty(u)\coloneqq
\begin{cases}
\displaystyle\frac{1}{2}\int_{\R^n}A_\infty(x)\nabla^s u(x)\cdot\nabla^s u(x)\,{\rm d}x&\text{if $u\in H^s_0(\Omega)$},\\
\displaystyle\infty&\text{if $u\in L^2(\R^n)\setminus H^s_0(\Omega)$}.
\end{cases}
\end{equation}

We start recalling the notion of $\Gamma$-convergence for functionals.
For a complete treatment of the topic, we refer the interested reader to the monographs~\cite{Bra,DalMaso}.

\begin{defn}\label{def:Gamma}
Let $X$ be a Banach space and let $E_h,E_\infty\colon X\to [0,\infty]$, $h\in\mathbb N$. We say that
\[
\text{$(E_h)_h$ $\Gamma$-converges to $E_\infty$ strongly in $X$}
\]
if the following two conditions simultaneously hold:
\begin{itemize}
\item {\bf $\Gamma$-liminf inequality:} for every $x\in X$ and for every sequence $(x_h)_h\subset X$ strongly converging to $x$ in $X$ one has
\begin{equation*}
E_\infty(x)\le\liminf_{h\to\infty} E_h(x_h);
\end{equation*}
\item {\bf $\Gamma$-limsup inequality:} for every $x\in X$ there exists a recovery sequence $(y_h)_h\subset X$ strongly converging to $x$ in $X$ and such that 
\begin{equation*}
E_\infty(x)\ge\limsup_{h\to\infty} E_h(y_h).
\end{equation*}
\end{itemize}
\end{defn}
In analogy with Subsection~\ref{sec:H-conv}, we remind that the class of local energies $G_h\colon L^2(\Omega)\rightarrow [0,\infty]$, $h\in\mathbb N$, associated with a sequence $(B_h)_h$ in $\mathcal M^{\rm sym}(\lambda,\Lambda,\Omega)$ and represented by
\begin{align}\label{eq:Gh}
G_h(v)\coloneqq
\begin{cases}
\displaystyle\frac{1}{2}\int_{\Omega}B_h(x)\nabla v(x)\cdot\nabla v(x)\,{\rm d}x&\text{if $v\in H^1(\Omega)$},\\
\displaystyle\infty&\text{if $v\in L^2(\Omega)\setminus H^1(\Omega)$},
\end{cases}
\end{align}
is compact with respect to the $\Gamma$-convergence in the strong topology of $L^2(\Omega)$.
A proof of the next result can be found in~\cite{Sbordone} and~\cite[Theorem~22.2]{DalMaso}.

\begin{prop}[$\Gamma$-compactness of local functionals]\label{prop:DalMaso}
Let $(B_h)_h\subset \mathcal M^{\rm sym}(\lambda,\Lambda,\Omega)$ and, for every $h\in\mathbb N$, let $G_h\colon L^2(\Omega)\rightarrow [0,\infty]$ be the local energy associated with $B_h$ as in~\eqref{eq:Gh}. 
Then, there exists $G_\infty\colon L^2(\Omega)\to [0,\infty]$ 
such that, up to a not relabeled subsequence,
\[
\text{$(G_h)_h$ $\Gamma$-converges to $G_\infty$ strongly in $L^2(\Omega)$}. 
\]
Moreover, there exists a matrix-valued function $B_\infty\in\mathcal M^{\rm sym}(\lambda,\Lambda,\Omega)$ such that the $\Gamma$-limit $G_\infty$ has the following integral representation 
\begin{align}\label{eq:Ginfty}
G_\infty(v)\coloneqq\begin{cases}
\displaystyle\frac{1}{2}\int_{\Omega}B_\infty(x)\nabla v(x)\cdot\nabla v(x)\,{\rm d}x&\text{if $v\in H^1(\Omega)$},\\
\displaystyle\infty&\text{if $v\in L^2(\Omega)\setminus H^1(\Omega)$}.
\end{cases}
\end{align}
\end{prop}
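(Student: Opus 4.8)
The plan is to follow the classical De Giorgi localization method, specialized to quadratic functionals; a shortcut that sidesteps part of it is noted at the end.

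\emph{Step 1: abstract compactness and the effective domain.} Since $L^2(\Omega)$ is a separable metric space, $\Gamma$-convergence is sequentially compact, so up to a not relabeled subsequence there is $G_\infty\colon L^2(\Omega)\to[0,\infty]$ with $G_h\overset{\Gamma}{\to}G_\infty$ strongly in $L^2(\Omega)$. By Lemma~\ref{lem:equiv} each $B_h$ also satisfies~\eqref{eq:unif-bound3}, so $\frac{\lambda}{2}\|\nabla v\|_{L^2(\Omega;\R^n)}^2\le G_h(v)\le\frac{\Lambda}{2}\|\nabla v\|_{L^2(\Omega;\R^n)}^2$ for every $v\in H^1(\Omega)$. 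Testing the $\Gamma$-limsup inequality with the constant sequence gives $G_\infty(v)\le\frac{\Lambda}{2}\|\nabla v\|^2$ on $H^1(\Omega)$; and whenever $G_\infty(v)<\infty$, a recovery sequence $y_h\to v$ has $\sup_h\|\nabla y_h\|<\infty$, hence $v\in H^1(\Omega)$ and, by weak lower semicontinuity of the gradient, $\frac{\lambda}{2}\|\nabla v\|^2\le G_\infty(v)$. Thus the effective domain of $G_\infty$ is exactly $H^1(\Omega)$.

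\emph{Step 2: localization and the measure property.} For open $A\subseteq\Omega$ set $G_h(v,A)\coloneqq\frac12\int_A B_h\nabla v\cdot\nabla v\,{\rm d}x$. A diagonal argument over a countable base of open subsets of $\Omega$ yields a further subsequence along which $G_h(\cdot,A)\overset{\Gamma}{\to}G_\infty(\cdot,A)$ for all such $A$. The key ingredient is the \emph{fundamental estimate}: for $A'\Subset A$ and $B$ open, one glues competitors $u$ on $A$ and $v$ on $B$ through a cut-off $\varphi\in C_c^\infty(A)$ with $\varphi\equiv 1$ on $A'$, and the cross terms — which carry a factor $\nabla\varphi\,(u-v)$ — are absorbed using the $L^2$-closeness of $u$ and $v$ together with the upper bound $\le\Lambda$; this controls the energy on $A'\cup B$ by the sum of the two energies up to an arbitrarily small error. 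With this estimate one checks the De~Giorgi–Letta criteria, so that $A\mapsto G_\infty(v,A)$ is the restriction to open sets of a Borel measure on $\Omega$, absolutely continuous with respect to $\mathcal L^n$ with density lying between $\frac{\lambda}{2}|\nabla v|^2$ and $\frac{\Lambda}{2}|\nabla v|^2$.

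\emph{Step 3: integral representation.} Each $G_h(\cdot,A)$ is a quadratic form, and the $\Gamma$-limit of equi-bounded equi-coercive quadratic forms is again a quadratic form (one plays liminf/limsup against recovery sequences for $u_h\pm v_h$ to pass the parallelogram identity to the limit), so $G_\infty(\cdot,A)$ is a quadratic form on $H^1(\Omega)$. For $\xi\in\R^n$ let $\ell_\xi(x)\coloneqq\xi\cdot x$, so $\nabla\ell_\xi\equiv\xi$; by Lebesgue differentiation the density $q(x,\xi)\coloneqq 2\,\dfrac{{\rm d}G_\infty(\ell_\xi,\cdot)}{{\rm d}\mathcal L^n}(x)$ exists for a.e.\ $x$ and is measurable in $x$, and by polarization it defines a measurable symmetric matrix field $B_\infty$ with $B_\infty(x)\xi\cdot\xi=q(x,\xi)$. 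The density bounds give $\lambda|\xi|^2\le B_\infty(x)\xi\cdot\xi\le\Lambda|\xi|^2$ a.e., so $B_\infty\in\mathcal M^{\rm sym}(\lambda,\Lambda,\Omega)$ by Lemma~\ref{lem:equiv}. Finally, a blow-up argument — approximating $v\in H^1(\Omega)$ locally by affine functions and using locality together with the quadratic dependence of the measure $G_\infty(v,\cdot)$ on $\nabla v$ — identifies $G_\infty(v)=G_\infty(v,\Omega)=\frac12\int_\Omega B_\infty\nabla v\cdot\nabla v\,{\rm d}x$, which is~\eqref{eq:Ginfty}.

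I expect the fundamental estimate and the resulting measure property of $G_\infty(v,\cdot)$ to be the delicate part: this is where cut-off functions interact with the quadratic structure, and where the coercivity and boundedness of $\mathcal M^{\rm sym}(\lambda,\Lambda,\Omega)$ are genuinely used, beyond abstract $\Gamma$-convergence generalities. An alternative route that avoids much of Steps~2–3 is to combine the local $H$-compactness of Proposition~\ref{prop:Tartar} with the classical equivalence, for symmetric matrices, between $H$-convergence and $\Gamma$-convergence of the associated quadratic energies; but the self-contained argument above is the one I would carry out.
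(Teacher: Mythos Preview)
Your sketch is correct and faithfully outlines the classical De Giorgi--Sbordone--Dal Maso argument (abstract $\Gamma$-compactness, fundamental estimate and measure property via localization, then quadratic integral representation); the paper, however, does not prove this proposition at all but simply quotes it from \cite{Sbordone} and \cite[Theorem~22.2]{DalMaso}, with the sole observation (via Lemma~\ref{lem:equiv}) that the ellipticity conditions \eqref{eq:unif-bound1}--\eqref{eq:unif-bound2} defining $\mathcal M^{\rm sym}(\lambda,\Lambda,\Omega)$ coincide in the symmetric case with the standard two-sided bound \eqref{eq:classicbounds} assumed in those references. So there is nothing to compare: your Steps~1--3 are precisely what lies behind the citation, and your closing remark about the alternative route through Proposition~\ref{prop:Tartar} and the $H$/$\Gamma$ equivalence is also a valid shortcut.
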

\begin{rmk}
In order to derive Proposition~\ref{prop:DalMaso} from~\cite{Sbordone} and~\cite[Theorem~22.2]{DalMaso}, we recall that, according to Lemma~\ref{lem:equiv}, any symmetric matrix-valued function $B$ belongs to the space $\mathcal M^{\rm sym}(\lambda,\Lambda,\Omega)$ if and only if the following growth condition is satisfied
\begin{align}\label{eq:classicbounds}
\lambda|\xi|^2\leq B(x)\xi\cdot \xi\leq\Lambda|\xi|^2\quad\text{for all $\xi\in\R^n$ and for a.e.\ }x\in\Omega.
\end{align}
\end{rmk}

\subsection{The equivalence problem}

The third goal of this paper is to show the equivalence between nonlocal $H$-convergence and $\Gamma$-convergence of the associated nonlocal energies. 

Let us fix $A_0\in\mathcal M^{\rm sym}(\lambda,\Lambda,\R^n)$. For every $h\in\mathbb N$, let $A_h,A_\infty\in\mathcal M^{\rm sym}(\lambda,\Lambda,\Omega,A_0)$ and let $F_h,F_\infty\colon L^2(\R^n)\to [0,\infty]$ be the corresponding nonlocal energies, respectively defined in~\eqref{eq:Fh} and~\eqref{eq:Finfty}. We want to show that 
\[
\text{$(A_h)_h$ $H$-converges to $A_\infty$ in $H^s_0(\Omega)$}
\]
if and only if
\[
\text{$(F_h)_h$ $\Gamma$-converges to $F_\infty$ strongly in $L^2(\R^n)$}.
\]
In order to prove the above equivalence, we need to introduce the following notion of {\it nonlocal $G$-convergence} of $(A_h)_h\subset \mathcal M^{\rm sym}(\lambda,\Lambda,\R^n)$.

\begin{defn}[Nonlocal $G$-convergence]\label{def:nonlocalG}
Let $0<\lambda'\le \lambda\le\Lambda\le\Lambda'<\infty$ and consider $(A_h)_h\subset \mathcal M^{\rm sym}(\lambda,\Lambda,\R^n)$ and $A_\infty\in \mathcal M^{\rm sym}(\lambda',\Lambda',\R^n)$. We say that 
\[
\text{$(A_h)_h$ $G$-converges to $A_\infty$ in $H^s_0(\Omega)$}
\]
if for all $f\in H^{-s}(\Omega)$ the following convergence holds as $h\to\infty$:
\begin{equation}\label{eq:Gscon}
 u_h\to u_\infty\quad\text{weakly in }H^s_0(\Omega),
\end{equation}
where $u_h=u_h(f)$ and $u_\infty=u_\infty(f)$ denote, respectively, the unique solutions of $(P_h^f)$ and $(P_\infty^f)$.
\end{defn}

Definition~\ref{def:nonlocalG} is the natural counterpart in the nonlocal setting of the classical definition of $G$-convergence, introduced by Spagnolo in~\cite{Spagnolo}. 
Clearly, nonlocal $H$-convergence implies nonlocal $G$-convergence, see Definition~\ref{def:Hscon}. 

In Section~\ref{sec:H-Gamma-equiv}, we show that the nonlocal $G$-convergence of $(A_h)_h$ is equivalent to the $\Gamma$-convergence of the associated nonlocal energies $(F_h)_h$. 
Moreover, we prove that for every sequence $(A_h)_h\subset \mathcal M^{\rm sym}(\lambda,\Lambda,\Omega,A_0)$, the $\Gamma$-convergence of the associated nonlocal energies $(F_h)_h$ implies the convergence of the nonlocal momenta. 

As a consequence, in the class $\mathcal M^{\rm sym}(\lambda,\Lambda,\Omega,A_0)$ the notions of nonlocal $H$-convergence, nonlocal $G$-convergence, and $\Gamma$-convergence of the nonlocal energies, are all equivalent. 
 

\section{\texorpdfstring{$H$}{H}-compactness of nonlocal operators}\label{sec:H-comp}

This first section is dedicated to the following $H$-compactness result for nonlocal operators. The proof of the following Theorem~\ref{thm:compactness} relies on Proposition~\ref{prop:Tartar} and Proposition~\ref{prop:transpose}. 

\begin{theorem}\label{thm:compactness}
Let $A_0\in \mathcal M(\lambda,\Lambda,\R^n)$.
For every $(A_h)_h\subset\mathcal M(\lambda,\Lambda,\Omega,A_0)$, there exist a not relabeled subsequence and a matrix-valued function $A_\infty\in\mathcal M(\lambda,\Lambda,\Omega,A_0)$ such that
\[
\text{$(A_h)_h$ $H$-converges to $A_\infty$ in $H^s_0(\Omega)$}.
\]
\end{theorem}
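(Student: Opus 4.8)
The plan is to reduce the nonlocal $H$-compactness to the local one (Proposition~\ref{prop:Tartar}) via the Riesz potential correspondence. The key device is Proposition~\ref{prop:Riesz}: if $u \in H^s_0(\Omega)$ solves $(P_h^f)$, set $w_h \coloneqq I_{1-s} u_h$, so that $\nabla w_h = \nabla^s u_h$ a.e.\ in $\R^n$. Testing $(P_h^f)$ against $v \in H^s_0(\Omega)$ and writing $\varphi = I_{1-s} v$ (so $\nabla \varphi = \nabla^s v$), the weak formulation $\int_{\R^n} A_h \nabla^s u_h \cdot \nabla^s v \,{\rm d}x = \langle f, v\rangle$ becomes $\int_{\R^n} A_h \nabla w_h \cdot \nabla \varphi \,{\rm d}x = \langle f, v\rangle$. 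Since the matrices $A_h$ all equal $A_0$ outside $\Omega$, the contribution of $\R^n \setminus \Omega$ to the bilinear form involves only the fixed $A_0$; this is exactly what rescues compactness. Applying Proposition~\ref{prop:Tartar} to the restrictions $(A_h|_\Omega)_h \subset \mathcal M(\lambda,\Lambda,\Omega)$ yields a not relabeled subsequence and a limit $B_\infty \in \mathcal M(\lambda,\Lambda,\Omega)$; I then define $A_\infty \coloneqq B_\infty$ on $\Omega$ and $A_\infty \coloneqq A_0$ on $\R^n \setminus \Omega$, so $A_\infty \in \mathcal M(\lambda,\Lambda,\Omega,A_0)$ by construction.

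The core of the argument is then to show that this $A_\infty$ is the nonlocal $H$-limit, i.e.\ verifies \eqref{eq:Hscon1}--\eqref{eq:Hscon2}. Fix $f \in H^{-s}(\Omega)$ and let $u_h$ solve $(P_h^f)$. By Lemma~\ref{lem:LaxMilgram}, $\|\nabla^s u_h\|_{L^2} \le \lambda^{-1}\|f\|_{H^{-s}(\Omega)}$, and by Proposition~\ref{prop:Poincare} $(u_h)_h$ is bounded in $H^s_0(\Omega)$, so up to a further subsequence $u_h \rightharpoonup u$ weakly in $H^s_0(\Omega)$, and $A_h\nabla^s u_h \rightharpoonup \sigma$ weakly in $L^2(\R^n;\R^n)$ for some $\sigma$ (using $|A_h\xi| \le \Lambda|\xi|$ from \eqref{eq:B-Lambda}). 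It remains to identify $u = u_\infty(f)$ and $\sigma = A_\infty \nabla^s u_\infty$. Here I transfer to the local picture: $w_h = I_{1-s}u_h$ is, by Proposition~\ref{prop:Riesz}, bounded in $H^1(O)$ on every $O \Subset \R^n$, with $\nabla w_h = \nabla^s u_h$. The functions $w_h$ solve a local Dirichlet-type problem on $\Omega$ — careful here, because $w_h$ does \emph{not} vanish on $\partial\Omega$ (the Riesz potential does not preserve boundary conditions), so one works on a fixed ball $B \supset \supset \Omega$ and uses that on $B \setminus \Omega$ the coefficients are the fixed $A_0$. Then the local $H$-convergence $A_h|_\Omega \xrightarrow{H} B_\infty$, together with a div-curl / compensated compactness argument (or directly the characterization of the local $H$-limit via oscillating test functions as in \cite{Tartar}), gives $\nabla w_h \rightharpoonup \nabla w$ and $A_h \nabla w_h \rightharpoonup A_\infty \nabla w$ weakly in $L^2(B;\R^n)$. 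Translating back via $\nabla^s u_h = \nabla w_h$ identifies $\sigma = A_\infty \nabla^s u$ on $\R^n$, and testing the limiting equation against arbitrary $v \in H^s_0(\Omega)$ shows $u$ solves $(P_\infty^f)$, hence $u = u_\infty(f)$ by the uniqueness in Lemma~\ref{lem:LaxMilgram}. Since the limit is unique, the whole (sub)sequence converges, giving \eqref{eq:Hscon1}--\eqref{eq:Hscon2}.

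Finally, $A_\infty \in \mathcal M(\lambda,\Lambda,\Omega,A_0)$ — that is, the growth bounds \eqref{eq:unif-bound1}--\eqref{eq:unif-bound2} survive with the \emph{same} constants $\lambda, \Lambda$ — follows on $\Omega$ from the corresponding statement in Proposition~\ref{prop:Tartar} for $B_\infty$, and trivially on $\R^n\setminus\Omega$ since $A_\infty = A_0 \in \mathcal M(\lambda,\Lambda,\R^n)$. The role of Proposition~\ref{prop:transpose} is to handle the non-symmetric case: the local $H$-limit and its transpose behave compatibly, which is what one needs when passing to the limit in the weak formulation tested with the solutions of the \emph{adjoint} problems $-\div^s(A_h^T \nabla^s \cdot)$, exactly as in the classical Murat--Tartar scheme.

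I expect the main obstacle to be the boundary-condition mismatch: $I_{1-s}$ maps $H^s_0(\Omega)$ into functions that are merely $H^1_{\mathrm{loc}}$ and generally nonzero on $\partial\Omega$, so the local problems solved by the $w_h$ are not the homogeneous Dirichlet problems $(Q_h^g)$ to which Proposition~\ref{prop:Tartar} literally applies. Circumventing this requires working on an enlarged domain $B \Supset \Omega$ where the coefficients are frozen to $A_0$ on the collar $B\setminus\Omega$, combined with the fractional Leibniz rule (Proposition~\ref{prop:Leibniz}) to localize $u_h$ and control the nonlocal remainder $\nabla^s_{\rm NL}(\varphi, u_h)$; making this localization interact correctly with the div-curl lemma on $B$ is the delicate point of the whole proof.
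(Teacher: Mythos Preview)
Your overall architecture is right: restrict $A_h$ to $\Omega$, apply local $H$-compactness (Proposition~\ref{prop:Tartar}) to obtain $B_\infty$, extend by $A_0$ to define $A_\infty\in\mathcal M(\lambda,\Lambda,\Omega,A_0)$, extract weak limits $u_h\rightharpoonup u^*$ and $A_h\nabla^s u_h\rightharpoonup m$, and identify $m=A_\infty\nabla^s u^*$ (trivially on $\R^n\setminus\Omega$). You also correctly flag the main obstacle, the boundary mismatch under $I_{1-s}$.

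The gap is in your identification of $m$ on $\Omega$. You set $w_h\coloneqq I_{1-s}u_h$ and invoke ``the local $H$-convergence $A_h|_\Omega\xrightarrow{H}B_\infty$, together with a div-curl/compensated compactness argument'' to conclude $A_h\nabla w_h\rightharpoonup A_\infty\nabla w$. But $w_h$ does not solve any \emph{local} PDE: the constraint you have is $-\div^s(A_h\nabla^s u_h)=f$, a bound on the \emph{nonlocal} divergence, and this does not translate into compactness of $\div(A_h\nabla w_h)$ in $H^{-1}_{\rm loc}$. Without that, neither the div-curl lemma nor Tartar's oscillating test functions apply, and the enlarged-ball device does not help, since freezing coefficients on a collar does not produce the missing local divergence bound.

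The paper circumvents this differently. It does \emph{not} try to make $I_{1-s}u_h$ solve a local problem. Instead it introduces \emph{separate} auxiliary functions $w_h\in H^1_0(\Omega)$ solving the local \emph{transpose} problems $-\div(B_h^T\nabla w_h)=g$ for arbitrary $g\in H^{-1}(\Omega)$; this is precisely where Proposition~\ref{prop:transpose} enters, to guarantee $B_h^T\nabla w_h\rightharpoonup B_\infty^T\nabla w_\infty$. One then computes the crossing quantity
\[
M_h\coloneqq\int_{\R^n}\varphi\, A_h\nabla^s u_h\cdot\nabla w_h\,{\rm d}x,\qquad\varphi\in C_c^\infty(\Omega),
\]
in two ways. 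First, rewrite $\nabla^s u_h=\nabla(I_{1-s}u_h)$ and use that $\varphi I_{1-s}u_h\in H^1_0(\Omega)$ is an admissible test function in the \emph{local} equation for $w_h$; this gives $\lim_h M_h=\int_\Omega\varphi\, A_\infty\nabla^s u^*\cdot\nabla w_\infty$. Second, rewrite $\nabla w_h=\nabla^s((-\Delta)^{(1-s)/2}w_h)$ via Proposition~\ref{prop:fr-Lap}, apply the fractional Leibniz rule (Proposition~\ref{prop:Leibniz}) to $\varphi(-\Delta)^{(1-s)/2}w_h\in H^s_0(\Omega)$, and use the \emph{nonlocal} equation for $u_h$; this gives $\lim_h M_h=\int_\Omega\varphi\, m\cdot\nabla w_\infty$. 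Since $g$ (hence $w_\infty$) is arbitrary, one concludes $m=A_\infty\nabla^s u^*$ on $\Omega$. So the Leibniz rule is indeed used, but on $\varphi(-\Delta)^{(1-s)/2}w_h$ rather than on $u_h$, and the adjoint enters through the auxiliary local problems, not through a nonlocal adjoint.
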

\begin{proof}
For every $h\in\mathbb N$, we define
\begin{equation}\label{eq:localMatrices}
 B_h\coloneqq A_h|_\Omega\in\mathcal M(\lambda,\Lambda,\Omega).
\end{equation}
By Proposition~\ref{prop:Tartar}, there exists a limit matrix
\begin{equation}\label{eq:existenceHlimitLocal}
B_\infty\in\mathcal M(\lambda,\Lambda,\Omega) 
\end{equation}
such that, up to subsequences,
\[
(B_h)_h\text{ $H$-converges to }B_\infty\text{ in }H^1_0(\Omega). 
\]
For a.e.\ $x\in\R^n$ we define 
\begin{align}\label{eq:Ainfty}
 A_\infty(x)\coloneqq
 \begin{cases}
 B_\infty(x)&\text{if $x\in\Omega$},\\
 A_0(x)&\text{if $x\in\R^n\setminus\Omega$}.
\end{cases}
\end{align}
By construction, $A_\infty \in \mathcal M(\lambda,\Lambda,\Omega,A_0)$.
To conclude the proof, we show that $A_\infty$ is the $H$-limit of $(A_h)_h$ in $H^s_0(\Omega)$, in the sense of Definition~\ref{def:Hscon}.

Fix $f\in H^{-s}(\Omega)$ and consider the sequence $(u_h)_h\subset H^s_0(\Omega)$ of unique solutions of $(P_h^f)_h$. 
Since $(u_h)_h$ and $(A_h\nabla^s u_h)_h$ are respectively bounded in $H^s_0(\Omega)$ and $L^2(\R^n;\R^n)$, then there exist $u^*\in H^s_0(\Omega)$ and $m\in L^2(\R^n;\R^n)$ such that, up to a not relabeled subsequence,
\begin{equation}\label{eq:Hweak}
u_h\to u^*\quad\text{weakly in $H^s_0(\Omega)$}\quad\text{and}\quad A_h\nabla^su_h\to m\quad\text{weakly in $L^2(\R^n;\R^n)$ as $h\to\infty$}.
\end{equation}
Our goal is to show that 
\begin{equation}
 \label{eq:keyidentity}
 m=A_\infty\nabla^s u^*\quad\text{a.e.\ in $\R^n$}.
\end{equation}
Indeed, once obtained~\eqref{eq:keyidentity}, by passing to the limit (as $h\to\infty$) in the weak formulation of $(P_h^f)$, we obtain that $u^*$ is a solution of $(P_\infty^f)$.
Thus, by uniqueness, $u^*=u_\infty$ and, by~\eqref{eq:Hweak},
\[
\text{$(A_h)_h$ $H$-converges to $A_\infty$ in $H^s_0(\Omega)$,}
\]
as desired.

First, notice that
\begin{equation}\label{eq:keyidentity1}
m=A_0\nabla^s u^*=A_\infty\nabla^s u^*\quad\text{a.e.\ in $\R^n\setminus\Omega$}.
\end{equation}
In fact, by~\eqref{eq:Hweak}, for all $\Phi\in L^2(\R^n\setminus\Omega;\R^n)$ it holds that
\begin{align*}
\int_{\R^n\setminus\Omega}m(x)\cdot\Phi(x)\,{\rm d}x&=\lim_{h\to\infty}\int_{\R^n\setminus\Omega}A_h(x)\nabla^s u_h(x)\cdot\Phi(x)\,{\rm d}x\\
&=\lim_{h\to\infty}\int_{\R^n\setminus\Omega}A_0(x)\nabla^s u_h(x)\cdot\Phi(x)\,{\rm d}x=\int_{\R^n\setminus\Omega}A_0(x)\nabla^s u^*(x)\cdot\Phi(x)\,{\rm d}x,
\end{align*}
which implies~\eqref{eq:keyidentity1}.

It remains to show that 
\begin{equation}\label{eq:keyidentity2}
m=A_\infty\nabla^s u^*\quad\text{a.e.\ in $\Omega$}.
\end{equation}
Fix $g\in H^{-1}(\Omega)$ and consider $(w_h)_h\subset H^1_0(\Omega)$ and $w_\infty\in H^1_0(\Omega)$, respectively unique solutions of the transpose problems
\begin{equation}\label{eq:def-wh}
\begin{cases}
-\div(B_h^T\nabla w_h)=g&\text{in }\Omega, \\
w_h=0\quad&\text{on }\partial\Omega,
\end{cases}\quad\text{and}\quad
\begin{cases}
-\div(B_\infty^T\nabla w_\infty)=g&\text{in }\Omega, \\
w_\infty=0\quad&\text{on }\partial\Omega.
\end{cases}
\end{equation}
By Proposition~\ref{prop:transpose},
\begin{equation}\label{eq:wh-winfty}
w_h\to w_\infty\text{ weakly in $H^1_0(\Omega)$ and }B_h^T\nabla w_h\to B_\infty^T\nabla w_\infty\text{ weakly in $L^2(\Omega;\R^n)$ as $h\to\infty$}.
\end{equation}

Let $\varphi\in C_c^\infty(\Omega)$ and define
\begin{equation*}
M_h\coloneqq \int_{\R^n}\varphi(x)A_h(x)\nabla^s u_h(x)\cdot \nabla w_h(x)\,{\rm d}x.
\end{equation*}
We claim that for all $\varphi\in C_c^\infty(\Omega)$
\begin{equation}\label{eq:m-identity}
\lim_{h\to\infty}M_h=\int_\Omega \varphi(x) m(x)\cdot\nabla w_\infty(x)\,{\rm d}x=\int_\Omega \varphi(x) A_\infty(x)\nabla^s u^*(x)\cdot\nabla w_\infty(x)\,{\rm d}x.
\end{equation} 
Once the claim is proved, we obtain~\eqref{eq:keyidentity2}. Indeed, since the operator $-\div(B^T_\infty\nabla\,\cdot\,)$ defines a bijection between the spaces $H^1_0(\Omega)$ and $H^{-1}(\Omega)$ and, since $g\in H^{-1}(\Omega)$ can be arbitrarily taken in~\eqref{eq:def-wh}, then for all $\varphi,\psi\in C_c^\infty(\Omega)$ the following identity holds
\begin{equation*}
\int_\Omega \varphi(x) m(x)\cdot \nabla \psi(x) \,{\rm d}x=\int_\Omega \varphi(x) A_\infty(x)\nabla^s u^*(x)\cdot \nabla \psi(x) \,{\rm d}x.
\end{equation*}
Hence, 
\begin{equation}\label{eq:m-identity2}
m(x)\cdot\nabla \psi(x)=A_\infty(x)\nabla^s u^*(x)\cdot\nabla \psi(x)\quad\text{for a.e.\ $x\in\Omega$ and for all $\psi\in C_c^\infty(\Omega)$}, 
\end{equation}
and the collections of points of $\Omega$ where~\eqref{eq:m-identity2} fails can be chosen independent of $\psi$.

Let us fix $\Omega'\Subset\Omega$ and $\phi\in C_c^\infty(\Omega)$, such that $\phi=1$ on $\Omega'$, and define
\[
\psi(x) \coloneqq \phi(x)\xi\cdot x\quad\text{for a.e.\ $x\in\Omega$ and for all $\xi\in\R^n$}.
\]
By~\eqref{eq:m-identity2}, we get that
\begin{equation*}
m(x)\cdot\xi=A_\infty(x)\nabla^s u^*(x)\cdot\xi\quad\text{for a.e.\ $x\in\Omega'$ and for all $\xi\in\R^n$},
\end{equation*}
which implies the validity of~\eqref{eq:keyidentity2} in $\Omega'$.
Moreover, since this is true for every $\Omega'\Subset \Omega$, we get~\eqref{eq:keyidentity2} in all $\Omega$, which completes the proof.

We then conclude by showing the validity of the claim~\eqref{eq:m-identity}.
Its proof is divided into two steps.

\smallskip

\textbf{Step 1.} We first show that
\begin{equation}\label{eq:Mh-1}
\lim_{h\to\infty}M_h=\int_\Omega \varphi(x)A_\infty(x)\nabla^s u^*(x)\cdot\nabla w_\infty(x)\,{\rm d}x.
\end{equation}
In virtue of Proposition~\ref{prop:identities}, we have
\begin{align}
M_h&=\int_\Omega\varphi(x)\nabla^s u_h(x)\cdot B_h^T(x)\nabla w_h(x)\,{\rm d}x\nonumber\\
&=\int_\Omega\varphi(x)\nabla (I_{1-s}u_h)(x)\cdot B_h^T(x)\nabla w_h(x)\,{\rm d}x\nonumber\\
&=\int_\Omega\nabla (\varphi I_{1-s}u_h)(x)\cdot B_h^T(x)\nabla w_h(x)\,{\rm d}x-\int_\Omega I_{1-s}u_h(x)\nabla\varphi(x)\cdot B_h^T(x)\nabla w_h(x)\,{\rm d}x.\label{eq:Mh-1a}
\end{align}
By~\eqref{eq:Hweak} and Proposition~\ref{prop:Rellich},
\begin{equation}\label{eq:uh-rellich}
u_h\to u^*\quad\text{strongly in $L^2(\R^n)$ as $h\to\infty$}, 
\end{equation}
and so, by Proposition~\ref{prop:Is}, we get
\[
I_{1-s}u_h\to I_{1-s}u^*\quad\text{strongly in $L^2(\Omega)$ as $h\to\infty$}.
\]
This last convergence, coupled with~\eqref{eq:wh-winfty}, implies that
\begin{align}\label{eq:Mh-1b}
\lim_{h\to\infty}\int_\Omega I_{1-s}u_h(x)\nabla\varphi(x)\cdot B_h^T(x)\nabla w_h(x)\,{\rm d}x&=\int_\Omega I_{1-s}u^*(x)\nabla\varphi(x)\cdot B_\infty^T(x)\nabla w_\infty(x)\,{\rm d}x.
\end{align}
We observe that $\varphi I_{1-s}u_h\in H^1_0(\Omega)$. Hence, by~\eqref{eq:Hweak},~\eqref{eq:uh-rellich}, and Proposition~\ref{prop:Riesz},
\[
\varphi I_{1-s}u_h\to \varphi I_{1-s}u^*\quad\text{weakly in $H^1_0(\Omega)$ as $h\to\infty$}.
\]
Thus, since $w_h$ solves problem~\eqref{eq:def-wh}, we get
\begin{align}
\lim_{h\to\infty}\int_\Omega\nabla (\varphi I_{1-s}u_h)(x)\cdot B_h^T(x)\nabla w_h(x)\,{\rm d}x&=\lim_{h\to\infty}\langle g,\varphi I_{1-s}u_h\rangle_{H^{-1}(\Omega)\times H^1_0(\Omega)}\nonumber\\
&=\langle g,\varphi I_{1-s}u^*\rangle_{H^{-1}(\Omega)\times H^1_0(\Omega)}\nonumber\\
&=\int_\Omega\nabla (\varphi I_{1-s}u^*)(x)\cdot B_\infty^T(x)\nabla w_\infty(x)\,{\rm d}x.\label{eq:Mh-1c}
\end{align}
By combining~\eqref{eq:Mh-1a},~\eqref{eq:Mh-1b}, and~\eqref{eq:Mh-1c}, we then obtain~\eqref{eq:Mh-1}, being
\begin{align*}
\lim_{h\to\infty}M_h&=\int_\Omega\nabla (\varphi I_{1-s}u^*)(x)\cdot B_\infty^T(x)\nabla w_\infty(x)\,{\rm d}x-\int_\Omega I_{1-s}u^*(x)\nabla\varphi(x)\cdot B_\infty^T(x)\nabla w_\infty(x)\,{\rm d}x\\
&=\int_\Omega \varphi(x)\nabla(I_{1-s}u^*)(x)\cdot B_\infty^T(x)\nabla w_\infty(x)\,{\rm d}x=\int_\Omega \varphi(x)A_\infty(x)\nabla^s u^*(x)\cdot\nabla w_\infty(x)\,{\rm d}x.
\end{align*}

\smallskip

\textbf{Step 2.} We conclude by showing that
\begin{equation}\label{eq:Mh-2}
\lim_{h\to\infty}M_h=\int_\Omega \varphi(x) m(x)\cdot\nabla w_\infty(x)\,{\rm d}x.
\end{equation}
By Proposition~\ref{prop:fr-Lap} and Proposition~\ref{prop:Leibniz}, we have
\begin{align}
M_h&=\int_\Omega\varphi(x)A_h(x)\nabla^s u_h(x)\cdot\nabla w_h(x)\,{\rm d}x\nonumber\\
&=\int_{\R^n}\varphi(x)A_h(x)\nabla^s u_h(x)\cdot\nabla^s((-\Delta)^\frac{1-s}{2}w_h)(x)\,{\rm d}x\nonumber\\
&=\int_{\R^n} A_h(x)\nabla^s u_h(x)\cdot\nabla^s(\varphi(-\Delta)^\frac{1-s}{2}w_h)(x)\,{\rm d}x\label{eq:Mh-2a}\\
&\quad-\int_{\R^n} A_h(x)\nabla^s u_h(x)\cdot\nabla^s\varphi(x) (-\Delta)^{\frac{1-s}{2}}w_h(x)\,{\rm d}x\nonumber\\
&\quad-\int_{\R^n} A_h(x)\nabla^s u_h(x)\cdot\nabla^s_{\rm NL}(\varphi,(-\Delta)^\frac{1-s}{2}w_h)(x)\,{\rm d}x.\nonumber
\end{align}
For what concerns the second integral in~\eqref{eq:Mh-2a}, in view of~\eqref{eq:wh-winfty} and Proposition~\ref{prop:Rellich}, we have
\[
w_h\to w_\infty\quad\text{strongly in $L^2(\R^n)$ as $h\to\infty$}.
\]
Moreover, the sequence $(w_h)_h\subset H^1_0(\Omega)$ is uniformly bounded. Hence, by Proposition~\ref{prop:fr-Lap},
\begin{equation}\label{eq:wh-strong1}
(-\Delta)^\frac{1-s}{2}w_h\to (-\Delta)^\frac{1-s}{2}w_\infty\quad\text{strongly in $L^2(\R^n)$}
\end{equation}
and, since $\nabla^s\varphi\in L^\infty(\R^n)$, by~\eqref{eq:Hweak} we get
\begin{align}
&\lim_{h\to\infty}\int_{\R^n} A_h(x)\nabla^s u_h(x)\cdot\nabla^s\varphi(x) (-\Delta)^{\frac{1-s}{2}}w_h(x)\,{\rm d}x
=\int_{\R^n} m(x)\cdot\nabla^s\varphi(x) (-\Delta)^{\frac{1-s}{2}}w_\infty(x)\,{\rm d}x.\label{eq:Mh-2b}
\end{align}
Regarding the third integral in~\eqref{eq:Mh-2a}, by Proposition~\ref{prop:Leibniz} and~\eqref{eq:wh-strong1}, since $\nabla^s_{\rm NL}$ is a bilinear operator, we deduce that
\[
\nabla^s_{\rm NL}(\varphi,(-\Delta)^\frac{1-s}{2}w_h)\to \nabla^s_{\rm NL}(\varphi,(-\Delta)^\frac{1-s}{2}w_\infty)\quad\text{strongly in $L^2(\R^n;\R^n)$ as $h\to\infty$}.
\]
Therefore, in virtue of~\eqref{eq:Hweak}, we have
\begin{align}
 &\lim_{h\to\infty}\int_{\R^n} A_h(x)\nabla^s u_h(x)\cdot\nabla^s_{\rm NL}(\varphi,(-\Delta)^\frac{1-s}{2}w_h)(x)\,{\rm d}x\nonumber \\&
 =\int_{\R^n} m(x)\cdot\nabla^s_{\rm NL}(\varphi,(-\Delta)^\frac{1-s}{2}w_\infty)(x)\,{\rm d}x.\label{eq:Mh-2c}
\end{align}
Finally, $(-\Delta)^\frac{1-s}{2}w_h\in H^s(\R^n)$ by Proposition~\ref{prop:fr-Lap}, which implies that $\varphi (-\Delta)^\frac{1-s}{2}w_h\in H^s_0(\Omega)$.
Then,
\[
\varphi (-\Delta)^\frac{1-s}{2}w_h\rightarrow \varphi (-\Delta)^\frac{1-s}{2}w_\infty\quad\text{weakly in $H^s_0(\Omega)$ as $h\to\infty$}.
\]
Therefore, regarding the first integrals in~\eqref{eq:Mh-2a}, since $u_h$ is a solution of $(P_h^f)$ then, by~\eqref{eq:Hweak},
\begin{align}
&\lim_{h\to\infty}\int_{\R^n} A_h(x)\nabla^s u_h(x)\cdot\nabla^s(\varphi(-\Delta)^\frac{1-s}{2}w_h)(x)\,{\rm d}x\nonumber\\
&=\lim_{h\to\infty}\langle f,\varphi(-\Delta)^\frac{1-s}{2}w_h\rangle_{H^{-s}(\Omega)\times H^s_0(\Omega)}\nonumber\\
&=\langle f,\varphi(-\Delta)^\frac{1-s}{2}w_\infty\rangle_{H^{-s}(\Omega)\times H^s_0(\Omega)}\nonumber\\
&=\lim_{h\to\infty}\int_{\R^n} A_h(x)\nabla^s u_h(x)\cdot\nabla^s(\varphi(-\Delta)^\frac{1-s}{2}w_\infty)(x)\,{\rm d}x\nonumber\\
&=\int_{\R^n} m(x)\cdot\nabla^s(\varphi(-\Delta)^\frac{1-s}{2}w_\infty)(x)\,{\rm d}x.\label{eq:Mh-2d}
\end{align}
By combining~\eqref{eq:Mh-2a},~\eqref{eq:Mh-2b},~\eqref{eq:Mh-2c}, and~\eqref{eq:Mh-2d}, we finally get~\eqref{eq:Mh-2}, being
\begin{align*}
\lim_{h\to\infty}M_h&=
\int_{\R^n} m(x)\cdot\nabla^s(\varphi(-\Delta)^\frac{1-s}{2}w_\infty)(x)\,{\rm d}x\\
&\quad- \int_{\R^n} m(x)\cdot\nabla^s\varphi(x) (-\Delta)^{\frac{1-s}{2}}w_\infty(x)\,{\rm d}x-\int_{\R^n} m(x)\cdot\nabla^s_{\rm NL}(\varphi,(-\Delta)^\frac{1-s}{2}w_\infty)(x)\,{\rm d}x\\
&= \int_{\R^n} \varphi(x) m(x)\cdot\nabla^s((-\Delta)^\frac{1-s}{2}w_\infty)(x)\,{\rm d}x=\int_\Omega \varphi(x) m(x)\cdot\nabla w_\infty(x)\,{\rm d}x.
\end{align*}
Hence, the claim~\eqref{eq:m-identity} holds true and the proof of the theorem is accomplished.
\end{proof}
 
\begin{rmk}
The main application of the $H$-convergence of local linear elliptic operators lies in the periodic homogenisation of operators of the form
\begin{equation}
\label{eq:localhomo}
-\div(a(hx)\nabla u(x)),\quad h\in\mathbb N,
\end{equation}
where $a\in L^\infty(\R^n)$ is $1$-periodic, and satisfies $a\ge\lambda>0$ a.e.\ in $\R^n$. 
We expect that, in the fractional counterpart of~\eqref{eq:localhomo}, the assumption of fixing the function $a$ outside the reference domain $\Omega$, which is required in Theorem~\ref{thm:compactness}, can be dropped.
\end{rmk}
 
We point out that, as in the local scenario, in the class $\mathcal M(\lambda,\Lambda,\Omega,A_0)$ the $H$-limit $A_\infty$ is unique.

\begin{lem}\label{lem:uniquenessgeneral}
Let $A_0\in \mathcal M(\lambda,\Lambda,\R^n)$. Assume that $(A_h)_h\subset\mathcal M(\lambda,\Lambda,\Omega,A_0)$ and $A_\infty,\widehat A_\infty \in \mathcal M(\lambda,\Lambda,\Omega,A_0)$ satisfy
\begin{equation}\label{eq:H-A}
\text{$(A_h)_h$ $H$-converges to $A_\infty$ in $H^s_0(\Omega)$}
\end{equation}
and
\begin{equation}\label{eq:H-hatA}
\text{$(A_h)_h$ $H$-converges to $\widehat A_\infty$ in $H^s_0(\Omega)$}.
\end{equation}
Then,
\begin{equation}\label{eq:H-uniq}
A_\infty(x)=\widehat A_\infty(x)\quad\text{for a.e.\ $x\in\R^n$}.
\end{equation}
\end{lem}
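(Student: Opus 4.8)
The plan is to use the definition of nonlocal $H$-convergence to reduce the statement to a uniqueness property for the bilinear form attached to the limiting $L^\infty$-coefficient, and then to prove that property by a Lebesgue-point argument based on rescaled test functions.

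First, since $A_\infty=\widehat A_\infty=A_0$ a.e.\ in $\R^n\setminus\Omega$, it suffices to show $A_\infty=\widehat A_\infty$ a.e.\ in $\Omega$. I would fix $f\in H^{-s}(\Omega)$ and let $u_h=u_h(f)\in H^s_0(\Omega)$ solve $(P_h^f)$. By~\eqref{eq:H-A} the sequence $(u_h)_h$ converges weakly in $H^s_0(\Omega)$ to the solution of the limit problem for $A_\infty$, and by~\eqref{eq:H-hatA} also to the one for $\widehat A_\infty$; hence these coincide, call the common value $u=u(f)$, and likewise $(A_h\nabla^su_h)_h$ converges weakly in $L^2(\R^n;\R^n)$ to both $A_\infty\nabla^su$ and $\widehat A_\infty\nabla^su$. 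Setting $D\coloneqq A_\infty-\widehat A_\infty$, which lies in $L^\infty(\R^n;\R^{n\times n})$ (bounded by $2\Lambda$ via~\eqref{eq:B-Lambda}) and vanishes outside $\Omega$, one gets $D\,\nabla^su(f)=0$ a.e.\ in $\R^n$ for every $f\in H^{-s}(\Omega)$. Since by Lemma~\ref{lem:LaxMilgram} the solution map $f\mapsto u(f)$ for $A_\infty$ is onto $H^s_0(\Omega)$ (given $v\in H^s_0(\Omega)$, take $f=-\div^s(A_\infty\nabla^sv)$), this becomes
\[
D(x)\,\nabla^sv(x)=0\qquad\text{for a.e.\ }x\in\R^n\text{ and every }v\in H^s_0(\Omega).
\]
It then remains to prove that this forces $D=0$ a.e.\ in $\Omega$—the nonlocal counterpart of the uniqueness of the integral representation of the local $H$-limit.

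To establish the latter I would fix $\xi\in\R^n$ and a Lebesgue point $x_0\in\Omega$ of $D$, choose $\delta_0>0$ with $B(x_0,2\delta_0)\subset\Omega$, cut-offs $\rho\in C^\infty_c(B(x_0,2\delta_0))$ with $\rho\equiv1$ on $B(x_0,\delta_0)$ and $\chi\in C^\infty_c(B(0,2))$ with $\chi\equiv1$ on $B(0,1)$, and, for $\varepsilon\in(0,\delta_0)$ and $B_\varepsilon\coloneqq B(x_0,\varepsilon)$, set
\[
w_\varepsilon(x)\coloneqq\varepsilon\,w_0\!\Bigl(\tfrac{x-x_0}{\varepsilon}\Bigr),\quad w_0(z)\coloneqq\chi(z)\,(\xi\cdot z),\qquad u_\varepsilon\coloneqq(-\Delta)^{\frac{1-s}{2}}w_\varepsilon.
\]
Then $w_\varepsilon\in C^\infty_c(B(x_0,2\varepsilon))$ with $\nabla w_\varepsilon=\xi$ on $B_\varepsilon$, so Proposition~\ref{prop:fr-Lap} yields $u_\varepsilon\in H^s(\R^n)$ with $\nabla^su_\varepsilon=\xi$ on $B_\varepsilon$, while a change of variables together with~\eqref{eq:interpolationBCCS} gives $\|u_\varepsilon\|_{L^2(\R^n)}\le C_0\,\varepsilon^{n/2+s}$ (using $\|w_\varepsilon\|_{L^2}=\varepsilon^{(n+2)/2}\|w_0\|_{L^2}$ and $\|\nabla w_\varepsilon\|_{L^2}=\varepsilon^{n/2}\|\nabla w_0\|_{L^2}$). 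By the Leibniz rule of Proposition~\ref{prop:Leibniz}, $\rho u_\varepsilon\in H^s_0(\Omega)$ is an admissible test function and
\[
\nabla^s(\rho u_\varepsilon)=\xi+r_\varepsilon\ \ \text{a.e.\ on }B_\varepsilon,\qquad r_\varepsilon\coloneqq u_\varepsilon\,\nabla^s\rho+\nabla^s_{\rm NL}(\rho,u_\varepsilon),
\]
with $\|r_\varepsilon\|_{L^2(B_\varepsilon;\R^n)}\le C_1\varepsilon^{n/2+s}$, because $\nabla^s\rho\in L^\infty$ and the remainder estimate in Proposition~\ref{prop:Leibniz} bounds $\nabla^s_{\rm NL}(\rho,u_\varepsilon)$ in $L^2(\R^n;\R^n)$ by a multiple of $\|u_\varepsilon\|_{L^2(\R^n)}$.

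To conclude I would plug $v=\rho u_\varepsilon$ into $D\,\nabla^sv=0$ and average over $B_\varepsilon$: from $D\,(\xi+r_\varepsilon)=0$ a.e.\ on $B_\varepsilon$ one gets $|B_\varepsilon|^{-1}\!\int_{B_\varepsilon}D\,\xi\,{\rm d}x=-|B_\varepsilon|^{-1}\!\int_{B_\varepsilon}D\,r_\varepsilon\,{\rm d}x$, whose left-hand side tends to $D(x_0)\,\xi$ (as $x_0$ is a Lebesgue point) while the right-hand side is, by Cauchy--Schwarz, $O\bigl(|B_\varepsilon|^{-1/2}\|r_\varepsilon\|_{L^2(B_\varepsilon)}\bigr)=O(\varepsilon^{s})\to0$; hence $D(x_0)\,\xi=0$, and since $\xi$ is arbitrary and a.e.\ point of $\Omega$ is a Lebesgue point of $D$, we obtain $D=0$ a.e.\ in $\Omega$, that is~\eqref{eq:H-uniq}. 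I expect the main obstacle to be precisely this construction of localized test functions: neither the Riesz potential hidden in $(-\Delta)^{\frac{1-s}{2}}$ nor the nonlocal term $\nabla^s_{\rm NL}$ respects the homogeneous Dirichlet condition or is local, so $\nabla^su$ cannot simply be prescribed to equal $\xi$ on a fixed subdomain; the role of the rescaling is to drive these two genuinely nonlocal contributions down to order $\varepsilon^{n/2+s}$, which is negligible against the $\varepsilon^{n/2}$ size of $\xi$ on $B_\varepsilon$ and so disappears in the average.
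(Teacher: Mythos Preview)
Your proof is correct and shares the same opening reduction as the paper: both arguments use that $A_\infty=\widehat A_\infty=A_0$ outside $\Omega$, that the two $H$-limits force both $u_\infty=\widehat u_\infty$ and $A_\infty\nabla^s u_\infty=\widehat A_\infty\nabla^s u_\infty$, and that the surjectivity of $-\div^s(A_\infty\nabla^s\cdot)$ onto $H^{-s}(\Omega)$ upgrades this to $D\nabla^s v=0$ a.e.\ for every test function $v$. The divergence is in how one then extracts $D(x_0)\xi=0$ from this. The paper simply invokes \cite[Lemma~4.3]{KrSc22}, which furnishes $\varphi_1,\dots,\varphi_n\in C_c^\infty(B_r(x_0))$ with $\nabla^s\varphi_i(x_0)=e_i$ exactly, so that plugging in $\psi=\varphi_i$ at a point where~\eqref{eq:H-identity} holds yields $A_\infty(x_0)e_i=\widehat A_\infty(x_0)e_i$ directly. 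Your route is instead self-contained: you build $\rho u_\varepsilon\in H^s_0(\Omega)$ via $(-\Delta)^{(1-s)/2}$ of a rescaled affine-times-cutoff, so that $\nabla^s(\rho u_\varepsilon)=\xi+r_\varepsilon$ on $B_\varepsilon$ only approximately, and then kill $r_\varepsilon$ by averaging and using the $\varepsilon^{n/2+s}$ scaling from~\eqref{eq:interpolationBCCS} and Proposition~\ref{prop:Leibniz}. The paper's approach is shorter but outsources the nonlocal construction; yours uses only tools already assembled in Section~\ref{sec:prel}, and the averaging step also sidesteps the need to argue (as the paper does) that the exceptional set in $D\nabla^s\psi=0$ can be taken independent of $\psi$.
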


\begin{proof}
By definition
\[
A_\infty(x)=A_0(x)=\widehat A_\infty(x)\quad\text{for a.e.\ $x\in\R^n\setminus\Omega$}.
\]
Hence, it remains to show that 
\[
A_\infty(x)=\widehat A_\infty(x)\quad\text{for a.e.\ $x\in\Omega$}.
\]

Let $\{{\rm e}_1,\dots, {\rm e}_n\}$ be the canonical orthonormal basis in $\R^n$. By~\cite[Lemma~4.3]{KrSc22}, for all $r>0$ and for all $x_0\in\R^n$ we can find $n$ functions $\varphi_1,\dots\varphi_n\in C_c^\infty(B_r(x_0))$ such that 
\begin{equation}\label{eq:varphi_i}
\nabla^s\varphi_i(x_0)={\rm e}_i\quad\text{for all $i\in\{1,\dots,n\}$}.
\end{equation}
Let $f\in H^{-s}(\Omega)$ be fixed and let $u_\infty=u_\infty(f)\in H^s_0(\Omega)$ and $\widehat u_\infty=\widehat u_\infty(f)\in H^s_0(\Omega)$ be, respectively, the unique solutions of 
\begin{equation*}
\begin{cases}
-\div^s(A_\infty\nabla^s u_\infty)=f&\text{in }\Omega, \\
u_\infty=0\quad&\text{in }\R^n\setminus\Omega,
\end{cases}\qquad \begin{cases}
-\div^s(\widehat A_\infty\nabla^s \widehat u_\infty)=f&\text{in }\Omega, \\
\widehat u_\infty=0\quad&\text{in }\R^n\setminus\Omega.
\end{cases}
\end{equation*}
By~\eqref{eq:Hscon1} and~\eqref{eq:H-A}--\eqref{eq:H-hatA}, we derive that $u_\infty=\widehat u_\infty$ and, by~\eqref{eq:Hscon2}, for all $\Phi\in L^2(\R^n;\R^n)$
\[
\int_{\R^n}A_\infty(x)\nabla u_\infty(x)\cdot\Phi(x)\,{\rm d} x=\int_{\R^n}\widehat A_\infty(x)\nabla u_\infty(x)\cdot \Phi(x)\,{\rm d} x.
\]
Since the operator $-\div^s(A_\infty\nabla^s\,\cdot\,)=-\div^s(\widehat A_\infty\nabla^s\,\cdot\,)$ defines a bijection between the spaces $H^s_0(\Omega)$ and $H^{-s}(\Omega)$ and, since $f\in H^{-s}(\Omega)$ can be arbitrarily chosen, we derive that for all $\psi\in C_c^\infty(\Omega)$ and for all $\Phi\in L^2(\R^n;\R^n)$
\[
\int_{\R^n}A_\infty(x)\nabla^s \psi(x)\cdot\Phi(x)\,{\rm d} x=\int_{\R^n}\widehat A_\infty(x)\nabla^s \psi(x)\cdot \Phi(x)\,{\rm d} x.
\]
Hence, 
\begin{equation}\label{eq:H-identity}
A_\infty(x)\nabla^s \psi(x)=\widehat A_\infty(x)\nabla^s \psi(x)\quad\text{for a.e.\ $x\in\Omega$ and for all $\psi\in C_c^\infty(\Omega)$},
\end{equation}
and the collections of points of $\Omega$ where~\eqref{eq:H-identity} fails can be chosen independent of $\psi$. 

Let now $x_0\in \Omega$ and $r>0$ be such that $B_r(x_0)\subset \Omega$. By choosing $\psi=\varphi_i$ for all $i\in\{1,\dots,n\}$, where $\varphi_1,\dots,\varphi_n\in C^\infty_c(B_r(x_0))$ are the functions satisfying~\eqref{eq:varphi_i}, we derive that 
\[
A_\infty(x_0){\rm e}_i=\widehat A_\infty(x_0){\rm e}_i\quad\text{for a.e.\ $x_0\in\Omega$ and for all $i\in\{1,\dots,n\}$}.
\]
Hence $A_\infty(x)=\widehat A_\infty(x)$ for a.e.\ $x\in\Omega$, which gives~\eqref{eq:H-uniq}.
\end{proof}
 
As a consequence of Proposition~\ref{prop:transpose}, the $H$-compactness Theorem~\ref{thm:compactness} also applies to the subclass of symmetric matrices $\mathcal M^{\rm sym}(\lambda,\Lambda,\Omega,A_0)$, for a fixed $A_0\in\mathcal M^{\rm sym}(\lambda,\Lambda,\R^n)$.
More precisely, the $H$-limit $A_\infty$ of sequences $(A_h)_h$ in $\mathcal M^{\rm sym}(\lambda,\Lambda,\Omega,A_0)$ still lies in $\mathcal M^{\rm sym}(\lambda,\Lambda,\Omega,A_0)$.

\begin{theorem}\label{thm:sym-compact}
Let $A_0\in\mathcal M^{\rm sym}(\lambda,\Lambda,\R^n)$.
For every $(A_h)_h\subset\mathcal M^{\rm sym}(\lambda,\Lambda,\Omega,A_0)$, there exist a not relabeled subsequence and a matrix-valued function $A_\infty\in\mathcal M^{\rm sym}(\lambda,\Lambda,\Omega,A_0)$ such that
\[
\text{$(A_h)_h$ $H$-converges to $A_\infty$ in $H^s_0(\Omega)$}.
\]
\end{theorem}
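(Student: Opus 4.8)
The plan is to deduce Theorem~\ref{thm:sym-compact} from Theorem~\ref{thm:compactness} by showing that the $H$-limit constructed there preserves symmetry when the approximating sequence is symmetric. Concretely, given $(A_h)_h\subset\mathcal M^{\rm sym}(\lambda,\Lambda,\Omega,A_0)$ with $A_0$ symmetric, I would first apply Theorem~\ref{thm:compactness}: up to a not relabeled subsequence there exists $A_\infty\in\mathcal M(\lambda,\Lambda,\Omega,A_0)$ such that $(A_h)_h$ $H$-converges to $A_\infty$ in $H^s_0(\Omega)$. It remains only to prove that $A_\infty=A_\infty^T$ a.e.\ in $\R^n$. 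Outside $\Omega$ this is immediate, since $A_\infty=A_0=A_0^T$ a.e.\ in $\R^n\setminus\Omega$. Inside $\Omega$, I would invoke the proof of Theorem~\ref{thm:compactness}: there $A_\infty$ is defined on $\Omega$ as the local $H$-limit $B_\infty$ of the restrictions $B_h\coloneqq A_h|_\Omega$, which by~\eqref{eq:localMatrices} lie in $\mathcal M(\lambda,\Lambda,\Omega)$; but since each $A_h$ is symmetric, each $B_h$ is symmetric as well, so $B_h=B_h^T$ a.e.\ in $\Omega$.

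At this point the key tool is Proposition~\ref{prop:transpose}: since $(B_h)_h$ $H$-converges to $B_\infty$ in $H^1_0(\Omega)$, the transposed sequence $(B_h^T)_h$ $H$-converges to $B_\infty^T$ in $H^1_0(\Omega)$. But $B_h^T=B_h$ for every $h$, so $(B_h)_h$ also $H$-converges to $B_\infty^T$. By the uniqueness of the local $H$-limit (a classical fact, contained in Proposition~\ref{prop:Tartar} together with its standard uniqueness complement; alternatively one may invoke Lemma~\ref{lem:uniquenessgeneral} applied to the constant extension $A_0$ equal to, say, the identity outside $\Omega$, after extending $B_h$ and $B_\infty$ trivially — but the cleanest route is the local uniqueness statement), we conclude $B_\infty=B_\infty^T$ a.e.\ in $\Omega$. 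Hence $A_\infty=A_\infty^T$ a.e.\ in $\R^n$, and together with $A_\infty\in\mathcal M(\lambda,\Lambda,\Omega,A_0)$ this gives $A_\infty\in\mathcal M^{\rm sym}(\lambda,\Lambda,\Omega,A_0)$, completing the proof.

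The argument is essentially a bookkeeping reduction, so there is no serious obstacle; the only point requiring care is that one must use the \emph{uniqueness} of the local $H$-limit, which is not literally stated as a separate proposition in the excerpt but is part of the classical $H$-compactness package (and is mirrored by Lemma~\ref{lem:uniquenessgeneral} in the nonlocal setting). If one prefers to stay self-contained within the stated results, one can instead argue at the nonlocal level directly: Theorem~\ref{thm:compactness} applied to $(A_h)_h$ and to $(A_h^T)_h=(A_h)_h$ produces $H$-limits $A_\infty$ and $(A^T)_\infty$, and since these two sequences coincide, Lemma~\ref{lem:uniquenessgeneral} forces $A_\infty=(A^T)_\infty$; it then remains to check that the $H$-limit of $(A_h^T)_h$ equals $A_\infty^T$, which follows from combining the construction in Theorem~\ref{thm:compactness} (where the limit inside $\Omega$ is the local $H$-limit of the restrictions) with Proposition~\ref{prop:transpose} applied to $(B_h)_h$. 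Either way the symmetry of $A_\infty$ is obtained, and the membership in $\mathcal M^{\rm sym}(\lambda,\Lambda,\Omega,A_0)$ follows at once.
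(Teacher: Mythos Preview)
Your proposal is correct and follows essentially the same approach as the paper: both argue that the construction in Theorem~\ref{thm:compactness} yields a symmetric $A_\infty$ because the local $H$-limit $B_\infty$ of the symmetric restrictions $B_h=A_h|_\Omega$ is symmetric, which in turn follows from Proposition~\ref{prop:transpose} together with the uniqueness of the local $H$-limit. The paper's proof is terser but relies on exactly the same ingredients you identify.
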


\begin{proof}
The proof of the theorem in the symmetric case follows verbatim the construction already presented in the proof of Theorem~\ref{thm:compactness} for the general case, combined with the following observations.
The limit matrix $B_\infty$ in the local $H$-convergence of the sequence $(B_h)_h$, introduced in~\eqref{eq:localMatrices}, whose existence is shown in~\eqref{eq:existenceHlimitLocal}, is now symmetric in view of Proposition~\ref{prop:transpose} and by the uniqueness of the $H$-limit. 
This implies that the matrix $A_\infty$, defined in~\eqref{eq:Ainfty}, belongs to the class $\mathcal M^{\rm sym}(\lambda,\Lambda,\Omega,A_0)$, and it is the $H$-limit of $(A_h)_h$, in virtue of Theorem~\ref{thm:compactness}. 
\end{proof}


\section{\texorpdfstring{$\Gamma$}{Gamma}-compactness of nonlocal energies}\label{sec:Gamma-comp}

The goal of this section is to show the following $\Gamma$-compactness theorem.

\begin{theorem}\label{thm:Gammacompactness}
Let $A_0\in\mathcal M^{\rm sym}(\lambda,\Lambda,\R^n)$. Let $(A_h)_h\subset\mathcal M^{\rm sym}(\lambda,\Lambda,\Omega,A_0)$ and let $(F_h)_h$ be the nonlocal energies introduced in~\eqref{eq:Fh}.
Then, there exist a not relabeled subsequence of $(A_h)_h$ and $A_\infty\in\mathcal M^{\rm sym}(\lambda,\Lambda,\Omega,A_0)$ such that 
\[
\text{$(F_h)_h$ $\Gamma$-converges to $F_\infty$ strongly in $L^2(\R^n)$}, 
\]
where $F_\infty\colon L^2(\R^n)\to [0,\infty]$ is the nonlocal energy associated with $A_\infty$, as in~\eqref{eq:Finfty}.
\end{theorem}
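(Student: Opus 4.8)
The plan is to transfer the problem to the local setting through the Riesz potential, as in the diagram following the introduction. The key identity is that, for $u\in H^s_0(\Omega)$, the function $w:=I_{1-s}u$ satisfies $\nabla w=\nabla^s u$ a.e.\ in $\R^n$ and $w|_\Omega\in H^1(\Omega)$ by Proposition~\ref{prop:Riesz}; hence, using $A_h=A_0$ in $\R^n\setminus\Omega$,
\begin{equation*}
F_h(u)=\underbrace{\frac12\int_\Omega A_h(x)\nabla w(x)\cdot\nabla w(x)\,{\rm d}x}_{=\,G_h(w|_\Omega)}+\frac12\int_{\R^n\setminus\Omega}A_0(x)\nabla^s u(x)\cdot\nabla^s u(x)\,{\rm d}x,
\end{equation*}
where $G_h$ is the local energy associated with $B_h:=A_h|_\Omega\in\mathcal M^{\rm sym}(\lambda,\Lambda,\Omega)$ as in~\eqref{eq:Gh}, and the second term is an $h$-independent continuous quadratic form of $u$ on $H^s_0(\Omega)$. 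Thus the whole $h$-dependence of $F_h$ is encoded in $G_h$. First I would apply Proposition~\ref{prop:DalMaso} to $(B_h)_h$: along a not relabeled subsequence, $(G_h)_h$ $\Gamma$-converges in $L^2(\Omega)$ to $G_\infty$, represented as in~\eqref{eq:Ginfty} through some $B_\infty\in\mathcal M^{\rm sym}(\lambda,\Lambda,\Omega)$. I then set $A_\infty:=B_\infty$ on $\Omega$ and $A_\infty:=A_0$ on $\R^n\setminus\Omega$, so that $A_\infty\in\mathcal M^{\rm sym}(\lambda,\Lambda,\Omega,A_0)$ and $F_\infty$ in~\eqref{eq:Finfty} is its associated nonlocal energy. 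The conclusion then reduces to showing that the $\Gamma$-convergence $G_h\to G_\infty$ forces the $\Gamma$-convergence $F_h\to F_\infty$ (this is one direction of Proposition~\ref{prop:loc-nonloc}); below I sketch it.

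For the $\Gamma$-\textbf{liminf} inequality, let $u_h\to u$ strongly in $L^2(\R^n)$ with $\liminf_h F_h(u_h)<\infty$. Passing to a subsequence realizing the liminf and using $F_h(u_h)\ge\frac{\lambda}{2}\|\nabla^s u_h\|^2_{L^2(\R^n;\R^n)}$ together with Proposition~\ref{prop:Poincare}, the sequence $(u_h)_h$ is bounded in $H^s_0(\Omega)$; by Proposition~\ref{prop:Rellich} we get $u\in H^s_0(\Omega)$ and $\nabla^s u_h\rightharpoonup\nabla^s u$ weakly in $L^2(\R^n;\R^n)$. Setting $w_h:=I_{1-s}u_h$, Propositions~\ref{prop:Riesz} and~\ref{prop:Is} yield $\nabla w_h=\nabla^s u_h$, boundedness of $(w_h)_h$ in $H^1(O)$ for a bounded open set $O\supset\overline\Omega$, and $w_h|_\Omega\to w|_\Omega$ strongly in $L^2(\Omega)$ with $w:=I_{1-s}u$. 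Splitting $F_h(u_h)=G_h(w_h|_\Omega)+\frac12\int_{\R^n\setminus\Omega}A_0\nabla^s u_h\cdot\nabla^s u_h\,{\rm d}x$, the $\Gamma$-liminf inequality for $(G_h)_h$ bounds the first term below by $G_\infty(w|_\Omega)=\frac12\int_\Omega B_\infty\nabla^s u\cdot\nabla^s u\,{\rm d}x$, while the weak lower semicontinuity of the convex functional $\Phi\mapsto\int_{\R^n\setminus\Omega}A_0(x)\Phi(x)\cdot\Phi(x)\,{\rm d}x$ on $L^2(\R^n\setminus\Omega;\R^n)$ (recall $A_0$ is symmetric and coercive) bounds the second term below by $\frac12\int_{\R^n\setminus\Omega}A_0\nabla^s u\cdot\nabla^s u\,{\rm d}x$. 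Adding the two contributions gives precisely $F_\infty(u)$.

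For the $\Gamma$-\textbf{limsup} inequality I would fix $u\in H^s_0(\Omega)$ (the case $u\notin H^s_0(\Omega)$ being trivial), set $w:=I_{1-s}u$, and take a recovery sequence $(v_h)_h\subset H^1(\Omega)$ for $(G_h)_h$ at $w|_\Omega$, which is automatically bounded in $H^1(\Omega)$. The point is to lift $(v_h)_h$ back to admissible competitors $u_h\in H^s_0(\Omega)$: after extending $v_h$ to $\R^n$, one sets $\widetilde u_h:=(-\Delta)^{\frac{1-s}{2}}(\text{extension of }v_h)\in H^s(\R^n)$, so that $\nabla^s\widetilde u_h$ equals the gradient of the extension by Proposition~\ref{prop:fr-Lap}, and then restores the homogeneous exterior condition by multiplying by a cutoff $\varphi\in C_c^\infty(\Omega)$, controlling $\nabla^s(\varphi\widetilde u_h)$ through the fractional Leibniz rule (Proposition~\ref{prop:Leibniz}); the lower-order remainders $\widetilde u_h\nabla^s\varphi$ and $\nabla^s_{\rm NL}(\varphi,\widetilde u_h)$ are estimated in $L^2$ by the interpolation inequality~\eqref{eq:interpolationBCCS} and are made negligible through a diagonal argument letting the cutoffs converge to $1$ on $\Omega$. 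To certify that the $\Gamma$-limit so obtained is genuinely $F_\infty$, and not some other, truly nonlocal, quadratic functional with the same local behaviour, I would invoke the uniqueness of the integral representation of nonlocal energies in terms of $L^\infty$-coefficients, Lemma~\ref{lem:uniqueness}, which forces the representing matrix to coincide with $B_\infty$ on $\Omega$ and with $A_0$ outside, i.e.\ with $A_\infty$; since $B_\infty$ is the unique local $\Gamma$-limit along the chosen subsequence, a standard Urysohn-type argument then upgrades the convergence to the whole sequence.

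I expect the $\Gamma$-limsup to be the main obstacle. The classical localisation arguments of $H$-convergence theory are not available, because $\nabla^s$ and $\div^s$ are nonlocal and the Riesz potential $I_{1-s}$ does not preserve the Dirichlet exterior condition; the interplay of the fractional Leibniz rule, the compactness of $H^s_0(\Omega)$, and the uniqueness Lemma~\ref{lem:uniqueness} is exactly what bypasses this difficulty, and keeping all the error terms under control along the diagonal procedure is where the real work lies.
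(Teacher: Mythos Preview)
Your overall strategy is the same as the paper's: restrict to $B_h:=A_h|_\Omega$, apply Proposition~\ref{prop:DalMaso} to extract a subsequence along which $(G_h)_h$ $\Gamma$-converges to $G_\infty$ represented by some $B_\infty\in\mathcal M^{\rm sym}(\lambda,\Lambda,\Omega)$, glue with $A_0$ to get $A_\infty$, and then prove that $\Gamma$-convergence of $(G_h)_h$ implies $\Gamma$-convergence of $(F_h)_h$ (one direction of Proposition~\ref{prop:loc-nonloc}). Your $\Gamma$-liminf argument is exactly the paper's.

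There is, however, a genuine gap in your $\Gamma$-limsup construction. You propose to extend the local recovery sequence $v_h\in H^1(\Omega)$ to $\R^n$, set $\widetilde u_h:=(-\Delta)^{\frac{1-s}{2}}(\text{extension of }v_h)$, and then multiply by a cutoff $\varphi\in C_c^\infty(\Omega)$. The problem is that no reasonable extension of $v_h$ coincides with $v=I_{1-s}u$ outside $\Omega$ (the traces need not match), and since $(-\Delta)^{\frac{1-s}{2}}$ is nonlocal, $\widetilde u_h$ will converge in $L^2$ to $(-\Delta)^{\frac{1-s}{2}}$ of the extension of $v|_\Omega$, which is \emph{not} $u$. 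Cutting off afterwards and sending $\varphi\to 1$ on $\Omega$ does not repair this. The paper's construction avoids the issue by operating on the \emph{difference}: one first forces $v_h^\varepsilon:=\varphi^\varepsilon v_h+(1-\varphi^\varepsilon)v$ to agree with $v$ near $\partial\Omega$ (so $v_h^\varepsilon-v\in H^1_0(\Omega)$ extends by zero), then sets $w_h^\varepsilon:=(-\Delta)^{\frac{1-s}{2}}(v_h^\varepsilon-v)\in H^s(\R^n)$ and $u_h^\varepsilon:=u+\chi^\varepsilon w_h^\varepsilon\in H^s_0(\Omega)$. Now $u_h^\varepsilon\to u$ in $L^2$ is immediate from~\eqref{eq:interpolationBCCS}, and $\nabla^s u_h^\varepsilon=\nabla v_h^\varepsilon+R_h^\varepsilon$ on $U^\varepsilon$ with the Leibniz remainder $R_h^\varepsilon\to 0$; the energy is then controlled by $G_h(v_h^\varepsilon)$ plus errors of order $\varepsilon$, and a diagonal argument concludes. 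The key idea you are missing is to apply $(-\Delta)^{\frac{1-s}{2}}$ to the compactly supported correction $v_h^\varepsilon-v$ rather than to $v_h$ itself.

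Two minor points: Lemma~\ref{lem:uniqueness} is not needed for this direction of Proposition~\ref{prop:loc-nonloc}, only for the converse (showing $F_h\to F_\infty\Rightarrow G_h\to G_\infty$); once you prove the liminf and limsup inequalities directly for $F_\infty$, there is nothing further to ``certify''. And the Urysohn argument is irrelevant here, since the theorem only asks for a subsequence.
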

 
In order to prove Theorem~\ref{thm:Gammacompactness}, we begin with the following lemma, which guarantees the uniqueness of the integral representation of nonlocal energies by matrices.

\begin{lem}\label{lem:uniqueness}
Let $A,\widehat A\in L^\infty(\R^n;\R^{n\times n}_{\rm sym})$ and let $\Omega\subseteq\R^n$ be an open set. Assume that 
\[
A(x)\xi\cdot\xi\ge 0\quad\text{and}\quad\widehat A(x)\xi\cdot\xi\ge 0\quad\text{for a.e.\ $x\in\Omega$ and all $\xi\in\R^n$},
\]
and
\begin{equation}\label{eq:main-condition}
\int_{\R^n}A(x)\nabla^s\psi(x)\cdot\nabla^s\psi(x)\,{\rm d}x=\int_{\R^n}\widehat A(x)\nabla^s\psi(x)\cdot\nabla^s\psi(x)\,{\rm d}x\quad\text{for all $\psi\in C_c^\infty(\Omega)$}.
\end{equation}
Then,
\begin{equation*}
A(x)=\widehat A(x)\quad\text{for a.e.\ $x\in\Omega$}.
\end{equation*}
\end{lem}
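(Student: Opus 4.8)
The plan is to derive from the quadratic identity \eqref{eq:main-condition} a pointwise identity of the form $A(x)\nabla^s\psi(x)\cdot\nabla^s\psi(x)=\widehat A(x)\nabla^s\psi(x)\cdot\nabla^s\psi(x)$ a.e., and then use the fact that, by \cite[Lemma~4.3]{KrSc22}, for every $x_0\in\Omega$ and small $r>0$ one can prescribe the value $\nabla^s\psi(x_0)$ of a test function $\psi\in C_c^\infty(B_r(x_0))$ to be any given vector. Setting $C\coloneqq A-\widehat A\in L^\infty(\R^n;\R^{n\times n}_{\rm sym})$, the task reduces to showing that $C(x)\xi\cdot\xi=0$ for all $\xi$ and a.e.\ $x\in\Omega$ forces $C(x)=0$ a.e.\ in $\Omega$; since $C$ is symmetric, this is the elementary linear-algebra fact that a symmetric matrix whose quadratic form vanishes identically is the zero matrix (polarise: $C\xi\cdot\eta=\tfrac14\big(C(\xi+\eta)\cdot(\xi+\eta)-C(\xi-\eta)\cdot(\xi-\eta)\big)=0$). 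So the only real content is localising \eqref{eq:main-condition}.

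First I would pass from the integral identity to a pointwise one. The natural route is a Lebesgue-point / density argument. Fix $\xi\in\R^n$, a point $x_0\in\Omega$, and $r>0$ with $B_r(x_0)\Subset\Omega$. By \cite[Lemma~4.3]{KrSc22} choose $\psi_{x_0,r}\in C_c^\infty(B_r(x_0))$ with $\nabla^s\psi_{x_0,r}(x_0)=\xi$; more carefully, one wants a \emph{family} of such functions, concentrating near $x_0$, so that $\nabla^s\psi$ is close to the constant $\xi$ on a large portion of a small ball, with the tail contributions (where $\nabla^s\psi$ is not controlled, since $\nabla^s$ is nonlocal) being negligible. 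Because $\nabla^s\psi=I_{1-s}(\nabla\psi)=\nabla(I_{1-s}\psi)$ by Proposition~\ref{prop:identities}, the decay of $\nabla^s\psi$ away from the support of $\psi$ is like $|x|^{-(n+s)}$ at infinity and, more to the point, the $L^2$-mass of $\nabla^s\psi$ outside a fixed neighbourhood of $x_0$ can be made small relative to its mass near $x_0$ by a suitable rescaling $\psi_\varepsilon(x)=\psi((x-x_0)/\varepsilon)$: indeed $\nabla^s\psi_\varepsilon(x)=\varepsilon^{-s}(\nabla^s\psi)((x-x_0)/\varepsilon)$, so $\|\nabla^s\psi_\varepsilon\|_{L^2(\R^n)}^2=\varepsilon^{n-2s}\|\nabla^s\psi\|_{L^2(\R^n)}^2$ while the portion over any fixed annulus $\{|x-x_0|\ge\delta\}$ scales better. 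Plugging $\psi=\psi_\varepsilon$ into \eqref{eq:main-condition}, dividing by $\varepsilon^{n-2s}$, and letting $\varepsilon\to0^+$ at a Lebesgue point of both $A$ and $\widehat A$ yields $A(x_0)\eta_\psi\cdot\eta_\psi=\widehat A(x_0)\eta_\psi\cdot\eta_\psi$, where $\eta_\psi$ is a weighted average of $\nabla^s\psi$; varying $\psi$ (again via \cite[Lemma~4.3]{KrSc22}, which gives enough freedom to span all directions) produces $A(x_0)\xi\cdot\xi=\widehat A(x_0)\xi\cdot\xi$ for every $\xi$.

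The main obstacle is precisely this localisation step: unlike the local case, where $\nabla\psi$ is supported in $B_r(x_0)$ and one can test against functions that are exactly linear on $B_r(x_0)$, here $\nabla^s\psi$ is globally supported, so one must quantify that its "far" part does not pollute the limit. The cleanest way is to set up the blow-up carefully — work with $\psi_\varepsilon(x)=\psi((x-x_0)/\varepsilon)$, use the exact scaling of $\nabla^s$ under dilations together with the $L^\infty$-bound on $A$ and $\widehat A$ to estimate the contribution of $\{|x-x_0|\ge\delta\varepsilon^\theta\}$ for a good choice of $\theta\in(0,1)$, and invoke continuity of translations in $L^1$ (Lebesgue points) for the "near" part. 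One then balances: the near part converges to $(A(x_0)-\widehat A(x_0))\eta\cdot\eta$ times a positive constant, and the far part is $o(1)$. I expect the rest — polarisation and concluding $A=\widehat A$ a.e.\ in $\Omega$ — to be immediate from symmetry. A remark worth making is that, alternatively, one can sidestep the blow-up by using the full strength of \cite[Lemma~4.3]{KrSc22} together with a countable-dense set of directions $\xi$ and a Lebesgue-differentiation argument applied directly to the measures $\mu(E)=\int_E (A-\widehat A)\xi\cdot\xi\,dx$; but the scaling argument is the most transparent and mirrors the classical proof in the local setting.
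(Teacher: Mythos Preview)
Your blow-up strategy is the right one and is exactly what the paper does: set $C\coloneqq A-\widehat A$, rescale a fixed $\psi\in C_c^\infty(\R^n)$ by $\psi_\varepsilon(x)=\psi((x-x_0)/\varepsilon)$, use the scaling $\nabla^s\psi_\varepsilon(x)=\varepsilon^{-s}\nabla^s\psi((x-x_0)/\varepsilon)$, change variables, and pass to the limit at a Lebesgue point of $C$ by dominated convergence. (Your near/far splitting is unnecessary: after the change of variables the integrand is $C(\varepsilon y+x_0)\nabla^s\psi(y)\cdot\nabla^s\psi(y)$, which is dominated by $\|C\|_{L^\infty}|\nabla^s\psi|^2\in L^1(\R^n)$, so dominated convergence applies directly.)

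The gap is in what the blow-up actually yields and how you conclude from it. The limit is \emph{not} of the form $C(x_0)\eta_\psi\cdot\eta_\psi=0$ for some vector $\eta_\psi$; it is
\[
\int_{\R^n} C(x_0)\nabla^s\psi(y)\cdot\nabla^s\psi(y)\,{\rm d}y=0,\qquad\text{i.e.}\qquad C(x_0):\!\!\int_{\R^n}\nabla^s\psi\otimes\nabla^s\psi\,{\rm d}y=0,
\]
a pairing of the constant matrix $C(x_0)$ against the second-moment matrix of $\nabla^s\psi$. Consequently \cite[Lemma~4.3]{KrSc22}, which only prescribes $\nabla^s\psi$ at a single point, is not the right tool here: you would need to show that the matrices $\int\nabla^s\psi\otimes\nabla^s\psi$ span $\R^{n\times n}_{\rm sym}$, and there is no reason a pointwise prescription gives that. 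The paper closes this gap differently (its Step~2): by density the identity holds for all $u\in H^s(\R^n)$; taking $u=(-\Delta)^{(1-s)/2}v$ with $v\in H^1(\R^n)$ and using Proposition~\ref{prop:fr-Lap} gives $\nabla^s u=\nabla v$, whence $\int_{\R^n}C(x_0)\nabla v\cdot\nabla v\,{\rm d}y=0$ for all $v\in H^1(\R^n)$, and then the classical local uniqueness result \cite[Lemma~22.5]{DalMaso} finishes. Your polarisation observation is fine once one has $C(x_0)\xi\cdot\xi=0$ for all $\xi$, but getting there requires this reduction to local gradients rather than the pointwise lemma you invoke.
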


\begin{proof}
We define
\[
C\coloneqq A-\widehat A\in L^\infty(\R^n;\R^{n\times n}_{\rm sym}).
\]
Since $C\in L^\infty(\R^n;\R^{n\times n}_{\rm sym})$, for a.e.\ $x_0\in\R^n$ and for all $M\in (0,\infty)$ we have
\[
\lim_{r\to \infty}\int_{B_M(0)}\left|C\left(\frac{y}{r}+x_0\right)-C(x_0)\right|\,{\rm d}y=0.
\]

In particular, there exists $(r_k)_k\subset (0,\infty)$ with $r_k\to\infty$ as $k\to\infty$ such that 
\begin{equation}\label{eq:Lebesgue}
\left|C\left(\frac{y}{r_k}+x_0\right)-C(x_0)\right|\to 0 \quad\text{as $k\to\infty$ for a.e.\ $y\in \R^n$}.
\end{equation}

{\bf Step 1: Blow-up}. We fix $\varphi\in C_c^\infty(\R^n)$. For all $x_0\in\Omega$ and $r\in (0,\infty)$ we define
\[
\varphi_{x_0,r}(x)\coloneqq \varphi(r(x-x_0))\quad\text{for all $x\in\R^n$}.
\]
There exists $r_0=r_0(x_0,\Omega)\in (0,\infty)$ such that
\[
\varphi_{x_0,r}\in C_c^\infty(\Omega)\quad\text{for all $r\in (r_0,\infty)$}.
\]
We have that
\begin{align*}
\nabla^s\varphi_{x_0,r}(x)=r^s\nabla^s\varphi(r(x-x_0)).
\end{align*}
By~\eqref{eq:main-condition}, for all $x_0\in\Omega$ and $r\in (r_0,\infty)$ we have
\begin{align}
0&=r^{n-2s}\int_{\R^n}C(x)\nabla^s\varphi_{x_0,r}(x)\cdot\nabla^s\varphi_{x_0,r}(x)\,{\rm d}x\nonumber\\
&=r^n\int_{\R^n}C(x)\nabla^s\varphi(r(x-x_0))\cdot\nabla^s\varphi(r(x-x_0))\,{\rm d}x\nonumber\\
&=\int_{\R^n}C\left(\frac{y}{r}+x_0\right)\nabla^s\varphi(y)\cdot\nabla^s\varphi(y)\,{\rm d}y.\label{eq:blow-up}
\end{align}
Moreover, by~\eqref{eq:Lebesgue}, we derive
\[
C\left(\frac{y}{r_k}+x_0\right)\nabla^s\varphi(y)\cdot\nabla^s\varphi(y)\to C(x_0)\nabla^s\varphi(y)\cdot \nabla^s\varphi(y)\quad\text{for a.e.\ $y\in\R^n$ as $k\to\infty$}.
\]
Since
\[
\left|C\left(\frac{y}{r_k}+x_0\right)\nabla^s\varphi(y)\cdot\nabla^s\varphi(y)\right|\le \|C\|_{L^\infty\left(\R^n;\R^{n\times n}_{\rm sym}\right)}|\nabla^s\varphi(y)|^2\quad\text{for a.e.\ $y\in\R^n$ and all $k\in\mathbb N$},
\]
in virtue of the Dominated Convergence Theorem and~\eqref{eq:blow-up}, we conclude that 
\[
\int_{\R^n}C(x_0)\nabla^s\varphi(y)\cdot\nabla^s\varphi(y)\,{\rm d}y=0\quad\text{for all $\varphi\in C_c^\infty(\R^n)$}.
\]

{\bf Step 2: Reduction to the local case}. By a density argument, we derive that 
\[
\int_{\R^n}C(x_0)\nabla^s u(y)\cdot\nabla^s u(y)\,{\rm d}y=0\quad\text{for all $u\in H^{s}(\R^n)$}.
\]
Let $v\in H^1(\R^n)$. By Proposition~\ref{prop:fr-Lap}, we have that $u\coloneqq(-\Delta)^\frac{1-s}{2}v\in H^{s}(\R^n)$ and 
\[
\nabla^su(x)=\nabla v(x)\quad\text{for a.e.\ $x\in\R^n$}.
\]
Therefore, we derive that
\[
\int_{\R^n}A(x_0)\nabla v(y)\cdot\nabla v(y)\,{\rm d}y=\int_{\R^n}\widehat A(x_0)\nabla v(y)\cdot\nabla v(y)\,{\rm d}y\quad\text{for all $v\in H^1(\R^n)$}.
\]
The claim follows by~\cite[Lemma~22.5]{DalMaso}.
\end{proof}

As a consequence of Lemma~\ref{lem:uniqueness}, we obtain the following equivalence between $\Gamma$-convergence of the nonlocal energies $(F_h)_h$ and $\Gamma$-convergence of the local ones $(G_h)_h$. The proof is inspired by some recent ideas presented in~\cite{CuKrSc23,KrSc22}.

\begin{prop}\label{prop:loc-nonloc}
Let $A_0\in\mathcal M^{\rm sym}(\lambda,\Lambda,\R^n)$. For every $h\in\mathbb N$, let $A_h,A_\infty \in \mathcal M^{\rm sym}(\lambda,\Lambda,\Omega,A_0)$ and $F_h,F_\infty\colon L^2(\R^n)\to [0,\infty]$ be the nonlocal energies, respectively defined in~\eqref{eq:Fh} and~\eqref{eq:Finfty}. For every $h\in\mathbb N$, define 
\[
B_h\coloneqq A_h|_\Omega\in\mathcal M^{\rm sym}(\lambda,\Lambda,\Omega),\qquad B_\infty\coloneqq A_\infty|_\Omega\in\mathcal M^{\rm sym}(\lambda,\Lambda,\Omega),
\]
and consider the local energies $G_h,G_\infty\colon L^2(\Omega)\rightarrow [0,\infty]$, respectively defined in~\eqref{eq:Gh} and~\eqref{eq:Ginfty}. Then,
\[
\text{$(F_h)_h$ $\Gamma$-converges to $F_\infty$ strongly in $L^2(\R^n)$}
\]
if and only if
\[
\text{$(G_h)_h$ $\Gamma$-converges to $G_\infty$ strongly in $L^2(\Omega)$}.
\]
\end{prop}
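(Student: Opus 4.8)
The proof will establish the two implications separately, using the correspondence between fractional and classical gradients furnished by Propositions~\ref{prop:fr-Lap} and~\ref{prop:Riesz}, together with the uniqueness Lemma~\ref{lem:uniqueness}. The guiding principle is that the map $u\mapsto w=I_{1-s}u$ (with inverse $w\mapsto u=(-\Delta)^{\frac{1-s}{2}}w$) identifies $\nabla^s u$ with $\nabla w$ a.e.\ in $\R^n$, so that $F_h(u)$ and $G_h(w|_\Omega)$ carry the same energy density $B_h(x)\nabla w(x)\cdot\nabla w(x)$ on $\Omega$; the only subtlety is that this correspondence does not respect the spaces $H^s_0(\Omega)$ and $H^1_0(\Omega)$, nor the $L^2$-topologies on $\R^n$ and $\Omega$ in which the two $\Gamma$-convergences are formulated.

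\textbf{Step 1 (from nonlocal to local).} Assume $(F_h)_h$ $\Gamma$-converges to $F_\infty$ in $L^2(\R^n)$. I would first use the existence part of Proposition~\ref{prop:DalMaso}: up to a subsequence $(G_h)_h$ $\Gamma$-converges in $L^2(\Omega)$ to some $\widetilde G_\infty$, which by the integral representation part is of the form~\eqref{eq:Ginfty} for a matrix $\widetilde B_\infty\in\mathcal M^{\rm sym}(\lambda,\Lambda,\Omega)$. The goal is to show $\widetilde B_\infty=B_\infty$ a.e.\ in $\Omega$, which by Lemma~\ref{lem:uniqueness} reduces to proving
\[
\int_\Omega \widetilde B_\infty(x)\nabla\psi(x)\cdot\nabla\psi(x)\,{\rm d}x=\int_\Omega B_\infty(x)\nabla\psi(x)\cdot\nabla\psi(x)\,{\rm d}x\qquad\text{for all }\psi\in C_c^\infty(\Omega).
\]
To this end, fix $\psi\in C_c^\infty(\Omega)$ and set $v_h\coloneqq(-\Delta)^{\frac{1-s}{2}}\psi$; by Proposition~\ref{prop:fr-Lap}, $v_h=v\in H^s(\R^n)$ and $\nabla^s v=\nabla\psi$ a.e.\ in $\R^n$, but $v$ need not lie in $H^s_0(\Omega)$. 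I would cut off: let $\chi\in C_c^\infty(\Omega)$ with $\chi=1$ on a neighbourhood of $\supp\psi$, and consider $u\coloneqq\chi v\in H^s_0(\Omega)$ (Leibniz rule, Proposition~\ref{prop:Leibniz}). Then $\nabla^s u=\nabla\psi$ a.e.\ where $\chi\equiv1$, in particular on $\supp\psi$; since $A_h=A_\infty=\widetilde{}\,$-independent off $\Omega$ doesn't enter because the integrand of $F$ is supported where $\nabla^s u\ne0$\dots\ one must be careful: $\nabla^s u$ is generally nonzero outside $\supp\psi$ too. The clean way is: both $F_\infty$ and the value obtained from the $\Gamma$-convergence applied to the recovery/constant sequence $u_h\equiv u$ compute $\frac12\int_{\R^n}A_\infty\nabla^s u\cdot\nabla^s u$, and separately, using $I_{1-s}u$ and Proposition~\ref{prop:Riesz}, the local functionals evaluated at the $H^1$-function $w\coloneqq I_{1-s}u$ give $\frac12\int_\Omega \widetilde B_\infty\nabla w\cdot\nabla w$ plus a term on $\R^n\setminus\Omega$; matching these via $A_\infty=A_0$ off $\Omega$ cancels the exterior contributions and leaves the desired identity with $\nabla w=\nabla\psi$ on $\supp\psi$ and the two matrices agreeing off $\Omega$. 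The honest statement: one transfers $\Gamma$-convergence along the isomorphism $u\leftrightarrow w$, which respects strong $L^2_{\rm loc}$ convergence by Propositions~\ref{prop:Is},~\ref{prop:fr-Lap},~\ref{prop:Riesz}, and the exterior matrix $A_0$ being fixed makes the difference $F_h-(\text{exterior part})$ depend only on $B_h$.

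\textbf{Step 2 (from local to nonlocal) and the main obstacle.} Conversely, assume $(G_h)_h$ $\Gamma$-converges to $G_\infty$ in $L^2(\Omega)$. I would prove the $\Gamma$-liminf and $\Gamma$-limsup inequalities for $(F_h)_h$ directly. For $\Gamma$-limsup: given $u\in H^s_0(\Omega)$, set $w\coloneqq I_{1-s}u\in H^1_{\rm loc}(\R^n)$ with $\nabla w=\nabla^s u$; on $\Omega$ take a local recovery sequence $w_h\to w|_\Omega$ in $L^2(\Omega)$ with $\limsup G_h(w_h)\le G_\infty(w|_\Omega)$, extend to $\R^n$ by $w$ outside a slightly larger set, correct to match $w$ near $\partial\Omega$, and set $u_h\coloneqq(-\Delta)^{\frac{1-s}{2}}w_h$; one checks $u_h\in H^s_0(\Omega)$ (this is where the fractional Leibniz rule and the compactness of $H^s_0(\Omega)$, as advertised in the introduction, are needed) and $u_h\to u$ in $L^2(\R^n)$ via Proposition~\ref{prop:fr-Lap}, while $F_h(u_h)=G_h(w_h)+\frac12\int_{\R^n\setminus\Omega}A_0\nabla w_h\cdot\nabla w_h$ with the exterior term controlled/convergent. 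For $\Gamma$-liminf: given $u_h\to u$ in $L^2(\R^n)$ with $\liminf F_h(u_h)<\infty$, extract a subsequence with $u_h\rightharpoonup u$ in $H^s_0(\Omega)$ (coercivity from $\lambda>0$ and Poincaré, Proposition~\ref{prop:Poincare}), pass to $w_h\coloneqq I_{1-s}u_h\rightharpoonup w\coloneqq I_{1-s}u$ in $H^1(\Omega')$ for $\Omega\Subset\Omega'$ (Proposition~\ref{prop:Riesz}), use $F_h(u_h)\ge G_h(w_h|_\Omega)$ exactly (the exterior term is nonnegative), and apply the local $\Gamma$-liminf to get $\liminf F_h(u_h)\ge\liminf G_h(w_h|_\Omega)\ge G_\infty(w|_\Omega)=F_\infty(u)$, the last equality because $\nabla w=\nabla^s u$ on $\Omega$ and $A_\infty=A_0$ off $\Omega$ together with $w=I_{1-s}u$. \emph{The hard part} is the bookkeeping at $\partial\Omega$ in the $\Gamma$-limsup: one must produce from a local recovery sequence in $H^1(\Omega)$ a genuine competitor $u_h\in H^s_0(\Omega)$, i.e.\ reconcile the mismatch between the local Dirichlet datum ($w=0$ on $\partial\Omega$ is \emph{not} what $I_{1-s}u$ satisfies, and conversely $(-\Delta)^{\frac{1-s}{2}}$ of an $H^1_0(\Omega)$ function need not be supported in $\Omega$) and the nonlocal one. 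This is precisely the point flagged in the introduction (``the Riesz potential does not preserve boundary conditions''), and the fix is a careful truncation together with the quantitative Leibniz estimate of Proposition~\ref{prop:Leibniz} and the interpolation inequality~\eqref{eq:interpolationBCCS}, ensuring the correction terms vanish in the limit.
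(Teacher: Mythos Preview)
Your overall strategy matches the paper's---use the Riesz potential correspondence for the direct implication, and for the nonlocal$\Rightarrow$local implication invoke Proposition~\ref{prop:DalMaso}, the reverse implication already proved, uniqueness of the $\Gamma$-limit, and Lemma~\ref{lem:uniqueness} (this clean bootstrap is what your ``honest statement'' should be; the direct computation with $\psi\in C_c^\infty(\Omega)$ that you begin and abandon is a detour). However, your local$\Rightarrow$nonlocal direction contains two genuine gaps.

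In the $\Gamma$-liminf, the equality $G_\infty(w|_\Omega)=F_\infty(u)$ you write is false: one has $F_\infty(u)=G_\infty(w|_\Omega)+\tfrac12\int_{\R^n\setminus\Omega}A_0\nabla^s u\cdot\nabla^s u$, and the exterior term is in general strictly positive since $\nabla^s u$ does not vanish outside $\Omega$. You must retain the exterior contribution and bound it below separately by weak lower semicontinuity of the fixed $A_0$-quadratic form (using $\nabla^s u_h\rightharpoonup\nabla^s u$ in $L^2$), as the paper does. More seriously, in the $\Gamma$-limsup your candidate $u_h\coloneqq(-\Delta)^{\frac{1-s}{2}}w_h$ does \emph{not} lie in $H^s_0(\Omega)$: even if the extended $w_h$ agrees with $w=I_{1-s}u$ outside $\Omega$, so that $u_h=u+(-\Delta)^{\frac{1-s}{2}}(w_h-w)$, the operator $(-\Delta)^{\frac{1-s}{2}}$ is nonlocal and $(-\Delta)^{\frac{1-s}{2}}(w_h-w)$ is not supported in $\Omega$. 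The paper's construction supplies the missing step: after modifying the local recovery so that $v_h^\varepsilon-v\in H^1_0(\Omega)$ via an interior cut-off, and setting $w_h^\varepsilon\coloneqq(-\Delta)^{\frac{1-s}{2}}(v_h^\varepsilon-v)\in H^s(\R^n)$, one cuts off \emph{a second time} with $\chi^\varepsilon\in C_c^\infty(\Omega)$ and defines $u_h^\varepsilon\coloneqq u+\chi^\varepsilon w_h^\varepsilon\in H^s_0(\Omega)$. The Leibniz remainder estimate of Proposition~\ref{prop:Leibniz} together with~\eqref{eq:interpolationBCCS} shows the correction terms vanish, and a diagonal argument in $\varepsilon$ concludes. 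You name the right tools, but the competitor sequence you actually write down fails the boundary condition.
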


\begin{proof}
{\bf Step 1: $\Gamma$-convergence of $(G_h)_h$ implies $\Gamma$-convergence of $(F_h)_h$}. We assume that \begin{equation}\label{eq:GammaLocal}
\text{$(G_h)_h$ $\Gamma$-converges to $G_\infty$ strongly in $L^2(\Omega)$},
\end{equation}
and we want to show that
\[
\text{$(F_h)_h$ $\Gamma$-converges to $F_\infty$ strongly in $L^2(\R^n)$}. 
\]

{\bf $\Gamma$-liminf inequality}. Let $u_h,u\in L^2(\R^n)$, $h\in\mathbb{N}$, be such that $(u_h)_h$ strongly converges to $u$ in $L^2(\R^n)$ as $h\to\infty$. 
We show that 
\[
F_\infty(u)\le\liminf_{h\to\infty}F_h(u_h).
\]
Without loss of generality, we assume that 
\[
\liminf_{h\to\infty}F_h(u_h)<\infty,
\]
the conclusion being otherwise trivial, and that the limit is actually achieved up to a not relabeled subsequence, i.e.
\[
\liminf_{h\to\infty}F_h(u_h)=\lim_{h\to\infty}F_h(u_h).
\]
According to its own definition, $(F_h)_h$ is finite only on $H^s_0(\Omega)$, thus forcing the sequence $(u_h)_h$ to lie therein. Since $(A_h)_h\subset \mathcal M^{\rm sym}(\lambda,\Lambda,\Omega,A_0)$, there exists a positive constant $C$ such that 
\[
\sup_{h\in\mathbb N}\left\|\nabla^s u_h\right\|_{L^2(\R^n;\R^n)}\le C,
\]
which yields that $(u_h)_h$ is uniformly bounded in $H^s_0(\Omega)$.
Then, the limit $u$ also lies on $H^s_0(\Omega)$ and
\begin{align}\label{eq:weak-u}
u_h\to u\quad\text{weakly in $H^s_0(\Omega)$ as $h\to\infty$}.
\end{align}
For every $h\in\mathbb N$, we define
\begin{align*}
v_h\coloneqq I_{1-s}u_h\quad\text{and}\quad v\coloneqq I_{1-s}u.
\end{align*}
By Proposition~\ref{prop:Riesz}, $v_h,v\in H^1(\Omega)$ for every $h\in\mathbb N$ and, by~\eqref{eq:weak-u} and the continuity of the linear operator $I_{1-s}\colon H^s(\R^n)\to H^1(\Omega)$,
\[
v_h\to v\quad\text{strongly in $L^2(\Omega)$ and weakly in $H^1(\Omega)$ as $h\to\infty$}.
\]
By~\eqref{eq:GammaLocal},
\begin{align}\label{eq:Gh-Gammaliminf}
G_\infty(v)\le\liminf_{h\to\infty}G_h(v_h),
\end{align}
in virtue of the $\Gamma$-liminf inequality.
We also note that, by Proposition~\ref{prop:Riesz},
\[
\nabla v_h=\nabla^s u_h
\quad\text{and}\quad\nabla v=\nabla^s u\quad\text{a.e.\ in $\R^n$}.
\]
Thus, we can rephrase~\eqref{eq:Gh-Gammaliminf} as
\[
\frac{1}{2}\int_\Omega B_\infty(x)\nabla^s u(x)\cdot\nabla^s u(x)\,{\rm d}x\le \liminf_{h\to\infty}\frac{1}{2}\int_\Omega A_h(x)\nabla^s u_h(x)\cdot\nabla^s u_h(x)\,{\rm d}x.
\]
On the other hand, since $A_0\in\mathcal M(\lambda,\Lambda,\R^n)$, by~\eqref{eq:weak-u} we get
\[
\frac{1}{2}\int_{\R^n\setminus\Omega} A_0(x)\nabla^s u(x)\cdot\nabla^s u(x)\,{\rm d}x\le \liminf_{h\to\infty}\frac{1}{2}\int_{\R^n\setminus \Omega} A_0(x)\nabla^s u_h(x)\cdot\nabla^s u_h(x)\,{\rm d}x.
\]
Hence,
\begin{align*}
F_\infty(u)&=\frac{1}{2}\int_\Omega B_\infty(x)\nabla^s u(x)\cdot\nabla^s u(x)\,{\rm d}x+\frac{1}{2}\int_{\R^n\setminus\Omega} A_0(x)\nabla^s u(x)\cdot\nabla^s u(x)\,{\rm d}x\\ &\le\liminf_{h\to\infty}\frac{1}{2}\int_\Omega A_h(x)\nabla^s u_h(x)\cdot\nabla^s u_h(x)\,{\rm d}x+\liminf_{h\to\infty}\frac{1}{2}\int_{\R^n\setminus \Omega} A_0(x)\nabla^s u_h(x)\cdot\nabla^s u_h(x)\,{\rm d}x\\
&\le\liminf_{h\to\infty}F_h(u_h).
\end{align*}

{\bf $\Gamma$-limsup inequality}. We fix $u\in L^2(\R^n)$ and show the existence of a recovery sequence $(u_h)_h\subset L^2(\R^n)$ such that $(u_h)_h$ strongly converges to $u$ in $L^2(\R^n)$, as $h\to\infty$, and
\begin{equation}\label{eq:Fh-Gammalimsup}
F_\infty(u)\ge\limsup_{h\to\infty}F_h(u_h).
\end{equation}
The proof of the $\Gamma$-limsup inequality is rather technical and for the readers' convenience, we summarize the main steps below.
\begin{itemize}
 \item First, we exploit the Riesz potential to move to the local setting and we obtain the existence of a recovery sequence $(v_h)_h$ for the $\Gamma$-convergence of the local energies $(G_h)_h$ to $G_\infty$.
 \item Then, through a cut-off argument, we adapt the sequence $(v_h)_h$ to the boundary data of our problem and we come back to the nonlocal setting, obtaining the existence of a sequence $(u_h^\varepsilon)_h$ satisfying the $\Gamma$-limsup inequality up to a reminder term, which depends on a positive parameter $\varepsilon$.
 \item In the last part of the proof, we let the reminder term tend to zero via a diagonal argument, which ensures the existence of a recovery sequence $(u_h)_h$ for our problem.
\end{itemize}
Without loss of generality, we consider only the case of $u\in H^s_0(\Omega)$, the conclusion being otherwise trivial.

We define
\[
v\coloneqq I_{1-s}u.
\]
Then, by Proposition~\ref{prop:Riesz},
\begin{equation}\label{eq:nablav}
v\in H^1(\Omega)\quad\text{and}\quad\nabla v=\nabla^s u\quad\text{a.e.\ in $\R^n$}.
\end{equation}
Moreover, by~\eqref{eq:GammaLocal}, there exists a recovery sequence $(v_h)_h\subset H^1(\Omega)$ for $v$, i.e.\ such that
\begin{equation}\label{eq:new-recovery}
v_h\to v\text{ strongly in $L^2(\Omega)$ as $h\to\infty$}\quad\text{and}\quad\lim_{h\to\infty}G_h(v_h)=G_\infty(v)<\infty.
\end{equation}
We recall, in fact, that the $\Gamma$-liminf and the $\Gamma$-limsup inequalities imply that the limit is achieved at least for the recovery sequence.
In particular, by the definition of $G_h$ (see~\eqref{eq:Gh}), $(v_h)_h$ is bounded in $H^1(\Omega)$, which gives that
\[
v_h\to v\quad\text{weakly in $H^1(\Omega)$ as $h\to\infty$}.
\]

Let $\varepsilon>0$ be fixed and let $K^\varepsilon\Subset \Omega$ be a compact set such that
\begin{equation}\label{eq:fund-est}
\int_{\Omega\setminus K^\varepsilon}|\nabla v(x)|^2\,{\rm d} x<\varepsilon.
\end{equation}
We fix an open set $U^\varepsilon$ such that $K^\varepsilon \Subset U^\varepsilon\Subset\Omega $, consider a cut-off function $\varphi^\varepsilon\in C_c^\infty(U^\varepsilon)$ satisfying $0\le \varphi^\varepsilon\le 1$ on $U^\varepsilon$ and $\varphi^\varepsilon\equiv 1$ on $K^\varepsilon$ and, for every $h\in\mathbb N$, we define
\begin{equation}\label{eq:vhe}
v_h^\varepsilon\coloneqq\varphi^\varepsilon v_h+(1-\varphi^\varepsilon)v.
\end{equation}
By construction,
\begin{equation}\label{eq:vh-v}
v_h^\varepsilon\to v\quad\text{strongly in $L^2(\Omega)$ and weakly in $H^1(\Omega)$ as $h\to\infty$}.
\end{equation}
Moreover, by~\eqref{eq:B-Lambda},~\eqref{eq:classicbounds},~\eqref{eq:fund-est}, and the convexity of the map $\xi\mapsto A_h(x)\xi\cdot\xi$, it holds that
\begin{align*}
G_h(v_h^\varepsilon)&=\frac{1}{2}\int_\Omega A_h(x)[\varphi^\varepsilon (x)\nabla v_h(x)+(1-\varphi^\varepsilon(x))\nabla v(x)]\cdot [\varphi^\varepsilon (x)\nabla v_h(x)+(1-\varphi^\varepsilon(x))\nabla v(x)]\,{\rm d}x\\
&\quad+\int_\Omega A_h(x)[\nabla \varphi^\varepsilon(x) (v_h(x)-v(x))]\cdot [\varphi^\varepsilon (x)\nabla v_h(x)+(1-\varphi^\varepsilon(x))\nabla v(x)]\,{\rm d}x\\
&\quad+\frac{1}{2}\int_\Omega A_h(x)[\nabla \varphi^\varepsilon(x) (v_h(x)-v(x))]\cdot [\nabla \varphi^\varepsilon(x) (v_h(x)-v(x))]\,{\rm d}x\\
&\le\frac{1}{2}\int_\Omega \varphi^\varepsilon(x) A_h(x)\nabla v_h(x)\cdot \nabla v_h(x)\,{\rm d}x+\frac{1}{2}\int_\Omega (1-\varphi^\varepsilon(x)) A_h(x)\nabla v(x)\cdot \nabla v(x)\,{\rm d}x\\
&\quad+\Lambda\|\nabla\varphi^\varepsilon\|_{L^\infty(\Omega;\R^n)}\|v_h-v\|_{L^2(\Omega)}\left(\|\nabla v_h\|_{L^2(\Omega;\R^n)}+\|\nabla v\|_{L^2(\Omega;\R^n)}\right)\\
&\quad+\frac{\Lambda}{2}\|\nabla\varphi^\varepsilon\|_{L^\infty(\Omega;\R^n)}^2\|v_h-v\|_{L^2(\Omega)}^2\\
&\le G_h(v_h)+\frac{\Lambda}{2}\varepsilon+\Lambda\|\nabla\varphi^\varepsilon\|_{L^\infty(\Omega;\R^n)}\|v_h-v\|_{L^2(\Omega)}\left(\|\nabla v_h\|_{L^2(\Omega;\R^n)}+\|\nabla v\|_{L^2(\Omega;\R^n)}\right)\\
&\quad+\frac{\Lambda}{2}\|\nabla\varphi^\varepsilon\|_{L^\infty(\Omega;\R^n)}^2\|v_h-v\|_{L^2(\Omega)}^2.
\end{align*}
Hence, by~\eqref{eq:new-recovery} and the boundedness of $(v_h)_h$ in $H^1(\Omega)$, we conclude that
\begin{align}\label{eq:Glimsup}
&\limsup_{h\to\infty}G_h(v_h^\varepsilon)\le \lim_{h\to\infty}G_h(v_h)+\frac{\Lambda}{2}\varepsilon= G_\infty(v)+\frac{\Lambda}{2}\varepsilon.
\end{align}

We trivially extend $v_h^\varepsilon-v\in H^1_0(\Omega)$ to a function in $H^1(\R^n)$ and, for every $h\in\mathbb N$, we define
\[
w_h^\varepsilon\coloneqq(-\Delta)^{\frac{1-s}{2}}(v_h^\varepsilon-v).
\]
By Proposition~\ref{prop:fr-Lap}, we have that
\begin{equation}\label{eq:nablaswh}
w_h^\varepsilon \in H^s(\R^n)\quad\text{and}\quad\nabla^s w_h^\varepsilon=\nabla (v_h^\varepsilon-v)\quad\text{a.e.\ in $\R^n$} 
\end{equation}
and, by~\eqref{eq:interpolationBCCS},~\eqref{eq:vh-v}, and~\eqref{eq:nablaswh}, there exist two positive constants $C$ and $C_\varepsilon$ such that
\[
\|w_h^\varepsilon\|_{H^s(\R^n)}^2=\|w_h^\varepsilon\|_{L^2(\R^n)}^2+\|\nabla^s w_h^\varepsilon\|_{L^2(\R^n;\R^n)}^2\le C\|v_h^\varepsilon-v\|_{H^1_0(\Omega)}^2\le C_\varepsilon\quad\text{for all $h\in\mathbb N$}.
\]
Therefore, by~\eqref{eq:composition},~\eqref{eq:duality} and~\eqref{eq:vh-v}, for all $\psi\in C_c^\infty(\R^n)$ we get that
\[
\int_{\R^n}w_h^\varepsilon(x)\psi(x)\,{\rm d}x=\int_{\R^n}(v_h^\varepsilon(x)-v(x))(-\Delta)^{\frac{1-s}{2}}\psi(x)\,{\rm d}x\to 0\quad\text{as $h\to\infty$},
\]
which yields that
\begin{align}\label{eq:wh-weak}
w_h^\varepsilon\to 0\quad\text{weakly in $H^s(\R^n)$ as $h\to\infty$}.
\end{align}
In particular, by~\eqref{eq:interpolationBCCS} and~\eqref{eq:vh-v},
\begin{align}\label{eq:wh-strong}
w_h^\varepsilon\to 0\quad\text{strongly in $L^2(\R^n)$ as $h\to\infty$}.
\end{align}

Let $\chi^\varepsilon\in C_c^\infty(\Omega)$ satisfy $0\le \chi^\varepsilon\le 1$ on $\Omega$ and $\chi^\varepsilon=1$ on $\overline{U^\varepsilon}$.
We define
\[
u_h^\varepsilon\coloneqq u+\chi^\varepsilon w_h^\varepsilon\in H^s_0(\Omega).
\]
By~\eqref{eq:wh-weak} and~\eqref{eq:wh-strong},
\begin{equation}\label{eq:uhepsilon1}
u_h^\varepsilon\to u\quad\text{strongly in $L^2(\R^n)$ and weakly in $H^s_0(\Omega)$ as $h\to\infty$}.
\end{equation}
For every $h\in\mathbb N$, we also set
\[
R_h^\varepsilon\coloneqq\nabla^s(\chi^\varepsilon w_h^\varepsilon)-\chi^\varepsilon\nabla^sw_h^\varepsilon.
\]
By Proposition~\ref{prop:Leibniz}, there exists a positive constant $C$ such that
\begin{align*}
\left\|R_h^\varepsilon\right\|_{L^2(\R^n;\R^n)}\le C\left\|\chi^\varepsilon\right\|_{W^{1,\infty}(\R^n)}\left\|w_h^\varepsilon\right\|_{L^2(\R^n)}\quad\text{for all $h\in\mathbb N$}.
\end{align*}
Then, by~\eqref{eq:wh-strong},
\begin{align}\label{eq:Rh-conv}
R_h^\varepsilon\to 0\quad\text{strongly in $L^2(\R^n;\R^n)$ as $h\to\infty$}
\end{align}
and, by~\eqref{eq:nablaswh},
\begin{align}\label{eq:nablauh-for}
\nabla^s u_h^\varepsilon&=\nabla^s u+\chi^\varepsilon\nabla^sw_h^\varepsilon+R_h^\varepsilon=\nabla^s u+\chi^\varepsilon\nabla(v_h^\varepsilon-v)+R_h^\varepsilon\quad\text{a.e.\ in $\R^n$}.
\end{align}
We consider the following decomposition
\begin{align}\label{eq:Fhuh}
F_h(u_h^\varepsilon)&=\frac{1}{2}\int_{U^\varepsilon}A_h(x)\nabla^s u_h^\varepsilon(x)\cdot\nabla^s u_h^\varepsilon(x)\,{\rm d}x+\frac{1}{2}\int_{\Omega\setminus U^\varepsilon}A_h(x)\nabla^s u_h^\varepsilon(x)\cdot\nabla^s u_h^\varepsilon(x)\,{\rm d}x\\
&\quad+\frac{1}{2}\int_{\R^n\setminus\Omega}A_0(x)\nabla^s u_h^\varepsilon(x)\cdot\nabla^s u_h^\varepsilon(x)\,{\rm d}x\nonumber.
\end{align}
By~\eqref{eq:Rh-conv} and~\eqref{eq:nablauh-for}, the last integral in~\eqref{eq:Fhuh} satisfies
\begin{align}\label{eq:Fh-1}
&\lim_{h\to\infty}\frac{1}{2}\int_{\R^n\setminus\Omega}A_0(x)\nabla^s u_h^\varepsilon(x)\cdot\nabla^s u_h^\varepsilon(x)\,{\rm d}x\nonumber\\
 &=\lim_{h\to\infty}\frac{1}{2}\int_{\R^n\setminus\Omega}A_0(x)(\nabla^s u(x)+R_h^\varepsilon(x))\cdot(\nabla^s u(x) +R_h^\varepsilon(x))\,{\rm d}x\nonumber\\
 &=\frac{1}{2}\int_{\R^n\setminus\Omega}A_0(x)\nabla^s u(x)\cdot\nabla^s u(x)\,{\rm d}x.
\end{align}
Concerning the second integral in~\eqref{eq:Fhuh}, we note that, by~\eqref{eq:vhe} and~\eqref{eq:nablauh-for},
\[
\nabla^s u_h^\varepsilon=\nabla^s u+R_h^\varepsilon\quad\text{a.e.\ in }\Omega\setminus U^\varepsilon.
\]
Then, by~\eqref{eq:classicbounds},~\eqref{eq:nablav}, and~\eqref{eq:fund-est},
\begin{align}\label{eq:Fh-2}
 &\limsup_{h\to \infty}\frac{1}{2}\int_{\Omega\setminus U^\varepsilon}A_h(x)\nabla^s u_h^\varepsilon(x)\cdot\nabla^s u_h^\varepsilon(x)\,{\rm d}x\nonumber\\
 &\le\lim_{h\to \infty}\frac{\Lambda}{2}\left\|\nabla^s u+R_h^\varepsilon\right\|^2_{L^2(\Omega\setminus U^\varepsilon;\R^n)}=\frac{\Lambda}{2} \left\|\nabla^s u\right\|^2_{L^2(\Omega\setminus U^\varepsilon;\R^n)}\le\frac{\Lambda}{2}\left\|\nabla v\right\|^2_{L^2(\Omega\setminus K^\varepsilon;\R^n)}\le\frac{\Lambda}{2}\varepsilon.
\end{align}
Finally, for what concerns the first integral in~\eqref{eq:Fhuh}, we observe that, since $\chi^\varepsilon=1$ in $U^\varepsilon$, by~\eqref{eq:nablav} and~\eqref{eq:nablauh-for} we have
\[
\nabla^s u_h^\varepsilon=\nabla v+\nabla(v_h^\varepsilon-v)+R_h^\varepsilon=\nabla v_h^\varepsilon+R_h^\varepsilon\quad\text{a.e.\ in }U^\varepsilon.
\]
Thus,~\eqref{eq:classicbounds} and~\eqref{eq:nablav} imply that
\begin{align}\label{eq:stima}
 &\frac{1}{2}\int_{U^\varepsilon}A_h(x)\nabla^s u_h^\varepsilon(x)\cdot\nabla^s u_h^\varepsilon(x)\,{\rm d}x\nonumber\\
 &=\frac{1}{2}\int_{U^\varepsilon}A_h(x)(\nabla v_h^\varepsilon(x)+R_h^\varepsilon(x)) \cdot(\nabla v_h^\varepsilon(x)+R_h^\varepsilon(x))\,{\rm d}x\nonumber\\
 &=\frac{1}{2}\int_{U^\varepsilon}A_h(x)\nabla v_h^\varepsilon(x) \cdot\nabla v_h^\varepsilon(x)\,{\rm d}x+\int_{U^\varepsilon}A_h(x)\nabla v_h^\varepsilon(x)\cdot R_h^\varepsilon(x)\,{\rm d}x\nonumber\\
 &\quad+\frac{1}{2}\int_{U^\varepsilon}A_h(x)R_h^\varepsilon(x) \cdot R_h^\varepsilon(x)\,{\rm d}x\nonumber\\
 & \le G_h(v_h^\varepsilon)+\int_{U^\varepsilon}A_h(x)\nabla v_h^\varepsilon(x)\cdot R_h^\varepsilon(x)\,{\rm d}x+\frac{1}{2}\int_{U^\varepsilon}A_h(x)R_h^\varepsilon(x) \cdot R_h^\varepsilon(x)\,{\rm d}x
\end{align}
and, since $(v_h^\varepsilon)_h$ is uniformly bounded in $H^1(\Omega)$, by~\eqref{eq:Glimsup},~\eqref{eq:Rh-conv} and~\eqref{eq:stima}, we get
\begin{align}\label{eq:Fh-3}
&\limsup_{h\to\infty}\frac{1}{2}\int_{U^\varepsilon}A_h(x)\nabla^s u_h^\varepsilon(x)\cdot\nabla^s u_h^\varepsilon(x)\,{\rm d}x\le G_\infty(v)+\frac{\Lambda}{2}\varepsilon.
\end{align}
Therefore, by~\eqref{eq:Fh-1},~\eqref{eq:Fh-2} and~\eqref{eq:Fh-3}, we obtain that for all $\varepsilon>0$
\begin{equation}\label{eq:uhepsilon2}
\limsup_{h\to\infty}F_h(u_h^\varepsilon)\le F_\infty(u)+\Lambda\varepsilon.
\end{equation}

To conclude, we use the following diagonal argument. In view of~\cite[Definition~4.1 and Remark~4.3]{DalMaso}, by~\eqref{eq:uhepsilon1} and~\eqref{eq:uhepsilon2}, we have that for all $\varepsilon>0$
\[
\Gamma\text{-}\limsup_{h\to\infty} F_h(u)\coloneqq\sup_{k\in\mathbb N}\limsup_{h\to\infty}\inf_{z\in B_\frac{1}{k}(u)}F_h(z)\le F_\infty(u)+\Lambda\varepsilon.
\]
Hence, by letting $\varepsilon\to 0$, we conclude that
\[
\Gamma\text{-}\limsup_{h\to\infty} F_h(u)\le F_\infty(u)
\]
and, by the properties of the $\Gamma$-$\limsup$ (see e.g.~\cite[Proposition~8.1]{DalMaso}), there exists a sequence $(u_h)_h\subset L^2(\R^n)$ such that $u_h\to u$ strongly in $L^2(\R^n)$ as $h\to\infty$ and 
\[
\limsup_{h\to\infty}F_h(u_h)=\Gamma\text{-}\limsup_{h\to\infty} F_h(u)\le F_\infty(u).
\]
This implies the validity of~\eqref{eq:Fh-Gammalimsup}.
\medskip 

 {\bf Step 2: $\Gamma$-convergence of $(F_h)_h$ implies $\Gamma$-convergence of $(G_h)_h$}. We assume that 
\[
\text{$(F_h)_h$ $\Gamma$-converges to $F_\infty$ strongly in $L^2(\R^n)$},
\]
and we want to show that
\[
\text{$(G_h)_h$ $\Gamma$-converges to $G_\infty$ strongly in $L^2(\Omega)$}.
\]

By Proposition~\ref{prop:DalMaso}, there are a not relabeled subsequence of $(G_h)_h$ and $\widehat G_\infty\colon L^2(\Omega)\to [0,\infty]$ such that 
\begin{equation}\label{eq:hatGh}
\text{$(G_h)_h$ $\Gamma$-converges to $\widehat G_\infty$ strongly in $L^2(\Omega)$}.
\end{equation}
Moreover, there exists $\widehat B_\infty\in\mathcal M^{\rm sym}(\lambda,\Lambda,\Omega)$ such that
\begin{align*}
\widehat G_\infty(v)=\begin{cases}
\displaystyle\frac{1}{2}\int_{\Omega}\widehat B_\infty(x)\nabla v(x)\cdot\nabla v(x)\,{\rm d}x&\text{if $v\in H^1(\Omega)$},\\
\displaystyle\infty&\text{if $v\in L^2(\Omega)\setminus H^1(\Omega)$}.
\end{cases}
\end{align*}
By~\eqref{eq:hatGh} and Step 1, we conclude that
\[
\text{$(F_h)_h$ $\Gamma$-converges to $\widehat F_\infty$ strongly in $L^2(\R^n)$},
\]
where $\widehat F_\infty$ is the nonlocal energies associated with the matrix
\[
\widehat A_\infty(x)\coloneqq
\begin{cases}
\widehat B_\infty(x)&\text{if $x\in\Omega$},\\
A_0(x)&\text{if $x\in\R^n\setminus\Omega$},
\end{cases}
\]
as in~\eqref{eq:Finfty}. 
By the uniqueness of the $\Gamma$-limit, we conclude that $\widehat F_\infty=F_\infty$, and by Lemma~\ref{lem:uniqueness} we derive that $\widehat B_\infty=B_\infty$. Hence, by the Urysohn property of $\Gamma$-convergence, we conclude that 
\[
\text{$(G_h)_h$ $\Gamma$-converges to $G_\infty$ strongly in $L^2(\Omega)$}.
\]
 
\end{proof}

\begin{rmk}
 We point out that the nonlocal energies $(F_h)_h$ account for the boundary condition, while the corresponding local energies $(G_h)_h$ do not, as they are finite in $H^1(\Omega)$ instead of $H^1_0(\Omega)$. We have chosen to work with $(G_h)_h$, since it simplifies the proof of the $\Gamma$-convergence equivalence between nonlocal energies and local ones. On the other hand, if we define
\[
G_h^0(v)\coloneqq
\begin{cases}
\displaystyle\frac{1}{2}\int_{\Omega}B_h(x)\nabla v(x)\cdot\nabla v(x)\,{\rm d}x&\text{if $v\in H^1_0(\Omega)$},\\
\displaystyle\infty&\text{if $v\in L^2(\Omega)\setminus H^1_0(\Omega)$},
\end{cases}
\]
and
\[
G_\infty^0(v)\coloneqq
\begin{cases}
\displaystyle\frac{1}{2}\int_{\Omega}B_\infty(x)\nabla v(x)\cdot\nabla v(x)\,{\rm d}x&\text{if $v\in H^1_0(\Omega)$},\\
\displaystyle\infty&\text{if $v\in L^2(\Omega)\setminus H^1_0(\Omega)$},
\end{cases}
\]
by~\cite[Theorem 13.12 and Theorem 22.4]{DalMaso}, we have that 
\[
(G_h^0)_h\text{ $\Gamma$-converges to }G_\infty^0\text{ strongly in $L^2(\Omega)$}
\]
if and only if
\[
(G_h)_h\text{ $\Gamma$-converges to }G_\infty\text{ strongly in $L^2(\Omega)$}.
\]
Therefore, Proposition~\ref{prop:loc-nonloc} is still valid if we replace $(G_h)_h$ and $G_\infty$, respectively, with $(G_h^0)_h$ and $G_\infty^0$. 
\end{rmk} 

As a consequence of Proposition~\ref{prop:DalMaso} and Proposition~\ref{prop:loc-nonloc}, we can finally prove Theorem~\ref{thm:Gammacompactness}. 

\begin{proof}[Proof of Theorem~\ref{thm:Gammacompactness}]
For every $h\in\mathbb N$, we define the matrix-valued functions
\[
B_h\coloneqq A_h|_\Omega\in\mathcal M^{\rm sym}(\lambda,\Lambda,\Omega),
\]
and we consider the functionals $G_h\colon L^2(\Omega)\rightarrow [0,\infty]$ associated with $B_h$, as in~\eqref{eq:Gh}. 
By Proposition~\ref{prop:DalMaso}, there exist a not relabeled subsequence and a matrix $B_\infty\in\mathcal M^{\rm sym}(\lambda,\Lambda,\Omega)$ such that
\[
\text{$(G_h)_h$ $\Gamma$-converges to $G_\infty$ strongly in $L^2(\Omega)$},
\]
where the functional $G_\infty\colon L^2(\Omega)\rightarrow [0,\infty]$ is defined as in~\eqref{eq:Ginfty}. 

We define 
\[
A_\infty(x)\coloneqq
\begin{cases}
\displaystyle B_\infty(x)&\text{if $x\in\Omega$},\\
\displaystyle A_0(x)&\text{if $x\in\R^n\setminus\Omega$},
\end{cases}
\]
and we let $F_\infty\colon L^2(\Omega)\rightarrow [0,\infty]$ denote the nonlocal energy associated with $A_\infty$, as in~\eqref{eq:Finfty}. 
Then, $A_\infty\in \mathcal M^{\rm sym}(\lambda,\Lambda,\Omega,A_0)$ and, by Proposition~\ref{prop:loc-nonloc}, we deduce that 
\[
\text{$(F_h)_h$ $\Gamma$-converges to $F_\infty$ strongly in $L^2(\R^n)$}.
\]
\end{proof}


 \section{Equivalence between nonlocal \texorpdfstring{$H$}{H}-convergence and \texorpdfstring{$\Gamma$}{Gamma}-convergence}\label{sec:H-Gamma-equiv}

We conclude this paper with the following equivalence between nonlocal $H$-convergence of a sequence $(A_h)_h\subset \mathcal M^{\rm sym}(\lambda,\Lambda,\Omega,A_0)$ and $\Gamma$-convergence of the associated nonlocal energies $(F_h)_h$, introduced in~\eqref{eq:Fh}. 

\begin{theorem}\label{thm:equiv_Gamma-H}
Let $A_0\in\mathcal M^{\rm sym}(\lambda,\Lambda,\R^n)$. For every $h\in\mathbb{N}$, let $A_h,A_\infty \in \mathcal M^{\rm sym}(\lambda,\Lambda,\Omega,A_0)$ and $F_h,F_\infty\colon L^2(\R^n)\to [0,\infty]$ be the nonlocal energies, respectively defined in~\eqref{eq:Fh} and~\eqref{eq:Finfty}.
Then,
\[
\text{$(A_h)_h$ $H$-converges to $A_\infty$ in $H^s_0(\Omega)$}
\]
if and only if
\[
\text{$(F_h)_h$ $\Gamma$-converges to $F_\infty$ strongly in $L^2(\R^n)$}.
\]
\end{theorem}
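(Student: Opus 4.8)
The plan is to obtain the equivalence by sandwiching it between the two auxiliary results anticipated in the introduction: the identification of $\Gamma$-convergence of the nonlocal energies with nonlocal $G$-convergence (Lemma~\ref{lem:equiv_Gamma-G}), and the convergence of the nonlocal momenta under $\Gamma$-convergence (Proposition~\ref{prop:convmom}). Granted these, one implication is immediate. If $(A_h)_h$ $H$-converges to $A_\infty$ in $H^s_0(\Omega)$, then by Definition~\ref{def:Hscon} we have $u_h\to u_\infty$ weakly in $H^s_0(\Omega)$ for every $f\in H^{-s}(\Omega)$, that is, $(A_h)_h$ $G$-converges to $A_\infty$ in the sense of Definition~\ref{def:nonlocalG}; Lemma~\ref{lem:equiv_Gamma-G} then yields that $(F_h)_h$ $\Gamma$-converges to $F_\infty$ strongly in $L^2(\R^n)$. (An alternative route for this direction passes through the local picture: nonlocal $H$-convergence forces the restrictions $B_h\coloneqq A_h|_\Omega$ to $H$-converge locally to $B_\infty\coloneqq A_\infty|_\Omega$ — combine the local $H$-compactness of Proposition~\ref{prop:Tartar} with the uniqueness of the nonlocal $H$-limit, Lemma~\ref{lem:uniquenessgeneral} — and one then chains the classical equivalence, in the symmetric case, between local $H$-convergence and $\Gamma$-convergence of the associated local energies (see, e.g.,~\cite{DalMaso}) with Proposition~\ref{prop:loc-nonloc}.)

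For the converse, assume that $(F_h)_h$ $\Gamma$-converges to $F_\infty$ strongly in $L^2(\R^n)$. By Lemma~\ref{lem:equiv_Gamma-G} this gives nonlocal $G$-convergence of $(A_h)_h$ to $A_\infty$, hence $u_h\to u_\infty$ weakly in $H^s_0(\Omega)$ for every $f$. The only condition in Definition~\ref{def:Hscon} that still has to be verified is then the convergence of the momenta, $A_h\nabla^s u_h\to A_\infty\nabla^s u_\infty$ weakly in $L^2(\R^n;\R^n)$, which is precisely Proposition~\ref{prop:convmom}; combining the two facts gives nonlocal $H$-convergence and closes the argument. Thus the whole theorem reduces to Proposition~\ref{prop:convmom}, which I regard as the main obstacle.

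To establish Proposition~\ref{prop:convmom} I would first promote $\Gamma$-convergence to convergence of energies along the solutions: the functionals $u\mapsto F_h(u)-\langle f,u\rangle_{H^{-s}(\Omega)\times H^s_0(\Omega)}$ are equicoercive on $L^2(\R^n)$ thanks to the Poincaré inequality (Proposition~\ref{prop:Poincare}) and the Rellich theorem (Proposition~\ref{prop:Rellich}), so $\Gamma$-convergence forces the minimizers $u_h$ to converge to $u_\infty$ and the minimum values to converge, whence $F_h(u_h)\to F_\infty(u_\infty)$, i.e.
\[
\int_{\R^n}A_h(x)\nabla^s u_h(x)\cdot\nabla^s u_h(x)\,{\rm d}x\longrightarrow\int_{\R^n}A_\infty(x)\nabla^s u_\infty(x)\cdot\nabla^s u_\infty(x)\,{\rm d}x.
\]
Let $m\in L^2(\R^n;\R^n)$ be a weak limit of $A_h\nabla^s u_h$ along a subsequence. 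Outside $\Omega$ one has $A_h=A_0$, so $m=A_0\nabla^s u_\infty=A_\infty\nabla^s u_\infty$ a.e.\ in $\R^n\setminus\Omega$, exactly as in the proof of Theorem~\ref{thm:compactness}. Inside $\Omega$, I would identify $m=A_\infty\nabla^s u_\infty$ by a compensated-compactness (div-curl) argument performed, after the Riesz substitution $w_h\coloneqq I_{1-s}u_h$ — so that $\nabla^s u_h=\nabla w_h$ and $w_h\to w_\infty\coloneqq I_{1-s}u_\infty$ strongly in $L^2_{\rm loc}(\R^n)$ by Proposition~\ref{prop:Riesz} and Proposition~\ref{prop:Rellich} — in the local variables, combining the energy convergence above with the ellipticity bounds~\eqref{eq:unif-bound1}--\eqref{eq:unif-bound2} and the symmetry of the $A_h$; the required cut-off manipulations are those already appearing in the proofs of Theorem~\ref{thm:compactness} and Proposition~\ref{prop:loc-nonloc}. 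The delicate point is exactly this last identification: the Riesz potential does not interact conveniently with $\div^s$ — it is $I_{1-s}(A_h\nabla^s u_h)$, not $A_h\nabla^s u_h$ itself, whose distributional divergence is under control — so the compensated-compactness step must be set up through the auxiliary field $(-\Delta)^{\frac{1-s}{2}}w_h$ and the fractional Leibniz rule (Proposition~\ref{prop:Leibniz}), precisely as in Theorem~\ref{thm:compactness}.

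A more economical alternative, if one accepts using the $H$-compactness already proved in Theorem~\ref{thm:sym-compact}, is available for the converse implication: every subsequence of $(A_h)_h$ admits a further subsequence $H$-converging to some $\widehat A_\infty\in\mathcal M^{\rm sym}(\lambda,\Lambda,\Omega,A_0)$; by the direct implication established above, $(F_h)_h$ then $\Gamma$-converges along that subsequence to the nonlocal energy of $\widehat A_\infty$, so $\widehat A_\infty=A_\infty$ by uniqueness of the $\Gamma$-limit and Lemma~\ref{lem:uniqueness}; a Urysohn-type argument finally upgrades this to $H$-convergence of the full sequence $(A_h)_h$ to $A_\infty$, which in particular yields the convergence of the momenta. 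I would present the first, self-contained route as the main proof, since it also furnishes the purely variational proof of Theorem~\ref{thm:sym-compact} announced in the introduction.
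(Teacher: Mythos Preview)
Your reduction of the theorem to Lemma~\ref{lem:equiv_Gamma-G} plus Proposition~\ref{prop:convmom} is exactly the paper's route, and the implication $H\Rightarrow\Gamma$ is handled identically. For $\Gamma\Rightarrow H$, the paper also first restricts to $f\in L^2(\R^n)$ (so that $u\mapsto\langle f,u\rangle$ is an $L^2$-continuous perturbation and \cite[Proposition~6.21]{DalMaso} applies), deduces $F_h(u_h)\to F_\infty(u_\infty)$ from the Fundamental Theorem of $\Gamma$-convergence, invokes Proposition~\ref{prop:convmom}, and then passes to general $f\in H^{-s}(\Omega)$ by density. You skip the $L^2$/$H^{-s}$ splitting; this can be justified (bounded-energy sequences are bounded in $H^s_0(\Omega)$, and $\langle f,\cdot\rangle$ is continuous for weak $H^s_0$-convergence), so it is a cosmetic difference rather than a gap.

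The substantive divergence is your proposed proof of Proposition~\ref{prop:convmom}. The paper does \emph{not} argue by compensated compactness. Instead it transfers, via Proposition~\ref{prop:loc-nonloc}, to local $\Gamma$-convergence of the $G_h$, and then uses~\cite[Theorem~22.4]{DalMaso} and~\cite[Theorem~4.2]{ADMZ} to obtain $\Gamma$-convergence of the \emph{perturbed} local functionals $v\mapsto\tfrac12\int_\Omega A_h(\nabla v+\Phi)\cdot(\nabla v+\Phi)$ for every fixed $\Phi\in L^2(\Omega;\R^n)$. Testing this along $v_h=I_{1-s}u_h$ with $\Phi=t_i\Psi$, adding the $\R^n\setminus\Omega$ contribution by weak lower semicontinuity, dividing by $t_i$ and sending $t_i\to0$ along a diagonal sequence converts the $\Gamma$-liminf inequality into the one-sided estimate $\mathcal F'_\infty(\nabla^s u_\infty)[\Psi]\le\liminf_h\mathcal F'_h(\nabla^s u_h)[\Psi]$; replacing $\Psi$ by $-\Psi$ gives~\eqref{eq:gateaux}. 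This is a purely variational ``differentiate the $\Gamma$-limit'' argument and never touches oscillating test functions.

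Your div-curl sketch, by contrast, is incomplete as written: energy convergence $F_h(u_h)\to F_\infty(u_\infty)$, ellipticity, and symmetry alone do not identify the weak limit $m$ inside $\Omega$. To run a genuine compensated-compactness step you need oscillating test sequences as in Step~1 of Theorem~\ref{thm:compactness}, and producing those amounts to knowing that $(B_h)_h=(A_h|_\Omega)_h$ $H$-converges locally --- which you can indeed extract from Proposition~\ref{prop:loc-nonloc} together with the classical local $\Gamma\Leftrightarrow H$ equivalence, but then you have essentially rerun Theorem~\ref{thm:compactness} rather than given the self-contained variational argument you advertise. Your ``economical alternative'' via Theorem~\ref{thm:sym-compact} and a Urysohn argument is correct, but, as you note yourself, it presupposes $H$-compactness and so cannot serve as the independent variational proof of Theorem~\ref{thm:sym-compact} that the paper is after.
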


The proof of Theorem~\ref{thm:equiv_Gamma-H} requires two preliminary results. In Lemma~\ref{lem:equiv_Gamma-G}, we first show the equivalence between the $\Gamma$-convergence of the nonlocal energies and the nonlocal $G$-convergence introduced in Definition~\ref{def:nonlocalG}, which corresponds to the sole convergence of the solutions. Later, in Proposition~\ref{prop:convmom}, we show that the $\Gamma$-convergence of the nonlocal energies also implies the convergence of the momenta, as required in Definition~\ref{def:Hscon}.

The proof of the equivalence between the nonlocal $G$-convergence and the $\Gamma$-convergence of the associated nonlocal energies can be obtained as an application of~\cite[Theorem~13.12]{DalMaso}, with $Y=H^s_0(\Omega)$ and $X=L^2(\R^n)$. For the reader's convenience, we provide below a complete proof, following the techniques presented in~\cite[Theorem~13.5]{DalMaso}.

\begin{lem}\label{lem:equiv_Gamma-G}
For every $h\in\mathbb N$, let $A_h,A_\infty \in \mathcal M^{\rm sym}(\lambda,\Lambda,\R^n)$ and let $F_h,F_\infty\colon L^2(\R^n)\to [0,\infty]$ be the nonlocal energies, respectively defined in~\eqref{eq:Fh} and~\eqref{eq:Finfty}.
Then
\[
\text{$(A_h)_h$ $G$-converges to $A_\infty$ in $H^s_0(\Omega)$}
\]
if and only if
\[
\text{$(F_h)_h$ $\Gamma$-converges to $F_\infty$ strongly in $L^2(\R^n)$}.
\]
\end{lem}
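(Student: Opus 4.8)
The plan is to prove the two implications separately, exploiting the standard fact that, since $A_h$ is symmetric and coercive, the weak solution $u_h$ of $(P_h^f)$ is the unique minimiser over $L^2(\R^n)$ of the perturbed functional $\Phi_h^f\coloneqq F_h-\langle f,\,\cdot\,\rangle_{H^{-s}(\Omega)\times H^s_0(\Omega)}$ (understood to be $+\infty$ outside $H^s_0(\Omega)$), together with the dual ``energy identity'' $2F_h(u_h)=\langle f,u_h\rangle_{H^{-s}(\Omega)\times H^s_0(\Omega)}$, obtained by testing $(P_h^f)$ with $u_h$ itself. The two relevant topologies — strong $L^2(\R^n)$ for $\Gamma$-convergence and weak $H^s_0(\Omega)$ for $G$-convergence — will be bridged by the Poincar\'e inequality (Proposition~\ref{prop:Poincare}), which turns any bound on $F_h(u_h)$ into a bound in $H^s_0(\Omega)$, and by the Rellich theorem (Proposition~\ref{prop:Rellich}), which upgrades weak $H^s_0(\Omega)$-convergence to strong $L^2(\R^n)$-convergence (recall that functions in $H^s_0(\Omega)$ vanish a.e.\ outside $\Omega$).

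For the implication ``$G$-convergence $\Rightarrow$ $\Gamma$-convergence'' I would argue directly, verifying the two inequalities of Definition~\ref{def:Gamma} for the given $F_\infty$. For the $\Gamma$-$\limsup$ inequality, given $u\in H^s_0(\Omega)$ (the case $u\in L^2(\R^n)\setminus H^s_0(\Omega)$ being trivial, as $F_\infty(u)=\infty$), I set $f\coloneqq-\div^s(A_\infty\nabla^s u)\in H^{-s}(\Omega)$, so that $u=u_\infty(f)$ by Lemma~\ref{lem:LaxMilgram}, and take as recovery sequence $u_h\coloneqq u_h(f)$: by $G$-convergence $u_h\rightharpoonup u$ weakly in $H^s_0(\Omega)$, hence strongly in $L^2(\R^n)$ by Rellich, and the energy identity gives $F_h(u_h)=\tfrac12\langle f,u_h\rangle\to\tfrac12\langle f,u\rangle=F_\infty(u)$. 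For the $\Gamma$-$\liminf$ inequality, let $u_h\to u$ strongly in $L^2(\R^n)$; one may assume $\liminf_h F_h(u_h)=\lim_h F_h(u_h)<\infty$, so that $(u_h)_h$ is bounded in $H^s_0(\Omega)$ by coercivity and Poincar\'e, whence $u_h\rightharpoonup u$ weakly in $H^s_0(\Omega)$. Setting $g\coloneqq-\div^s(A_\infty\nabla^s u)$ and $z_h\coloneqq u_h(g)$ (so that $z_h\rightharpoonup u$ and $F_h(z_h)\to F_\infty(u)$ as above), the symmetry and non-negativity of $A_h$ yield the convexity estimate $F_h(u_h)\ge F_h(z_h)+\langle g,u_h-z_h\rangle$ (testing the equation for $z_h$ with $u_h-z_h\in H^s_0(\Omega)$), and passing to the limit, using $\langle g,u_h-z_h\rangle\to 0$, gives $\lim_h F_h(u_h)\ge F_\infty(u)$.

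For the converse ``$\Gamma$-convergence $\Rightarrow$ $G$-convergence'', fix $f\in H^{-s}(\Omega)$. The minimality $\Phi_h^f(u_h)\le\Phi_h^f(0)=0$ together with Poincar\'e's inequality shows that $(u_h)_h$ is bounded in $H^s_0(\Omega)$; along a subsequence $u_h\rightharpoonup u^\ast$ weakly in $H^s_0(\Omega)$ and, by Rellich, $u_h\to u^\ast$ strongly in $L^2(\R^n)$. The crucial observation is that, although $v\mapsto\langle f,v\rangle$ need not be continuous on $L^2(\R^n)$, it \emph{is} continuous along any sequence with uniformly bounded $F_h$-energy (again by Poincar\'e), so that both the $\Gamma$-$\liminf$ inequality for $F_h$ at $u^\ast$ and a $\Gamma$-$\limsup$ recovery sequence for any competitor $v\in H^s_0(\Omega)$ pass to the perturbed functionals $\Phi_h^f$; comparing $u_h$ with these recovery sequences then identifies $u^\ast$ as a minimiser of $\Phi_\infty^f=F_\infty-\langle f,\,\cdot\,\rangle$. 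By uniqueness (Lemma~\ref{lem:LaxMilgram}), $u^\ast=u_\infty(f)$, and since the subsequence was arbitrary the whole sequence converges, which is exactly the nonlocal $G$-convergence of Definition~\ref{def:nonlocalG}.

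I expect the main obstacle to be precisely this topology mismatch in the implication ``$\Gamma\Rightarrow G$'': since $\Gamma$-convergence is taken in the strong $L^2(\R^n)$ topology, in which the linear perturbation $\langle f,\,\cdot\,\rangle$ is unbounded, one cannot invoke the stability of $\Gamma$-convergence under continuous perturbations off the shelf, and must instead restrict to sequences of bounded energy — where Poincar\'e's inequality restores the required continuity — and then combine the $\Gamma$-$\liminf$ and $\Gamma$-$\limsup$ inequalities carefully to pin down $u^\ast$ as the unique minimiser of the limit functional. The implication ``$G\Rightarrow\Gamma$'', by contrast, is essentially algebraic once the energy identity and the convexity estimate are available, and it requires neither a $\Gamma$-compactness argument nor uniqueness of the $\Gamma$-limit.
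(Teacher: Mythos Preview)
Your proof is correct. In the direction $G\Rightarrow\Gamma$ it coincides with the paper's argument: the recovery sequence is taken to be $u_h(f)$ for $f=-\div^s(A_\infty\nabla^s u)$, and the $\Gamma$-$\liminf$ inequality follows from the same convexity estimate (your inequality $F_h(u_h)\ge F_h(z_h)+\langle g,u_h-z_h\rangle$ is the paper's identity $A_h\xi_1\cdot\xi_1-A_h\xi_2\cdot(2\xi_1-\xi_2)\ge 0$, rewritten via the equation for $z_h$).

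In the direction $\Gamma\Rightarrow G$ your route differs from the paper's. The paper proceeds in two steps: first it takes $f\in L^2(\R^n)$, where $v\mapsto\int fv$ is genuinely continuous on $L^2(\R^n)$, so that the standard stability of $\Gamma$-convergence under continuous perturbations and the Fundamental Theorem of $\Gamma$-convergence apply off the shelf; then it upgrades to $f\in H^{-s}(\Omega)$ by a density argument, using the uniform estimate of Lemma~\ref{lem:LaxMilgram}. You instead attack general $f\in H^{-s}(\Omega)$ directly, observing that although $\langle f,\cdot\rangle$ is not $L^2$-continuous, it \emph{is} continuous along sequences with bounded $F_h$-energy (since these are bounded in $H^s_0(\Omega)$ by Poincar\'e, hence weakly convergent there), and this is enough to pass the minimality comparison $\Phi_h^f(u_h)\le\Phi_h^f(v_h)$ to the limit for any recovery sequence $(v_h)_h$ of a competitor $v$. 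Your approach is more self-contained and avoids the density step; the paper's is perhaps cleaner in that it invokes named results (\cite[Proposition~6.21 and Theorem~7.8]{DalMaso}) rather than unpacking them. Both lead to the same identification $u^\ast=u_\infty(f)$ via the uniqueness in Lemma~\ref{lem:LaxMilgram}.
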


\begin{proof}
{\bf $\Gamma$-convergence implies $G$-convergence.}

We assume that $(F_h)_h$ $\Gamma$-converges to $F_\infty$ strongly in $L^2(\R^n)$ and we first show that~\eqref{eq:Gscon} holds for every $f\in L^2(\R^n)$.
Later, by a density argument, we extend the validity of~\eqref{eq:Gscon} for all $f\in H^{-s}(\Omega)$, which implies the $G$-convergence of $(A_h)_h$ to $A_\infty$ in $H^s_0(\Omega)$.

{\bf Step 1.} We fix $g\in L^2(\R^n)$ and, for every $h\in\mathbb N$, we define $F_h^g,F_\infty^g\colon L^2(\R^n)\to [0,\infty]$ as
\begin{equation}\label{eq:Fhf}
F_h^g(u)\coloneqq F_h(u)+\int_{\R^n} g(x)u(x)\,{\rm d} x\quad\text{and}\quad F_\infty^g(u)\coloneqq F_\infty(u)+\int_{\R^n} g(x)u(x)\,{\rm d} x
\end{equation}
for all $u\in L^2(\R^n)$.
Since we have perturbed with continuity $F_h$ and $F_\infty$, then
\[
\text{$(F_h^g)_h$ $\Gamma$-converges to $F_\infty^g$ strongly in $L^2(\R^n)$},
\]
in virtue of~\cite[Proposition~6.21]{DalMaso}.

We note that the solutions $w_h\in H^s_0(\Omega)$ and $w_\infty\in H^s_0(\Omega)$ of problems $(P_h^g)$ and $(P_\infty^g)$, whose existence is guaranteed by Lemma~\ref{lem:LaxMilgram}, minimise the energies $F_h^g$ and $F_\infty^g$, respectively, i.e.
\[
F_h^g(w_h)=\min_{u\in L^2(\R^n)}F_h^g(u)\quad\text{and}\quad F_\infty^g(w_\infty)=\min_{u\in L^2(\R^n)}F_\infty^g(u).
\]
Therefore, by the Fundamental Theorem of $\Gamma$-convergence (see e.g.~\cite[Theorem~7.8]{DalMaso}), we get
\[
\text{$w_h\to w_\infty$ strongly in $L^2(\R^n)$ as $h\to\infty$}.
\]
In particular, since $(w_h)_h\subset H^s_0(\Omega)$ is uniformly bounded, we conclude that 
\begin{equation}\label{eq:whjwh}
\text{$w_h\to w_\infty$ weakly in $H^s_0(\Omega)$ as $h\to\infty$}.
\end{equation}

{\bf Step 2.} We fix now $f\in H^{-s}(\Omega)$ and we let $u_h,u_\infty\in H^s_0(\Omega)$ denote the solutions of the problems $(P_h^f)$ and $(P_\infty^f)$, respectively.

Since the embedding $H^s_0(\Omega)\subset L^2(\R^n)$ is continuous and dense, so does the embedding $L^2(\R^n)\subset H^{-s}(\Omega)$. Therefore we can find a sequence $(f_j)_j\subset L^2(\R^n)$ such that
\[
\text{$f_j\to f$ strongly in $H^{-s}(\Omega)$ as $j\to\infty$}.
\]

For all $j\in\mathbb N$, let $u_h^j,u_\infty^j\in H^s_0(\Omega)$ be the solutions of problems $(P_h^{f_j})$ and $(P_\infty^{f_j})$, respectively. Fixed $g\in H^{-s}(\Omega)$, by Proposition~\ref{prop:Poincare} and Lemma~\ref{lem:LaxMilgram}, we obtain
\begin{align*}
&|\langle g,u_h-u_\infty\rangle_{H^{-s}(\Omega)\times H^s_0(\Omega)}|\le |\langle g,u_h^j-u_\infty^j\rangle_{H^{-s}(\Omega)\times H^s_0(\Omega)}|+C\|g\|_{H^{-s}(\Omega)}\|f_j-f\|_{H^{-s}(\Omega)}.
\end{align*}
Hence, in view of~\eqref{eq:whjwh}, by letting first $h\to\infty$ and then $j\to\infty$, we obtain~\eqref{eq:Gscon}, and so
\[
\text{$(A_h)_h$ $G$-converges to $A_\infty$ in $H^s_0(\Omega)$}.
\]

{\bf $G$-convergence implies $\Gamma$-convergence.}
We now show that 
\[
\text{$(F_h)_h$ $\Gamma$-converges to $F_\infty$ strongly in $L^2(\R^n)$}, 
\]
in accordance with Definition~\ref{def:Gamma}, by assuming the $G$-convergence of the associated matrices.

{\bf $\Gamma$-liminf inequality}. 
We fix $u_h,u\in L^2(\R^n)$, $h\in\mathbb{N}$, such that $u_h\to u$ strongly in $L^2(\R^n)$ as $h\to\infty$ and, up to a not relabeled subsequence, we assume that
\[
\lim_{h\to\infty}F_h(u_h)=\liminf_{h\to\infty}F_h(u_h)<\infty\quad\text{and}\quad\sup_{h\in\mathbb N}F_h(u_h)\le C,
\]
for some positive constant $C$, the conclusion being otherwise trivial.

Then, $(u_h)_h\subset H^s_0(\Omega)$ and there exists another positive constant $C$ such that 
\[
\|u_h\|_{H^s_0(\Omega)}\le C\quad\text{for all $h\in\mathbb N$},
\]
which gives that also $u\in H^s_0(\Omega)$ and
\[
u_h\to u\quad\text{weakly in $H^s_0(\Omega)$ as $h\to\infty$}.
\]

For every $h\in\mathbb{N}$, define
\begin{equation}\label{eq:sourceterm}
f\coloneqq -\div^s(A_\infty\nabla^s u)\in H^{-s}(\Omega)
\end{equation}
and consider the unique weak solution $w_h\in H^s_0(\Omega)$ of problem $(P_h^f)$.
Since 
\[
\text{$(A_h)_h$ $G$-converges to $A_\infty$ in $H^s_0(\Omega)$}
\]
and, by construction, $u$ is the unique weak solution of $(P_\infty^f)$, it holds that
\[
w_h\to u\quad\text{weakly in $H^s_0(\Omega)$ as $h\to\infty$}.
\]
From the one hand, we have that
\[
\lim_{h\to\infty}\frac{1}{2}\int_{\R^n} A_\infty(x)\nabla^s u(x)\cdot (2\nabla^s u_h(x)-\nabla^s w_h(x))\,{\rm d} x=F_\infty(u).
\]
From the other hand, by~\eqref{eq:sourceterm}
\begin{align*}
&\int_{\R^n} A_\infty(x)\nabla^s u(x)\cdot (2\nabla^s u_h(x)-\nabla^s w_h(x))\,{\rm d} x\\
&=\langle f,2u_h-w_h\rangle_{H^{-s}(\Omega)\times H^s_0(\Omega)}=\int_{\R^n} A_h(x)\nabla^s w_h(x)\cdot (2\nabla^s u_h(x)-\nabla^s w_h(x))\,{\rm d} x\\
&\le \int_{\R^n} A_h(x)\nabla^s u_h(x)\cdot\nabla^s u_h(x)\,{\rm d} x=2F_h(u_h),
\end{align*}
being
$$
A_h(x)\xi_1\cdot\xi_1-A_h(x)\xi_2\cdot (2\xi_1-\xi_2)=A_h(x)(\xi_1-\xi_2)\cdot(\xi_1-\xi_2)\ge 0
$$
for a.e.\ $x\in{\R^n}$ and all $\xi_1,\xi_2\in\R^n$.

Hence,
$$
\liminf_{h\to\infty}F_h(u_h)=\lim_{h\to\infty}F_h(u_h)\ge \lim_{h\to\infty}\frac{1}{2}\int_{\R^n} A_\infty(x)\nabla^s u(x)\cdot (2\nabla^s u_h(x)-\nabla^s w_h(x))\,{\rm d} x=F_\infty(u),
$$
which conclude the proof of the $\Gamma$-liminf inequality.
\smallskip

{\bf $\Gamma$-limsup inequality}. We fix $u\in H^s_0(\Omega)$, the conclusion being otherwise trivial by the definition of $F_\infty$, we set $f\coloneqq -\div(A_\infty \nabla^s u)$, and we consider $w_h\in H^s_0(\Omega)$, the unique weak solution of $(P_h^f)$.
By the $G$-convergence of $(A_h)_h$ towards $A_\infty$ in $H^s_0(\Omega)$ and, by Proposition~\ref{prop:Rellich},
\[
w_h\to u\quad\text{weakly in $H^s_0(\Omega)$}\quad\text{and}\quad w_h\to u\quad\text{strongly in $L^2(\R^n)$ as $h\to\infty$}.
\]
Moreover,
\begin{align*}
\lim_{h\to\infty}F_h(w_h)&=\lim_{h\to\infty}\frac{1}{2}\int_{\R^n} A_h(x)\nabla^s w_h(x)\cdot \nabla^s w_h(x)\,{\rm d} x\\
&=\lim_{h\to\infty}\frac{1}{2}\langle f, w_h\rangle_{H^{-s}(\Omega)\times H^s_0(\Omega)}=\frac{1}{2}\langle f,u\rangle_{H^{-s}(\Omega)\times H^s_0(\Omega)}\\
&=\frac{1}{2}\int_{\R^n} A_\infty(x)\nabla^s u(x)\cdot \nabla^s u(x)\,{\rm d} x=F_\infty(u).
\end{align*}
Hence the $\Gamma$-limsup inequality also holds (the limit is actually achieved) and this concludes the proof of the $\Gamma$-convergence of $(F_h)_h$ towards $F_\infty$.
\end{proof}

\begin{rmk}
If we consider only the equivalence between the nonlocal $G$-convergence and the $\Gamma$-convergence of the associated nonlocal energies $(F_h)_h$, then the assumption of fixing the matrices $(A_h)_h$ outside the reference domain $\Omega$ may be omitted. 
On the other hand, for the compactness of the $\Gamma$-convergence (see Theorem~\ref{thm:Gammacompactness}) and for the following equivalence between the nonlocal $H$-convergence and the $\Gamma$-convergence, we need to consider the subclass $\mathcal M^{\rm sym}(\lambda,\Lambda,\Omega,A_0)$, for a given $A_0\in\mathcal M^{\rm sym}(\lambda,\Lambda,\R^n)$. 
\end{rmk}

In view of Lemma~\ref{lem:equiv_Gamma-G}, to prove Theorem~\ref{thm:equiv_Gamma-H} it is sufficient to show that the $\Gamma$-convergence of the nonlocal energies $(F_h)_h$ to $F_\infty$ also implies the convergence of momenta, as required in Definition~\ref{def:Hscon}. To this aim, we follow the strategies adopted in~\cite[Lemma~4.11]{DalMasoCrack} and~\cite[Theorem~4.5]{ADMZ}.
We define the functionals $\mathcal F_h,\mathcal F_\infty\colon L^2(\R^n;\R^n)\to\mathbb{R}$, respectively, as
\begin{align*}
 \mathcal F_h(\Phi)&\coloneqq\frac{1}{2}\int_{\R^n}A_h(x)\Phi(x)\cdot\Phi(x)\,{\rm d}x\quad\text{for all $\Phi\in L^2(\R^n;\R^n)$},\\
 \mathcal F_\infty(\Phi)&\coloneqq\frac{1}{2}\int_{\R^n}A_\infty(x)\Phi(x)\cdot\Phi(x)\,{\rm d}x\quad\text{for all $\Phi\in L^2(\R^n;\R^n)$},
\end{align*}
and consider their Fréchet derivatives $\mathcal{F}_h'$ and $\mathcal F_\infty'$, which are given by
\begin{align*}
 \mathcal F_h'(\Phi)[\Psi]=\int_{\R^n}A_h(x)\Phi(x)\cdot\Psi(x)\,{\rm d}x\quad\text{and}\quad \mathcal F_\infty'(\Phi)[\Psi]=\int_{\R^n}A_\infty(x)\Phi(x)\cdot\Psi(x)\,{\rm d}x
\end{align*}
for all $\Phi,\Psi\in L^2(\R^n;\R^n)$.

Note that $\mathcal F_h'$ and $\mathcal F_\infty'$ identify the momenta for the functionals $F_h$ and $F_\infty$, respectively. 
Indeed, given $(u_h)_h\subset H^s_0(\Omega)$ and $u_\infty \in H^s_0(\Omega)$, then the convergence
\[
\mathcal F_h'(\nabla^s u_h)[\Psi]\to\mathcal F_\infty'(\nabla^s u_\infty)[\Psi]\quad\text{for all $\Psi\in L^2(\Omega;\R^n)$}
\]
is equivalent to
\[
A_h\nabla^s u_h\to A_\infty\nabla^s u_\infty\quad\text{weakly in $ L^2(\R^n;\R^n)$}.
\]
We have the following result.

\begin{prop}\label{prop:convmom} 
Let $A_0\in\mathcal M^{\rm sym}(\lambda,\Lambda,\R^n)$. For every $h\in\mathbb N$, let $A_h,A_\infty \in \mathcal M^{\rm sym}(\lambda,\Lambda,\Omega,A_0)$ and $F_h,F_\infty\colon L^2(\R^n)\to [0,\infty]$ be the nonlocal energies, respectively defined in~\eqref{eq:Fh} and~\eqref{eq:Finfty}.
Assume that
\[
\text{$(F_h)_h$ $\Gamma$-converges to $F_\infty$ strongly in $L^2(\R^n)$},
\]
 and let $(u_h)_h\in H^s_0(\Omega)$ and $u_\infty \in H^s_0(\Omega)$ satisfy
\begin{align}\label{eq:recovery}
u_h\to u_\infty\quad\text{strongly in $L^2(\R^n)$ as $h\to\infty$}\quad\text{and}\quad F_h(u_h)\to F_\infty(u_\infty)\quad\text{as $h\to\infty$}.
\end{align}
Then, the convergence of the momenta associated with $(F_h)_h$ and $F_\infty$ holds, i.e.
\begin{align}\label{eq:gateaux}
\mathcal F_h'(\nabla^s u_h)[\Psi]\to\mathcal F_\infty'(\nabla^s u_\infty)[\Psi]\quad\text{for all $\Psi\in L^2(\R^n;\R^n)$ as $h\to\infty$}.
\end{align}
\end{prop}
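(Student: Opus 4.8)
The plan is to identify the weak $L^2$-limit of the momenta $A_h\nabla^s u_h$ and to show that it is forced to be $A_\infty\nabla^s u_\infty$. First, since $F_h(u_h)\to F_\infty(u_\infty)<\infty$ and $(A_h)_h\subset\mathcal M^{\rm sym}(\lambda,\Lambda,\Omega,A_0)$, the sequence $(u_h)_h$ is bounded in $H^s_0(\Omega)$; hence, up to a not relabeled subsequence, $u_h\to u_\infty$ weakly in $H^s_0(\Omega)$ (so $\nabla^s u_h\to\nabla^s u_\infty$ weakly in $L^2(\R^n;\R^n)$), while $(A_h\nabla^s u_h)_h$ is bounded in $L^2(\R^n;\R^n)$, so $A_h\nabla^s u_h\to m$ weakly in $L^2(\R^n;\R^n)$ for some $m$. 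By a standard subsequence (Urysohn) argument it suffices to prove $m=A_\infty\nabla^s u_\infty$ a.e.\ in $\R^n$. Outside $\Omega$ this is immediate, since $A_h=A_0$ a.e.\ there and thus $A_h\nabla^s u_h=A_0\nabla^s u_h\to A_0\nabla^s u_\infty=A_\infty\nabla^s u_\infty$ weakly in $L^2(\R^n\setminus\Omega;\R^n)$.

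To identify $m$ inside $\Omega$, I would first exploit the quadratic structure via a perturbation argument. Fix $\psi\in H^s_0(\Omega)$ and $t\in\R$: then $u_h+t\psi\to u_\infty+t\psi$ strongly in $L^2(\R^n)$, and the $\Gamma$-liminf inequality, after expanding the quadratic functionals, reads
\[
F_\infty(u_\infty)+t\!\int_{\R^n}\!A_\infty\nabla^s u_\infty\cdot\nabla^s\psi\,{\rm d}x+t^2F_\infty(\psi)\le\liminf_{h\to\infty}\!\Big(F_h(u_h)+t\!\int_{\R^n}\!A_h\nabla^s u_h\cdot\nabla^s\psi\,{\rm d}x+t^2F_h(\psi)\Big).
\]
Since $F_h(u_h)\to F_\infty(u_\infty)$, since $\int_{\R^n}A_h\nabla^s u_h\cdot\nabla^s\psi\,{\rm d}x\to\int_{\R^n}m\cdot\nabla^s\psi\,{\rm d}x$ (the second factor being fixed), and since $\liminf_h F_h(\psi)\ge F_\infty(\psi)$ by the $\Gamma$-liminf inequality applied to the constant sequence, dividing by $t$ and letting $t\to0^+$ and then $t\to0^-$ yields
\[
\int_{\R^n}\big(m-A_\infty\nabla^s u_\infty\big)\cdot\nabla^s\psi\,{\rm d}x=0\qquad\text{for all }\psi\in H^s_0(\Omega),
\]
i.e.\ $-\div^s m=-\div^s(A_\infty\nabla^s u_\infty)$ in $H^{-s}(\Omega)$.

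This fractional-divergence identity, together with the equality $m=A_\infty\nabla^s u_\infty$ outside $\Omega$, does not by itself determine $m$ on $\Omega$, and closing this gap is the crux. I would do so by passing to the local setting through the Riesz potential, as in Proposition~\ref{prop:loc-nonloc}: set $v_h\coloneqq I_{1-s}u_h$, $v\coloneqq I_{1-s}u_\infty$, $B_h\coloneqq A_h|_\Omega$, $B_\infty\coloneqq A_\infty|_\Omega$. By Proposition~\ref{prop:Riesz} and Proposition~\ref{prop:Rellich}, $v_h,v\in H^1(\Omega)$ with $\nabla v_h=\nabla^s u_h$, $\nabla v=\nabla^s u_\infty$ a.e., and $v_h\to v$ strongly in $L^2(\Omega)$ and weakly in $H^1(\Omega)$; moreover, by Proposition~\ref{prop:loc-nonloc}, the local energies $(G_h)_h$ associated with $B_h$ $\Gamma$-converge to $G_\infty$ strongly in $L^2(\Omega)$. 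Writing $F_h(u_h)=G_h(v_h)+\frac{1}{2}\int_{\R^n\setminus\Omega}A_0\nabla^s u_h\cdot\nabla^s u_h\,{\rm d}x$, and likewise for $F_\infty(u_\infty)$, and observing that both summands are sequentially weakly lower semicontinuous (the first along $v_h\to v$ in $L^2(\Omega)$ by the $\Gamma$-liminf inequality for $G_h$, the second along $\nabla^s u_h\to\nabla^s u_\infty$ weakly in $L^2(\R^n\setminus\Omega;\R^n)$ by convexity), the convergence $F_h(u_h)\to F_\infty(u_\infty)$ forces each summand to converge separately; in particular $G_h(v_h)\to G_\infty(v)$, so $(v_h)_h$ is a recovery sequence for $v$. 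Invoking the convergence-of-momenta theorem for $\Gamma$-converging local quadratic functionals (following the strategy of \cite[Lemma~4.11]{DalMasoCrack} and \cite[Theorem~4.5]{ADMZ}) gives $B_h\nabla v_h\to B_\infty\nabla v$ weakly in $L^2(\Omega;\R^n)$, i.e.\ $A_h\nabla^s u_h\to A_\infty\nabla^s u_\infty$ weakly in $L^2(\Omega;\R^n)$. Combined with the convergence outside $\Omega$, this yields $A_h\nabla^s u_h\to A_\infty\nabla^s u_\infty$ weakly in $L^2(\R^n;\R^n)$, which is exactly \eqref{eq:gateaux}.

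The hard part is this final identification: the perturbation argument only controls $\div^s m$ on $\Omega$, and upgrading it to the pointwise equality $m=A_\infty\nabla^s u_\infty$ needs the extra rigidity coming from the local momenta-convergence result, whose proof couples the above quadratic-perturbation argument with a compensated-compactness (div--curl) argument --- equivalently, it tests the identified fractional divergence against solutions of the local limit problem, in the spirit of the localisation step in the proof of Theorem~\ref{thm:compactness}. An entirely nonlocal alternative would reproduce that localisation directly via the fractional Leibniz rule (Proposition~\ref{prop:Leibniz}), but it relies on the same compensated-compactness input.
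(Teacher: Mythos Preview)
Your proof is correct, but it is organised differently from the paper's and uses the local momenta-convergence as a black box rather than reproving it.

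The first half of your proposal --- perturbing by $t\nabla^s\psi$ with $\psi\in H^s_0(\Omega)$ --- is a detour: as you yourself note, it only yields $\div^s m=\div^s(A_\infty\nabla^s u_\infty)$ in $H^{-s}(\Omega)$, which is not enough, and none of it is used in your actual argument. You could drop that paragraph entirely.

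Your real argument and the paper's coincide in that both pass to the local picture via $v_h\coloneqq I_{1-s}u_h$ and Proposition~\ref{prop:loc-nonloc}. They then diverge. The paper does \emph{not} show that $(v_h)_h$ is a recovery sequence for $G_\infty$; instead it invokes~\cite[Theorem~4.2]{ADMZ} to obtain the $\Gamma$-convergence of the \emph{perturbed} local functionals $\mathcal G_h^{t\Psi|_\Omega}(w)=\tfrac12\int_\Omega A_h(\nabla w+t\Psi)\cdot(\nabla w+t\Psi)$, applies their $\Gamma$-liminf to $v_h$, adds the outside contribution by weak lower semicontinuity, subtracts the energy identity $\mathcal F_h(\nabla^s u_h)=F_h(u_h)\to F_\infty(u_\infty)$, divides by $t$, and concludes via a diagonal argument. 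In effect, the paper re-runs the proof of the local momenta-convergence at the mixed local/nonlocal level.

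Your route is more modular: from $F_h(u_h)=G_h(v_h)+\tfrac12\int_{\R^n\setminus\Omega}A_0\nabla^su_h\cdot\nabla^su_h$, the convergence of the sum together with the lower semicontinuity of each summand forces $G_h(v_h)\to G_\infty(v)$ and the outside energies to converge as well; hence $(v_h)_h$ is a recovery sequence for the local problem, and the local result of~\cite[Theorem~4.5]{ADMZ} gives $B_h\nabla v_h\to B_\infty\nabla v$ weakly in $L^2(\Omega;\R^n)$ directly. This is a cleaner packaging and, as a by-product, yields the separate convergence of the inside and outside energy contributions, which the paper's proof does not isolate. The trade-off is that you are citing the local momenta-convergence as a known fact, whereas the paper's argument is more self-contained. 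One minor point: the strong $L^2(\Omega)$-convergence of $v_h$ follows from the boundedness of $I_{1-s}\colon L^2(\R^n)\to L^{2^*_{1-s}}(\R^n)$ (Proposition~\ref{prop:Is}) or from the classical Rellich theorem in $H^1(\Omega)$; Proposition~\ref{prop:Rellich}, which concerns $H^s_0$, is not the right citation.
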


\begin{proof}
To prove~\eqref{eq:gateaux}, it is sufficient to show the following inequality
\begin{equation}\label{eq:thesismom}
\mathcal F'(\nabla^s u_\infty)[\Psi]\le\,\liminf_{h\to\infty}\mathcal F'_h(\nabla^s u_h)[\Psi]\quad\text{for all $\Psi\in L^2(\R^n;\R^n)$}.
\end{equation}
Indeed, by replacing $\Psi$ with $-\Psi$, and by the properties of the limit inferior, one can get the desired condition~\eqref{eq:gateaux}.

 For every $h\in\mathbb N$, we define the matrix-valued functions
\[
B_h\coloneqq A_h|_\Omega\in\mathcal M^{\rm sym}(\lambda,\Lambda,\Omega),\qquad B_\infty\coloneqq A_\infty|_\Omega\in\mathcal M^{\rm sym}(\lambda,\Lambda,\Omega),
\]
and the functionals $G_h,G_\infty\colon L^2(\Omega)\rightarrow [0,\infty]$, which are the local energies associated with $B_h$ and $B_\infty$, as in~\eqref{eq:Gh} and~\eqref{eq:Ginfty}, respectively. Thanks to Proposition~\ref{prop:loc-nonloc}, we have
\begin{equation}\label{eq:GhGamma}
\text{$(G_h)_h$ $\Gamma$-converges to $G_\infty$ strongly in $L^2(\Omega)$.}
\end{equation}
 For all $\Phi\in L^2(\Omega;\R^n)$, we define $\mathcal G_h^\Phi,\mathcal G_\infty^\Phi\colon L^2(\Omega)\to [0,\infty]$ as
\begin{align*}
&\mathcal G_h^\Phi(v)\coloneqq
\begin{cases}
\displaystyle\frac{1}{2}\int_{\Omega}A_h(x)(\nabla v(x)+\Phi(x))\cdot(\nabla v(x)+\Phi(x))\,{\rm d}x&\text{if $v\in H^1(\Omega)$},\\
\displaystyle\infty&\text{if $v\in L^2(\Omega)\setminus H^1(\Omega)$},
\end{cases}\\
&\mathcal G_\infty^\Phi(v)\coloneqq
\begin{cases}
\displaystyle\frac{1}{2}\int_{\Omega}A_\infty(x)(\nabla v(x)+\Phi(x))\cdot(\nabla v(x)+\Phi(x))\,{\rm d}x&\text{if $v\in H^1(\Omega)$},\\
\displaystyle\infty&\text{if $v\in L^2(\Omega)\setminus H^1(\Omega)$}.
\end{cases}
\end{align*}
By~\cite[Theorem~22.4]{DalMaso} and~\cite[Theorem~4.2]{ADMZ},~\eqref{eq:GhGamma} implies that for all $\Phi\in L^2(\Omega;\R^n)$
\begin{equation}\label{eq:gammaGPhi}
(\mathcal G_h^\Phi)_h\text{ $\Gamma$-converges to }\mathcal G_\infty^\Phi\text{ strongly in $L^2(\Omega)$}.
\end{equation}

Let $\Psi\in L^2(\R^n;\R^n)$ and $(t_i)_i$ be a sequence of positive numbers, infinitesimal as $i\to\infty$. 
Moreover, let $u_h,u_\infty\in H^s_0(\Omega)$, $h\in\mathbb N$, satisfy~\eqref{eq:recovery}, and define
\begin{align*}
v_h\coloneqq I_{1-s}u_h\in H^1(\Omega)\quad\text{and}\quad v_\infty\coloneqq I_{1-s}u_\infty\in H^1(\Omega). 
\end{align*}
By the continuity of $I_{1-s}\colon L^2(\R^n)\to L^2(\Omega)$, it holds that 
\[
v_h\to v_\infty\quad\text{strongly in $L^2(\Omega)$ as $h\to\infty$}
\]
and, by~\eqref{eq:gammaGPhi} with $\Phi\coloneqq t_i\Psi|_\Omega$, we get
\begin{align}\label{eq:mathcalGh-Gammaliminf}
\mathcal G^{t_i\Psi|_\Omega}_\infty(v_\infty)\le\liminf_{h\to\infty}\mathcal G^{t_i\Psi|_\Omega}_h(v_h)\quad\text{for all }i\in\mathbb N.
\end{align}
Since, by Proposition~\ref{prop:Riesz},
\[
\nabla v_h=\nabla^s u_h\quad\text{and}\quad\nabla v_\infty=\nabla^s u_\infty\quad\text{a.e.\ in $\R^n$},
\]
we can then rephrase~\eqref{eq:mathcalGh-Gammaliminf} as
\begin{align}
&\frac{1}{2}\int_\Omega A_\infty(x)(\nabla^s u_\infty(x)+t_i\Psi(x))\cdot(\nabla^s u_\infty(x)+t_i\Psi(x))\,{\rm d}x\nonumber\\
&\le\liminf_{h\to\infty}\frac{1}{2}\int_\Omega A_h(x)(\nabla^s u_h(x)+t_i\Psi(x))\cdot(\nabla^s u_h(x)+t_i\Psi(x))\,{\rm d}x.\label{eq:stima1}
\end{align}
In addition, by~\eqref{eq:unif-bound1} and~\eqref{eq:recovery}, the sequence $(u_h)_h$ is uniformly bounded in $H^s_0(\Omega)$ and, by the strong convergence of $(u_h)_h$ to $u$ in $L^2(\R^n)$, we have
\[
\nabla^s u_h+t_i\Psi\to\nabla^s u_\infty+t_i\Psi\quad\text{weakly in $L^2(\R^n;\R^n)$ as }h\to\infty.
\]
Then,
\begin{align}
&\frac{1}{2}\int_{\R^n\setminus\Omega} A_0(x)(\nabla^s u_\infty(x)+t_i\Psi(x))\cdot(\nabla^s u_\infty(x)+t_i\Psi(x))\,{\rm d}x\nonumber\\
&\le\liminf_{h\to\infty}\frac{1}{2}\int_{\R^n\setminus\Omega} A_0(x)(\nabla^s u_h(x)+t_i\Psi(x))\cdot(\nabla^s u_h(x)+t_i\Psi(x))\,{\rm d}x.\label{eq:stima2}
\end{align}
Therefore, by~\eqref{eq:stima1} and~\eqref{eq:stima2},
\begin{align*}
\mathcal F_\infty(\nabla^s u_\infty+t_i\Psi)&=\frac{1}{2}\int_\Omega A_\infty(x)(\nabla^s u_\infty(x)+t_i\Psi(x))\cdot(\nabla^s u_\infty(x)+t_i\Psi(x))\,{\rm d}x\\
&\quad+\frac{1}{2}\int_{\R^n\setminus\Omega} A_0(x)(\nabla^s u_\infty(x)+t_i\Psi(x))\cdot(\nabla^s u_\infty(x)+t_i\Psi(x))\,{\rm d}x\\
&\le\liminf_{h\to\infty}\frac{1}{2}\int_\Omega A_h(x)(\nabla^s u_h(x)+t_i\Psi(x))\cdot(\nabla^s u_h(x)+t_i\Psi(x))\,{\rm d}x\\
&\quad+\liminf_{h\to\infty}\frac{1}{2}\int_{\R^n\setminus \Omega} A_0(x)(\nabla^s u_h(x)+t_i\Psi(x))\cdot(\nabla^s u_h(x)+t_i\Psi(x))\,{\rm d}x\\
&\le\liminf_{h\to\infty}\mathcal F_h(\nabla^s u_h+t_i\Psi).
\end{align*}
Moreover, since by definition
\[
\mathcal F_h(\nabla^s u_h)=F_h(u_h)\quad\text{and}\quad\mathcal F_\infty(\nabla^s u_\infty)=F_\infty(u_\infty),
\]
then, by~\eqref{eq:recovery}, it holds that
\[
\frac{\mathcal F_\infty(\nabla^s u_\infty+t_i\Psi)-\mathcal F_\infty(\nabla^s u_\infty)}{t_i}\le\liminf_{h\to\infty}\frac{\mathcal F_h\left(\nabla^s u_h+t_i\Psi\right)-\mathcal F_h\left(\nabla^s u_h\right)}{t_i}\quad\text{for all $i\in\mathbb N$}.
\]
Hence, there exists an increasing sequence of integers $(h_i)_i\subset \mathbb N$ such that
\begin{align}
\frac{\mathcal F_\infty\left(\nabla^s u_\infty+t_i\Psi\right)-\mathcal F_\infty\left(\nabla^s u_\infty\right)}{t_i}-\frac{1}{i}\le\frac{\mathcal F_h\left(\nabla^s u_h+t_i\Psi\right)-\mathcal F_h\left(\nabla^s u_h\right)}{t_i}\quad\text{for all $h\ge h_i$}.\label{eq:convmom2}
\end{align}
If we set $\varepsilon_h\coloneqq t_i$ for $h_i\le h<h_{i+1}$ and $i\in\mathbb N$, then, by~\eqref{eq:convmom2}
\begin{align}
 &\liminf_{h\to\infty}\frac{\mathcal F_\infty\left(\nabla^s u_\infty+\varepsilon_h\Psi\right)-\mathcal F_\infty\left(\nabla^s u_\infty\right)}{\varepsilon_h}\le\liminf_{h\to\infty}\frac{\mathcal F_h\left(\nabla^s u_h+\varepsilon_h\Psi\right)-\mathcal F_h\left(\nabla^s u_h\right)}{\varepsilon_h}.\label{eq:mom1}
\end{align}
Note that the limit inferior on the left-hand side of~\eqref{eq:mom1} is actually achieved and coincides with the Fréchet derivative of the functional $\mathcal F_\infty$, i.e.\
\begin{equation}\label{eq:mom2}
 \mathcal F'_\infty(\nabla^s u_\infty)[\Psi]=\lim_{h\to\infty}\frac{\mathcal F_\infty\left(\nabla^s u_\infty+\varepsilon_h\Psi\right)-\mathcal F_\infty\left(\nabla^s u_\infty\right)}{\varepsilon_h}.
\end{equation}
For what concerns the right-hand side of~\eqref{eq:mom1}, we have
\begin{align}\label{eq:mom3}
 \frac{\mathcal F_h\left(\nabla^s u_h+\varepsilon_h\Psi\right)-\mathcal F_h\left(\nabla^s u_h\right)}{\varepsilon_h}=\mathcal F_h'(\nabla^s u_h)[\Psi]+\varepsilon_h\mathcal F_h(\Psi).
\end{align}

Since the last term on the right-hand side of~\eqref{eq:mom3} converges to $0$ as $h\to\infty$, from~\eqref{eq:mom1}--\eqref{eq:mom3} we get~\eqref{eq:thesismom}.
\end{proof}

We can finally prove Theorem~\ref{thm:equiv_Gamma-H}.

\begin{proof}[Proof of Theorem~\ref{thm:equiv_Gamma-H}]

 We need to prove that the $H$-convergence of $(A_h)_h$ to $A_\infty$ in $H^s_0(\Omega)$ implies the $\Gamma$-convergence of $(F_h)_h$ in the strong topology of $L^2(\R^n)$, and viceversa.

\smallskip

{\bf $H$-convergence implies $\Gamma$-convergence.} Since the $H$-convergence is stronger than the $G$-convergence, this part follows by Lemma~\ref{lem:equiv_Gamma-G}.

\smallskip

 {\bf $\Gamma$-convergence implies $H$-convergence.}
As in Lemma~\ref{lem:equiv_Gamma-G}, we first show that~\eqref{eq:Hscon1} and~\eqref{eq:Hscon2} hold for every $f\in L^2(\R^n)$. Then, by a density argument, we show that~\eqref{eq:Hscon1} and~\eqref{eq:Hscon2} are satisfied for all $f\in H^{-s}(\Omega)$, leading to the $H$-convergence of $(A_h)_h$ to $A_\infty$.
\smallskip

{\bf Step 1.} We fix $f\in L^2(\R^n)$. By Lemma~\ref{lem:equiv_Gamma-G}, the $\Gamma$-convergence of $(F_h)_h$ towards $F_\infty$ implies the weak convergence in $H^s_0(\Omega)$ of $u_h$, solutions of the problem $(P_h^f)$, towards $u_\infty$, solution of the limit problem $(P_\infty^f)$. 
Moreover, $u_h$ and $u_\infty$ are also, respectively, the unique minimisers of the nonlocal energies $F_h^f$ and $F_\infty^f$ defined in~\eqref{eq:Fhf}. Hence, by the Fundamental Theorem of $\Gamma$-convergence~\cite[Theorem~7.8]{DalMaso}, $F_h^f(u_h)\to F_\infty^f(u_\infty)$ as $h\to\infty$, which implies that
\[
F_h(u_h)\to F_\infty(u_\infty)\quad\text{as $h\to\infty$}.
\]
Therefore, in virtue of Proposition~\ref{prop:convmom}, it holds that
\[
A_h\nabla^s u_h\to A_\infty \nabla^s u_\infty\quad\text{weakly in $L^2(\R^n;\R^n)$ as $h\to\infty$},
\]
and~\eqref{eq:Hscon1}--\eqref{eq:Hscon2} are satisfied for all $f\in L^2(\R^n)$.
\smallskip

{\bf Step 2.} We fix now $f\in H^{-s}(\Omega)$.
For every $h\in\mathbb N$, let $u_h,u_\infty\in H^s_0(\Omega)$ be the solutions of problems $(P_h^f)$ and $(P^f_\infty)$, respectively.
By Lemma~\ref{lem:equiv_Gamma-G}, we already know the validity of the convergence of the solutions~\eqref{eq:Hscon1}, and it remains to prove the convergence of the momenta~\eqref{eq:Hscon2}.

Since the embedding $L^2(\R^n)\subset H^{-s}(\Omega)$ is continuous and dense, there exists a sequence $(f_j)_j\subset L^2(\R^n)$ such that as $j\to\infty$
\[
\text{$(f_j)_j$ strongly converges to $f$ in $H^{-s}(\Omega)$.}
\]
For all $j\in\mathbb N$, let $u_h^j,u_\infty^j\in H^s_0(\Omega)$ be, respectively, the solutions of the problems $(P_h^{f_j})$ and $(P_\infty^{f_j})$.
Fixed $\Phi\in L^2(\R^n;\R^n)$, by Proposition~\ref{prop:Poincare} and Lemma~\ref{lem:LaxMilgram}, we get
\begin{align*}
&\left|\int_\Omega (A_h(x)\nabla^su_h(x)-A_\infty(x)\nabla^s u_\infty(x))\cdot \Phi(x)\,{\rm d}x\right|\\
&\le \left|\int_\Omega (A_h(x)\nabla^su_h^j(x)-A_\infty(x)\nabla^s u_\infty^j(x))\cdot \Phi(x)\,{\rm d}x\right|+C\|\Phi\|_{L^2(\R^n;\R^n)}\|f_j-f\|_{H^{-s}(\Omega)},
\end{align*}
and by Step 1, by sending first $h\to\infty$ and then $j\to\infty$, we obtain~\eqref{eq:Hscon2}, leading to the $H$-convergence of the sequence $(A_h)_h$ to $A_\infty$ in $H^s_0(\Omega)$.
\end{proof}

 We conclude this section providing an alternative proof of Theorem~\ref{thm:sym-compact}, purely based on the variational techniques introduced in Theorem~\ref{thm:Gammacompactness} and Theorem~\ref{thm:equiv_Gamma-H}.

\begin{proof}[Proof of Theorem~\ref{thm:sym-compact}]
Let $(A_h)_h\subset\mathcal M^{\rm sym}(\lambda,\Lambda,\Omega,A_0)$ and let $(F_h)_h$ be the associated nonlocal energies, as in~\eqref{eq:Fh}.
By Theorem~\ref{thm:Gammacompactness}, there exists a not relabeled subsequence of $(A_h)_h$ and $A_\infty\in \mathcal M^{\rm sym}(\lambda,\Lambda,\Omega,A_0)$ such that, denoting by $F_\infty$ the nonlocal energy associated with $A_\infty$, as in~\eqref{eq:Finfty}, then
\[
\text{$(F_h)_h$ $\Gamma$-converges to $F_\infty$ strongly in $L^2(\R^n)$}. 
\]
 We can therefore conclude that
\[
\text{$(A_h)_h$ $H$-converges to $A_\infty$ in $H^s_0(\Omega)$},
\]
in virtue of Theorem~\ref{thm:equiv_Gamma-H}. 
\end{proof}


\section{Conclusions and open problems}\label{sec:conclusions}

Through our distributional approach, the $H$-convergence theory extends to linear operators in fractional divergence form.
In what follows, we list some possible future research directions stemming from our results that we believe may be of particular interest to the community.
\begin{enumerate}
 \item 
 A key tool useful to characterise the $H$-limit for sequence of \emph{monotone operators} is the Div-Curl Lemma~\cite{MuratI,MuratII}. 
 It is worth investigating the validity of an analogous of this result in the fractional setting.
 \item In Section~\ref{sec:H-Gamma-equiv}, we proved that the $H$-compactness in the symmetric case is equivalent to the $\Gamma$-compactness of the associated energies. In~\cite{ADMZ2}, the authors show that an analogous result can be obtained also in the case of not necessarily symmetric matrices for which, a priori, there is no natural energy associated with the problem. We conjecture that the techniques used in the aforementioned paper can be adapted in the fractional scenario.
 \item Once the $H$-convergence for elliptic operators has been characterised, it is natural to study the asymptotic behaviour of sequences of parabolic nonlocal operators of the form 
 \[
 \partial_t-\div^s(B_h(x)\nabla^s).
 \]
 In~\cite{MPV}, the authors show that, whenever the sequence of matrix-valued functions $(A_h)_h$ is independent of time, then the parabolic $H$-limit $B_\infty(x,t)$ coincides with the elliptic $H$-limit $A_\infty(x)$, meaning that $B_\infty$ is constant in time.
 Again, the authors conjecture that a similar discussion can be extended to the nonlocal scenario.
 \item The notion of $H$-convergence has been recently extended to sub-Riemannian structures and, more generally, to operators defined via families of vector fields, see e.g.~\cite{Maione,MPV,MPV2,MPSC,MPSC2}. 
 However, in such contexts the definition and analysis of nonlocal operators is still in its early stages. We believe that a natural continuation of this line of research is to investigate the compactness properties of fractional-order operators in specific sub-Riemannian geometries, such as Carnot groups, which are a well-structured setting for nonlocal analysis.
\end{enumerate}


\section*{Declarations}

\noindent{\it \textbf{Acknowledgements.}} The authors warmly thank G.~C.~Brusca, G.~Dal Maso, C.~Kreisbeck, F.~Paronetto, F.~Solombrino, E.~Valdinoci, and the anonymous referee for their careful reading, for useful suggestions and discussions on this topic.

\smallskip

\noindent{\it \textbf{Conflict of interest.}} On behalf of all authors, the corresponding author states that there is no conflict of interest.

\smallskip

\noindent{\it \textbf{Funding information.}} The authors are member of GNAMPA of the Istituto Nazionale di Alta Matematica (INdAM).

M.C.\ has been founded by the European Union - NextGenerationEU under the Italian Ministry of University and Research (MUR) National National Centre for HPC, Big Data and Quantum Computing (CN\_00000013 – CUP: E13C22001000006). M.C.\ acknowledges also the support of the MUR - PRIN 2022 project ``Variational Analysis of Complex Systems in Materials Science, Physics and Biology'' (N.~2022HKBF5C), funded by European Union NextGenerationEU, and of the INdAM - GNAMPA 2025 Project ``DISCOVERIES - Difetti e Interfacce in Sistemi Continui: un’Ottica Variazionale in Elasticità con Risultati Innovativi ed Efficaci Sviluppi'' (Grant Code: CUP\_E5324001950001). 

A.C.\ acknowledges the support of the MUR - PRIN 2022 project ``Elliptic and parabolic problems, heat kernel estimates and spectral theory'' (N.~20223L2NWK), of the INdAM - GNAMPA 2024 Project ``Ottimizzazione e disuguaglianze funzionali per problemi geometrico-spettrali locali e nonlocali'' (Grant Code: CUP\_E53C23001670001).

A.M.\ is supported by the Spanish State Research Agency, through the Severo Ochoa and María de Maeztu Program for Centers and Units of Excellence in R\&D (CEX2020-001084-M), by MCIN/AEI/10.13039/501100011033 (PID2021-123903NB-I00), by Generalitat de Catalunya (2021-SGR-00087). 

M.C.\ and A.M.\ acknowledge the support of the INdAM - GNAMPA 2024 Project ``Pairing and div-curl lemma: extensions to weakly differentiable vector fields and nonlocal differentiation'' (Grant Code: CUP\_E53C23001670001).

A.C.\ and A.M.\ acknowledge the support of the INdAM - GNAMPA 2025 Project ``Metodi variazionali per problemi dipendenti da operatori frazionari isotropi e anisotropi'' (Grant Code: CUP\_E5324001950001).

This research was supported by the Centre de Recerca Matemàtica of Barcelona (CRM), under the International Programme for Research in Groups (IP4RG).

\smallskip

\noindent{\it \textbf{Data availability statement.}} Data sharing not applicable to this article as no datasets were generated or analysed during the current study.



\end{document}